\numberwithin{equation}{section}
\newtheorem{thm}{Theorem}[section]
\newtheorem{lmm}[thm]{Lemma}
\newtheorem{cor}[thm]{Corollary}
\theoremstyle{definition}
\newtheorem{defi}{Definition}[section]
\newtheorem{rem}[defi]{Remark}
\newtheorem{ex}{Example}[section]
\renewcommand{\theequation}{\arabic{section}.\arabic{equation}}
\newcommand{\nn}{\nonumber}
\begin{document}

\title{Entropy dissipative higher order accurate positivity preserving time-implicit discretizations for nonlinear degenerate parabolic equations}%
\author{Fengna Yan$^{1,2}$,\ J. J. W. Van der Vegt$^{2}$,\ Yinhua Xia$^{3}$,\ Yan Xu$^{3}$}
\date{ }
\maketitle
 {\let\thefootnote\relax\footnotetext{\, Email address: fnyan@hfut.edu.cn (F. Yan), j.j.w.vandervegt@utwente.nl (J. J. W. Van der Vegt),

 yhxia@ustc.edu.cn (Y. Xia), yxu@ustc.edu.cn (Y. Xu).}}
\footnotetext[1]{\, School of Mathematics, Hefei University of Technology, Hefei, Anhui, 230000, PR China.}

\footnotetext[2]{\, Department of Applied Mathematics, Mathematics of Computational Science Group, University

of Twente, Enschede, 7500 AE, The Netherlands.}

\footnotetext[3]{\, School of Mathematics, University of Science and Technology of China, Hefei, Anhui, 230026, PR

China.}


\begin{abstract}
We develop entropy dissipative higher order accurate local discontinuous Galerkin (LDG) discretizations coupled with
Diagonally Implicit Runge-Kutta (DIRK) methods for nonlinear degenerate parabolic equations with a gradient flow structure. Using the simple alternating numerical flux, we construct DIRK-LDG discretizations that combine the advantages of higher order accuracy, entropy dissipation and proper long-time behavior. The implicit time-discrete methods greatly alleviate the time-step restrictions needed for the stability of the numerical discretizations. Also, the larger time step significantly improves computational efficiency. We theoretically prove the unconditional entropy dissipation of the implicit Euler-LDG discretization. Next, in order to ensure the positivity of the numerical solution, we use the Karush-Kuhn-Tucker (KKT) limiter, which couples the positivity inequality constraint with higher order accurate DIRK-LDG discretizations using Lagrange multipliers. In addition,  mass conservation of the positivity-limited solution is ensured by imposing a mass conservation equality constraint to the KKT equations. The unique solvability and unconditional entropy dissipation for an implicit first order accurate in time, but higher order accurate in space, KKT-LDG discretizations are proved, which provides a first theoretical analysis of the KKT limiter. Finally, numerical results demonstrate the higher order accuracy and entropy dissipation of the KKT-DIRK-LDG discretizations for problems requiring a positivity limiter.
\end{abstract}

\textbf{Keywords:} Local discontinuous Galerkin discretizations, DIRK methods, Nonlinear degenerate parabolic equations, Unconditional entropy dissipation, KKT limiter.

\section{Introduction}

Consider the following degenerate parabolic equation \cite{bessemoulin2012finite}
\begin{align}\label{1.a}
\begin{cases}
u_t=\nabla\cdot(f(u)\nabla(\phi(\pmb{x})+H'(u))), &{\rm{ in }}\ \Omega\times(0,T],\\
u(\pmb{x},0)=u_0(\pmb{x}), &\rm{ in }\ \Omega,
\end{cases}
\end{align}
with zero-flux boundary condition
\begin{align}\label{1.abound}
\nabla(\phi(\pmb{x})+H'(u))\cdot \pmb{\nu}=0,\quad {\rm{ on }}\ \partial\Omega\times(0,T],
\end{align}
where $\Omega$ is an open bounded domain in $\mathbb{R}^d, d=1,2$, with unit outward normal vector $\pmb{\nu}$ at the boundary $\partial\Omega$, $u(\pmb{x}, t)\geqslant 0$ is a nonnegative density with time derivative denoted as $u_t$, $\phi(\pmb{x})$ is a given potential function for $\pmb{x}\in\mathbb{R}^d$, $f, H$ are given functions such that
\begin{align}\label{1.fh}
f:  \mathbb{R}^+ \xrightarrow {} \mathbb{R}^+,\quad H:  \mathbb{R}^+ \xrightarrow {} \mathbb{R},\quad f(u)H''(u)\geqslant 0,
\end{align}
where $\mathbb{R}^+$ is the nonnegative real space. Here $f(u)H''(u)$ can vanish for certain values of $u$, resulting in degenerate cases. The entropy corresponding to (\ref{1.a}) is defined by
\begin{align}\label{1.b}
E(u)=\int_\Omega (u\phi(\pmb{x})+H(u))d\Omega.
\end{align}
Multiplying (\ref{1.a}) with $\phi(\pmb{x})+H'(u)$ and integrating over $\Omega$, with the zero-flux boundary condition (\ref{1.abound}), together with (\ref{1.b}), we obtain that the time derivative of the entropy satisfies
\begin{align}\label{1.c}
\frac d{dt}E(u)=-\int_\Omega f(u)|\nabla(\phi(\pmb{x})+H'(u))|^2d\Omega\leqslant 0.
\end{align}
System (\ref{1.a}) can represent different physical problems, such as the porous media equation \cite{vazquez2007porous, zhang2009numerical}, the nonlinear nonlocal equation with a double-well potential \cite{carrillo2015finite}, the nonlinear Fokker-Plank model for fermion and boson gases \cite{abdallah2011minimization, carrillo2009fermi, toscani2012finite}.

Recently, many numerical discretizations have been proposed for (\ref{1.a}); e.g. mixed finite element methods \cite{burger2009mixed}, finite volume methods \cite{bessemoulin2012finite, carrillo2015finite}, DG methods \cite{liu2016entropy, liu2014maximum, liu2015entropy} and LDG methods \cite{zhang2009numerical}. Regarding positivity preserving discretizations, Liu and Yu developed in \cite{liu2014maximum, liu2015entropy}, respectively, for the linear Fokker-Plank equation a maximum preserving DG scheme and an entropy satisfying DG scheme, but these discretizations can not be directly applied to the general case given by (\ref{1.a}). Liu and Wang subsequently developed in \cite{liu2016entropy} an explicit Runge-Kutta (RK) time-discrete method for (\ref{1.a}) in one dimension together with a positivity preserving high order accurate DG scheme under some Courant-Friedrichs-Lewy (CFL) constraints. For the porous media equation, an LDG discretization coupled with an explicit RK method was considered in \cite{zhang2009numerical}, which is similar to the DG method in  \cite{liu2016entropy}. Still, it uses a special numerical flux to ensure the non-negativity of the numerical solution. Cheng and Shen in \cite{cheng2022} propose a Lagrange multiplier approach to construct positivity preserving schemes for a class of parabolic equations, which is different from (\ref{1.a}), but contains the porous media equation.

For the time-step $\tau$ and mesh size $h$, the condition $\tau=O(h^2)$ is needed for stability in \cite{liu2016entropy} and \cite{zhang2009numerical}. Therefore, these explicit time discretizations suffer from severe time step restrictions, but there are currently no feasible positivity preserving time-implicit LDG discretizations for (\ref{1.a}). In this paper, we present  higher order accurate Diagonally Implicit Runge-Kutta (DIRK) LDG discretizations, which ensure positivity and mass conservation of the numerical solution without the severe time step restrictions of explicit methods.

The LDG method proposed by Cockburn and Shu in \cite{cockburn1998local} has many advantages, including high parallelizability, high order accuracy, a simple choice of trial and test spaces and easy handling of complicated geometries. We refer to \cite{cockburn2001superconvergence, guo2018high, tian2016h, zhou2018stability} for examples of applications of the LDG method.

For many physical problems, it is crucial that the numerical discretization preserves the positivity properties of the partial differential equations (PDEs). Not only is this necessary to obtain physically meaningful solutions, but also negative values may result in ill-posedness of the problem and divergence of the numerical discretization. Positivity preserving DG methods have been extensively studied by many mathematicians. However, most positivity preserving DG methods are combined with explicit time-discretizations \cite{liu2016entropy, yang2013discontinuous, zhang2010maximum, zhang2010positivity}, for which numerical stability frequently imposes severe time step restrictions. These severe time-step constraints make explicit methods impractical for parabolic PDEs, such as (\ref{1.a}).

Recently, Qin and Shu extended in \cite{qin2018implicit}  the general framework for establishing positivity-preserving schemes, proposed in \cite{zhang2010maximum, zhang2010positivity}, from explicit to implicit time discretizations. They developed for one-dimensional conservation laws a positivity preserving DG method with high-order spatial accuracy combined with the first-order backward Euler implicit temporal discretization. This approach requires, however, a detailed analysis of the numerical discretization to ensure positivity and it is not straightforward to extend this approach to higher order accurate time-implicit methods.  Huang and Shen in \cite{huang2021} constructed higher order linear bound preserving implicit discretizations for the Keller-Segel and Poisson-Nernst-Planck equations. Van der Vegt, Xia and Xu proposed in \cite{van2019positivity} the KKT limiter concept to construct positivity preserving time-implicit discretizations. The KKT limiter in \cite{van2019positivity} is obtained by coupling the inequality and equality constraints imposed by the physical problem with higher order accurate DIRK-DG discretizations using Lagrange multipliers. The resulting semi-smooth nonlinear equations are solved by an efficient active set semi-smooth Newton method.

In this paper, we consider a general class of nonlinear degenerate parabolic equations given by (\ref{1.a}) and aim at developing higher order accurate entropy dissipative and positivity preserving time-implicit LDG discretizations. For the spatial discretization, we use an LDG method with simple alternating numerical fluxes, which results in entropy dissipation of the semi-discrete LDG discretization. For the temporal discretization, we consider DIRK methods, which significantly enlarge the time step for stability. The unconditional entropy dissipation of the LDG discretization combined with an implicit Euler time integration method is proved theoretically. We construct positivity preserving discretizations using the KKT limiter by imposing the positivity constraint on the numerical discretization using Lagrange multipliers. The unique solvability of the resulting positivity preserving KKT system is proved. We will also prove the unconditional entropy dissipation of the positivity preserving LDG discretization when it is combined with the backward Euler time integration method. Numerical results demonstrate the accuracy and entropy dissipation of the higher order accurate positivity preserving DIRK-LDG discretizations.

This paper is organized as follows.
In Section \ref{semi}, we present the semi-discrete LDG discretization with simple alternating numerical fluxes for the nonlinear degenerate parabolic equation stated in (\ref{1.a}) and prove that the numerical approximation is entropy dissipative. Higher order accurate DIRK-LDG discretizations, which enlarge the stable time step to a great extent, are discussed in Section \ref{fully}. The unconditional entropy dissipation of the implicit Euler LDG discretizations is proved in Section \ref{Backward Euler}. In order to ensure positivity of the numerical solution and mass conservation of the positivity limited numerical discretizations, we introduce in Section \ref{KKT} the KKT system. The higher order DIRK-LDG discretizations with positivity and mass conservation constraints are formulated in Section \ref{KKT numerical} as a KKT mixed complementarity problem.  The unique solvability and unconditional entropy dissipation of the resulting algebraic system are proved in Section \ref{well-posedness and stability for KKT system1}. In Section \ref{Numerical}, numerical results are provided to demonstrate the higher order accuracy, positivity and entropy dissipation of the positivity preserving KKT-DIRK-LDG discretizations. Concluding remarks are given in Section \ref{sec:conclusions}.


\section{Semi-discrete LDG schemes}\label{semi}

 \subsection{Definitions, Notations}

Let $\mathcal{T}_{h}$ be a shape-regular tessellation of $\Omega\subset \mathbb{R}^d$, $d=1,2$, with line or convex quadrilateral elements $K$. Given the reference element $\widehat{K}=[-1,1]^{d}$. Let ${\mathcal{Q}_{k}(\widehat{K})}$ denote the space composed of the tensor product of Legendre polynomials $\mathcal{P}_{k}(\widehat{K})$ on $[-1,1]$ of degree at most $\displaystyle k\geqslant 0$. The space ${\mathcal{Q}_{k}(K)}$ is obtained by using an isoparametric transformation from element $K$ to the reference element $\widehat{K}$. The finite element spaces $V_{h}^k$ and $\pmb{W}_{h}^k$ are defined by
\begin{align*}
&{V_{h}^k}=\{v\in L^2(\Omega): \  v|_K \in {\mathcal{Q}_{k}(K)}, \ \forall K\in \mathcal{T}_{h}\},\\
&\pmb{W}_{h}^k=\{\pmb{w}\in [L^2(\Omega)]^d: \  \pmb{w}|_K \in [{\mathcal{Q}_{k}(K)}]^{d}, \  \forall K\in \mathcal{T}_{h}\},
\end{align*}
and are allowed to have discontinuities across element interfaces. Let $e$ be an interior edge connected to the ``left" and ``right'' elements denoted, respectively, by $K_L$ and $K_R$. If $u$ is a function on $K_L$ and $K_R$, we set $u^L:=\left(u|_{K_L}\right)|_e$ and $u^R:=(u|_{K_R})|_e$ for the left and right trace of $u$ at $e$.

Note that $L^{1}(\Omega)$, $L^{2}(\Omega)$ and $L^{\infty}(\Omega)$ are standard Sobolev spaces, $\|u\|_{L^{2}(\Omega)}$ is the $L^{2}(\Omega)$-norm and $(\cdot, \cdot)_\Omega$ is the $L^{2}(\Omega)$ inner product.
 For simplicity, we denote the inner product as $(u, v):=(u, v)_{\Omega}$.

\subsection{LDG discretization in space}

For the LDG discretization of (\ref{1.a}), we first rewrite this equation as a first order system
\begin{align*}
u_t=&\nabla\cdot\pmb{q},\nn \\
\pmb{q}=&f(u)\pmb{s},\nn\\
\pmb{s}=&\nabla p,\nn\\
\displaystyle p=&\phi(\pmb{x})+H'(u).
\end{align*}
Then the LDG discretization can be readily obtained by multiplying the above equations with arbitrary test functions, integrating by parts over each element $K\in \mathcal{T}_{h}$, and finally a summation of element and face contributions. The LDG discretization can be stated as: find $u_{h}, p_{h}\in V_h^k$, $\pmb{q}_h,\pmb{s}_h\in \pmb{W}_h^k$, such that for all $\rho, \varphi \in V_h^k$ and $\pmb{\theta}, \pmb{\eta} \in \pmb{W}_h^k$, we have
\begin{subequations}\label{2.bb}
\begin{alignat}{2}\label{2.b}
(u_{ht}, \rho)+L_h^1(\pmb{q}_h; \rho)=0, \\
\label{2.b1}
(\pmb{q}_h, \pmb{\theta})+L_h^2(u_h, \pmb{s}_h; \pmb{\theta})=0,\\
\label{2.b2}
(\pmb{s}_h, \pmb{\eta})+L_h^3(p_h; \pmb{\eta})=0,\\
\label{2.b3}
(p_h, \varphi)+L_h^4(u_h; \varphi)=0,
\end{alignat}
\end{subequations}
where
\begin{subequations}\label{L1}
\begin{alignat}{2}\label{L11}
L_h^1(\pmb{q}_h; \rho):=&(\pmb{q}_h, \nabla\rho)-\sum_{K\in \mathcal{T}_{h}}(\widehat{\pmb{q}}_{h}\cdot\pmb{\nu}, \rho)_{\partial K},\\
\label{L12}
L_h^2(u_h, \pmb{s}_h; \pmb{\theta}):=&-(f(u_h)\pmb{s}_h, \pmb{\theta}),\\
\label{L13}
L_h^3(p_h; \pmb{\eta}):=&(p_h, \nabla\cdot\pmb{\eta})-\sum_{K\in \mathcal{T}_{h}}(\widehat{p}_h, \pmb{\nu}\cdot\pmb{\eta})_{\partial K},\\
\label{L14}
L_h^4(u_h; \varphi):=&-\left(\phi(\pmb{x})+H'(u_h), \varphi\right).
\end{alignat}
\end{subequations}
Note that $\pmb{\nu}$ is the unit outward normal vector of an element $K$ at its boundary $\partial K$. The ``hat" terms in $L_h^1$ and $L_h^3$ are the so-called ``numerical fluxes", whose choices play an important role in ensuring stability. We remark that the choices for the numerical fluxes are not unique. Here we use the alternating numerical fluxes
\begin{align}\label{2.a}
\widehat{\pmb{q}}_{h}=&\pmb{q}_h^R, \quad \widehat{p}_h=p_h^L,
\end{align}
or
\begin{align}\label{2.aa}
\widehat{\pmb{q}}_{h}=&\pmb{q}_h^L, \quad \widehat{p}_h=p_h^R.
\end{align}
Considering the zero-flux boundary condition $\nabla(\phi(\pmb{x})+H'(u))\cdot \pmb{\nu}=0$, we take
\begin{align}\label{2.a1}
\widehat{\pmb{q}}_{h}\cdot \pmb{\nu}=0, \quad p_h=(p_h)^{in}
\end{align}
at $\partial \Omega$, where ``in" refers to the value obtained by taking the boundary trace from the inside of the domain $\Omega$.

\subsection{Entropy dissipation}

\begin{thm}
\label{th: Stability1} For $u_{h}\in V_h^k$,  $\pmb{s}_h\in \pmb{W}_h^k$, the LDG scheme (\ref{2.bb})-(\ref{2.a1}) with $f$ satisfying (\ref{1.fh}) is entropy dissipative and satisfies
\begin{align*}
\frac{d}{dt}E(u_h)=-(f(u_h)\pmb{s}_h,\pmb{s}_h)\leqslant 0,
\end{align*}
which is consistent with the entropy dissipation property (\ref{1.c}) of the PDE (\ref{1.a}).
\end{thm}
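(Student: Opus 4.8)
The plan is to mimic at the discrete level the continuous computation that led to (\ref{1.c}): there, one multiplies the PDE by $\phi(\px)+H'(u)$, the entropy variable, and integrates. Here the discrete analogue of that entropy variable is $p_h$, so the first step is to differentiate the discrete entropy and feed $p_h$ (and its companions $\pmb q_h$, $\pmb s_h$) into the four LDG equations (\ref{2.bb}) as test functions.

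First I would compute
\begin{align*}
\frac{d}{dt}E(u_h)=\left(\phi(\px)+H'(u_h),u_{ht}\right),
\end{align*}
which follows by differentiating (\ref{1.b}) with $u_h\in V_h^k$ in the role of $u$ and using the chain rule; note $\phi(\px)+H'(u_h)$ need not lie in $V_h^k$, but since $u_{ht}\in V_h^k$ we may replace $\phi(\px)+H'(u_h)$ by its $L^2$-projection, which by (\ref{2.b3}) and (\ref{L14}) is exactly $-p_h$ tested against elements of $V_h^k$. Concretely, taking $\varphi=u_{ht}$ in (\ref{2.b3})--(\ref{L14}) gives $(p_h,u_{ht})=\left(\phi(\px)+H'(u_h),u_{ht}\right)$, hence $\frac{d}{dt}E(u_h)=(p_h,u_{ht})$. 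Next, take $\rho=p_h$ in (\ref{2.b})--(\ref{L11}): $(u_{ht},p_h)=-L_h^1(\pmb q_h;p_h)=-(\pmb q_h,\nabla p_h)+\sum_K(\widehat{\pmb q}_h\cdot\pmb\nu,p_h)_{\partial K}$.

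The middle of the argument is the telescoping of the interface terms. I would take $\pmb\eta=\pmb q_h$ in (\ref{2.b2})--(\ref{L13}) to get $(\pmb s_h,\pmb q_h)=-L_h^3(p_h;\pmb q_h)=-(p_h,\nabla\cdot\pmb q_h)+\sum_K(\widehat p_h,\pmb\nu\cdot\pmb q_h)_{\partial K}$, and integrate $(\pmb q_h,\nabla p_h)$ by parts elementwise to relate it to $(p_h,\nabla\cdot\pmb q_h)$ plus boundary terms. Combining, the volume terms cancel and one is left with a sum over all edges of $(\widehat{\pmb q}_h\cdot\pmb\nu)\,p_h-(\widehat p_h)\,\pmb q_h\cdot\pmb\nu$ contributions from the two adjacent elements; with the alternating choice (\ref{2.a}) or (\ref{2.aa}) — $\widehat{\pmb q}_h$ taken from one side and $\widehat p_h$ from the other — these interface contributions cancel in pairs, and the domain-boundary contributions vanish by (\ref{2.a1}) (either $\widehat{\pmb q}_h\cdot\pmb\nu=0$ or the $\widehat p_h$ term is handled by the boundary prescription). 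This yields $(u_{ht},p_h)=-(\pmb q_h,\pmb s_h)$... I will in fact organize it so that the chain reduces to $(\pmb s_h,\pmb q_h)$. Finally, take $\pmb\theta=\pmb s_h$ in (\ref{2.b1})--(\ref{L12}): $(\pmb q_h,\pmb s_h)=-L_h^2(u_h,\pmb s_h;\pmb s_h)=(f(u_h)\pmb s_h,\pmb s_h)$. Chaining everything gives $\frac{d}{dt}E(u_h)=-(f(u_h)\pmb s_h,\pmb s_h)$, and nonnegativity of this quantity follows because $f(u_h)\geqslant0$ by (\ref{1.fh}) (the pointwise sign of $f$, not the product $fH''$, is what enters here, since the $H''$ factor has already been absorbed through the $p_h$ bookkeeping).

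The main obstacle is the careful handling of the boundary and interface flux terms: one must verify that the alternating pairing in (\ref{2.a})/(\ref{2.aa}) makes every interior-edge contribution telescope to zero and that the zero-flux prescription (\ref{2.a1}) kills the $\partial\Omega$ terms, so that nothing but $-(f(u_h)\pmb s_h,\pmb s_h)$ survives. A secondary subtlety worth stating explicitly is the first step — that $\frac{d}{dt}E(u_h)$ equals $(p_h,u_{ht})$ rather than $\left(\phi+H'(u_h),u_{ht}\right)$ only after invoking (\ref{2.b3}); this is precisely why $p_h$ is introduced as a separate LDG variable rather than being substituted away, and it is the discrete mechanism that replaces "multiply by $\phi+H'(u)$" in the continuous proof.
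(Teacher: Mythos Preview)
Your proposal is correct and follows essentially the same route as the paper: the paper chooses the test functions $\rho=p_h$, $\pmb\theta=-\pmb s_h$, $\pmb\eta=\pmb q_h$, $\varphi=-u_{ht}$ (your choices differ only by harmless signs), adds the four equations, integrates by parts once, and verifies the same interior-edge cancellation under the alternating fluxes together with the boundary condition (\ref{2.a1}) to arrive at $(\phi(\px)+H'(u_h),u_{ht})=-(f(u_h)\pmb s_h,\pmb s_h)$. Your explicit remark that (\ref{2.b3}) with $\varphi=u_{ht}$ is what converts $(\phi+H'(u_h),u_{ht})$ into $(p_h,u_{ht})$ is a useful clarification the paper leaves implicit.
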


\begin{proof}

By taking
\begin{align*}
\rho=p_h,\quad \pmb{\theta}=-\pmb{s}_h,\quad \pmb{\eta}=\pmb{q}_h,\quad \varphi=-u_{ht},
\end{align*}
in  (\ref{2.b})-(\ref{2.b3}), respectively, and after integration by parts, we have
\begin{align}\label{2.3a}
&(\phi(\pmb{x})+H'(u_h), u_{ht})\nn\\
=&-(f(u_h)\pmb{s}_h,\pmb{s}_h)-(\pmb{q}_h, \nabla p_h)+\sum_{K\in \mathcal{T}_{h}}(\widehat{\pmb{q}}_{h}\cdot\pmb{\nu}, p_h)_{\partial K}-(p_h, \nabla\cdot\pmb{q}_h)+\sum_{K\in \mathcal{T}_{h}}(\widehat{p}_h, \pmb{\nu}\cdot\pmb{q}_h)_{\partial K}\nn\\
=&-(f(u_h)\pmb{s}_h,\pmb{s}_h)-\sum_{K\in \mathcal{T}_{h}}(\pmb{q}_{h}\cdot\pmb{\nu}, p_h)_{\partial K}+\sum_{K\in \mathcal{T}_{h}}(\widehat{\pmb{q}}_{h}\cdot\pmb{\nu}, p_h)_{\partial K}+\sum_{K\in \mathcal{T}_{h}}(\widehat{p}_h, \pmb{\nu}\cdot\pmb{q}_h)_{\partial K}.
\end{align}
Assume that $e$ is an interior edge shared by elements $K_L$ and $K_R$, then $\pmb{\nu}^R=-\pmb{\nu}^L$, and together with the numerical fluxes (\ref{2.a}), we obtain
\begin{align}\label{2.3b}
&-\sum_{K_L\bigcup K_R}(\pmb{q}_{h}\cdot\pmb{\nu}, p_h)_{e}+\sum_{K_L\bigcup K_R}(\widehat{\pmb{q}}_{h}\cdot\pmb{\nu}, p_h)_{e}
+\sum_{K_L\bigcup K_R}(\widehat{p}_h, \pmb{\nu}\cdot\pmb{q}_h)_{e}\nn\\
=&-(\pmb{q}_{h}^L\cdot\pmb{\nu}^L, p_h^L)_{e}+(\pmb{q}_{h}^R\cdot\pmb{\nu}^L, p_h^R)_{e}+(\pmb{q}_{h}^R\cdot\pmb{\nu}^L, p_h^L)_{e}-(\pmb{q}_{h}^R\cdot\pmb{\nu}^L, p_h^R)_{e}\nn\\
&+(\pmb{q}_{h}^L\cdot\pmb{\nu}^L, p_h^L)_{e}-(\pmb{q}_{h}^R\cdot\pmb{\nu}^L, p_h^L)_{e}=0.
\end{align}
Combining (\ref{2.3a})-(\ref{2.3b}), using (\ref{1.b}), boundary conditions (\ref{2.a1}) and the condition on $f$ (\ref{1.fh}), we get
\begin{align*}
\frac{d}{dt}E(u_h)=(\phi(\pmb{x})+H'(u_h), u_{ht})=-(f(u_h)\pmb{s}_h,\pmb{s}_h)\leqslant 0.
\end{align*}
\end{proof}

\begin{rem}
For brevity, we will only consider in the remaining article the numerical fluxes (\ref{2.a}) and omit the discussion of the numerical fluxes (\ref{2.aa}), but all results also apply to the numerical fluxes (\ref{2.aa}).
\end{rem}

\begin{rem}
Compared to the spatial discretizations in \cite{liu2016entropy, zhang2009numerical}, we choose the simpler alternating numerical fluxes (\ref{2.a}) and (\ref{2.aa}), which significantly simplifies the theoretical analysis of the entropy dissipation property of the LDG discretization.
\end{rem}

\section{Time-implicit LDG schemes}\label{fully}

The numerical discretization of the nonlinear parabolic equations (\ref{1.a}) using explicit time discretization methods suffers from the rather severe time-step constraint $\tau=O(h^2)$. In this section, we will discuss implicit time discretizations coupled with positivity constraints in Section \ref{Positivity}.

We divide the time interval $\displaystyle[0,T]$ into $N$ parts $0=t_0<t_1<...<t_N=T$, with $\tau^n=t_n-t_{n-1}\ (n=1,2,\ldots,N)$. For $n=0,1,\ldots,N$, let $u_n=u(\cdot, t_n)$ and $u_h^n$, respectively, denote the exact and approximate values of $u$ at time $t_n$.

\subsection{Backward Euler LDG discretization}\label{Backward Euler}
Discretizing (\ref{2.bb}) in time with the implicit Euler method gives the following discrete system
\begin{subequations}\label{3.aa}
\begin{alignat}{2}\label{3.a}
\left(\frac{u_h^{n+1}-u_h^{n}}{\tau^{n+1} }, \rho\right)+L_h^1(\pmb{q}_h^{n+1}; \rho)=0, \\
\label{3.a1}
(\pmb{q}_h^{n+1}, \pmb{\theta})+L_h^2(u_h^{n+1}, \pmb{s}_h^{n+1}; \pmb{\theta})=0,\\
\label{3.a2}
(\pmb{s}_h^{n+1}, \pmb{\eta})+L_h^3(p_h^{n+1}; \pmb{\eta})=0,\\
\label{3.a3}
(p_h^{n+1}, \varphi)+L_h^4(u_h^{n+1}; \varphi)=0.
\end{alignat}
\end{subequations}
Define the discrete entropy as
 \begin{align}\label{Eh}
E_h(u_h^{n})=\int_\Omega (u_h^{n}\phi(\pmb{x})+H(u_h^{n}))dx.
\end{align}
We have the following relation for the discrete entropy dissipation.

\begin{thm}
\label{th: Stability2}
For all time levels $n$, the numerical solutions $u_h^{n}, \ u_h^{n+1}\in V_h^k$ of the LDG discretization (\ref{3.aa}), with boundary condition (\ref{2.a1}) and conditions on $f,H$ stated in (\ref{1.fh}), satisfy the following entropy dissipation relation
\begin{align}\label{3.c}
{E}_h(u_h^{n+1})\leqslant{E}_h(u_h^{n}),
\end{align}
which implies that the LDG discretization is unconditionally entropy dissipative.
\end{thm}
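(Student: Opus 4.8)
The plan is to mimic the continuous entropy estimate \eqref{1.c} at the discrete level, replacing the chain-rule identity $\frac{d}{dt}E(u)=(\phi+H'(u),u_t)$ by a convexity inequality. The key point is that $H$ is convex on the set where the scheme is relevant — more precisely, along the relevant states $H''\ge 0$ wherever $f>0$; but in fact for the entropy estimate we only need convexity of $H$ itself, which follows from $f(u)H''(u)\ge 0$ together with $f\ge 0$ on $\mathbb{R}^+$ (away from the zeros of $f$), so that $H$ is convex. Convexity gives the pointwise inequality
\begin{align*}
H(u_h^{n+1})-H(u_h^{n})\leqslant H'(u_h^{n+1})\,(u_h^{n+1}-u_h^{n}),
\end{align*}
and the affine term $u\phi(\pmb{x})$ contributes exactly $\phi(\pmb{x})(u_h^{n+1}-u_h^{n})$. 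Integrating over $\Omega$ yields
\begin{align*}
E_h(u_h^{n+1})-E_h(u_h^{n})\leqslant \bigl(\phi(\pmb{x})+H'(u_h^{n+1}),\, u_h^{n+1}-u_h^{n}\bigr).
\end{align*}

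Next I would show that the right-hand side equals $-\tau^{n+1}(f(u_h^{n+1})\pmb{s}_h^{n+1},\pmb{s}_h^{n+1})\le 0$, which is the discrete analogue of the computation in Theorem~\ref{th: Stability1}. To do this, note that $\phi(\pmb{x})+H'(u_h^{n+1})$ is \emph{not} in general in $V_h^k$, so one cannot simply plug it in as a test function. The standard device is to use equation \eqref{3.a3}: since $-L_h^4(u_h^{n+1};\varphi)=(\phi(\pmb{x})+H'(u_h^{n+1}),\varphi)$ and $(p_h^{n+1},\varphi)+L_h^4(u_h^{n+1};\varphi)=0$, we get $(\phi(\pmb{x})+H'(u_h^{n+1}),\varphi)=(p_h^{n+1},\varphi)$ for all $\varphi\in V_h^k$. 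Because $u_h^{n+1}-u_h^{n}\in V_h^k$, we may therefore replace $\phi(\pmb{x})+H'(u_h^{n+1})$ by $p_h^{n+1}$ inside the inner product:
\begin{align*}
\bigl(\phi(\pmb{x})+H'(u_h^{n+1}),\, u_h^{n+1}-u_h^{n}\bigr)=\bigl(p_h^{n+1},\, u_h^{n+1}-u_h^{n}\bigr).
\end{align*}
Then I choose the test functions exactly as in the proof of Theorem~\ref{th: Stability1}, but on the backward-Euler system: $\rho=p_h^{n+1}$ in \eqref{3.a}, $\pmb{\theta}=-\pmb{s}_h^{n+1}$ in \eqref{3.a1}, $\pmb{\eta}=\pmb{q}_h^{n+1}$ in \eqref{3.a2}. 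Multiplying \eqref{3.a} by $\tau^{n+1}$ turns its first term into $(u_h^{n+1}-u_h^{n},p_h^{n+1})$, and adding the three equations, the $L_h^1,L_h^2,L_h^3$ boundary and volume terms telescope and cancel by the flux algebra \eqref{2.3b} together with the zero-flux boundary condition \eqref{2.a1}, leaving precisely
\begin{align*}
\bigl(p_h^{n+1},\, u_h^{n+1}-u_h^{n}\bigr)=-\tau^{n+1}\,(f(u_h^{n+1})\pmb{s}_h^{n+1},\pmb{s}_h^{n+1}).
\end{align*}
Finally, $f(u_h^{n+1})\ge 0$ on $\mathbb{R}^+$ by \eqref{1.fh} gives nonnegativity of this quadratic form, hence $E_h(u_h^{n+1})-E_h(u_h^{n})\le 0$, which is \eqref{3.c}.

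The main obstacle — and the only place where the argument differs substantively from the semi-discrete proof — is the passage from the exact time-derivative identity to the convexity inequality, i.e. the step $H(u_h^{n+1})-H(u_h^{n})\le H'(u_h^{n+1})(u_h^{n+1}-u_h^{n})$; everything else is a verbatim rerun of Theorem~\ref{th: Stability1}'s flux cancellation with $\tau^{n+1}$ inserted. One subtlety worth a remark is that this convexity bound, unlike the continuous chain rule, is one-sided, which is exactly why we obtain a dissipation \emph{inequality} rather than an identity; a second subtlety is that the argument implicitly assumes the numerical solution $u_h^{n+1}$ stays in $\mathbb{R}^+$ so that $f(u_h^{n+1})\ge 0$ and $H$ is evaluated on its domain — without the positivity limiter this has to be assumed, whereas with the KKT limiter of later sections it is guaranteed. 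I would also note that no time-step restriction enters anywhere in the estimate, which is the sense in which the dissipation is \emph{unconditional}.
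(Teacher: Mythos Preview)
Your proposal is correct and follows essentially the same route as the paper's proof: the paper chooses the four test functions $\rho=p_h^{n+1}$, $\pmb{\theta}=-\pmb{s}_h^{n+1}$, $\pmb{\eta}=\pmb{q}_h^{n+1}$, $\varphi=-(u_h^{n+1}-u_h^n)/\tau^{n+1}$ simultaneously, applies the flux cancellation \eqref{2.3b}, and then invokes the Taylor expansion of $H$ with nonnegative remainder $\tfrac12 H''(\xi^{n+1})(u_h^{n+1}-u_h^n)^2$, which is exactly your convexity inequality. Your explicit observation that $\phi+H'(u_h^{n+1})\notin V_h^k$ and must be replaced by $p_h^{n+1}$ via \eqref{3.a3} is precisely what the paper's choice of $\varphi$ accomplishes, so the arguments coincide.
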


\begin{proof}
By choosing, respectively,  in (\ref{3.a})-(\ref{3.a3}) the following test functions
\begin{align*}
\rho=p_h^{n+1},\quad \pmb{\theta}=-\pmb{s}_h^{n+1},\quad \pmb{\eta}=\pmb{q}_h^{n+1},\quad \varphi=-\displaystyle \frac{u_h^{n+1}-u_h^{n}}{\tau^{n+1}},
\end{align*}
 we get
\begin{align*}
&\left(\phi(\pmb{x}), \displaystyle \frac{u_h^{n+1}-u_h^{n}}{\tau^{n+1}}\right)+\left(H'(u_h^{n+1}), \displaystyle \frac{u_h^{n+1}-u_h^{n}}{\tau^{n+1}}\right)\nn\\
=&-\left(f(u_h^{n+1})\pmb{s}_h^{n+1},\pmb{s}_h^{n+1}\right)-\left(\pmb{q}_h^{n+1}, \nabla p_h^{n+1}\right)+\sum_{K\in \mathcal{T}_{h}}(\widehat{\pmb{q}}_h^{n+1}\cdot\pmb{\nu}, p_h^{n+1})_{\partial K}\nn\\
&-(p_h^{n+1}, \nabla\cdot\pmb{q}_h^{n+1})+\sum_{K\in \mathcal{T}_{h}}(\widehat{p}_h^{n+1}, \pmb{\nu}\cdot\pmb{q}_h^{n+1})_{\partial K}\nn\\
=&-(f(u_h^{n+1})\pmb{s}_h^{n+1},\pmb{s}_h^{n+1})-\sum_{K\in \mathcal{T}_{h}}(\pmb{q}_h^{n+1}\cdot\pmb{\nu}, p_h^{n+1})_{\partial K}+\sum_{K\in \mathcal{T}_{h}}(\widehat{\pmb{q}}_h^{n+1}\cdot\pmb{\nu}, p_h^{n+1})_{\partial K}\nn\\
&+\sum_{K\in \mathcal{T}_{h}}(\widehat{p}_h^{n+1}, \pmb{\nu}\cdot\pmb{q}_h^{n+1})_{\partial K}.
\end{align*}
Together with (\ref{2.3b}), the numerical fluxes (\ref{2.a}) and the boundary condition (\ref{2.a1}), we obtain then
\begin{align*}
\left(\phi(\pmb{x}), \displaystyle \frac{u_h^{n+1}-u_h^{n}}{\tau^{n+1}}\right)+\left(H'(u_h^{n+1}), \displaystyle \frac{u_h^{n+1}-u_h^{n}}{\tau^{n+1}}\right)=-\left(f(u_h^{n+1})\pmb{s}_h^{n+1},\pmb{s}_h^{n+1}\right).
\end{align*}
Because of the following Taylor expansion
\begin{align*}
H(u_h^{n})=&H(u_h^{n+1})+H'(u_h^{n+1})(u_h^{n}-u_h^{n+1})+\frac12H''(\xi^{n+1})(u_h^{n+1}-u_h^{n})^2,\quad \xi^{n+1}\in(u_h^{n}, u_h^{n+1}),
\end{align*}
we have, using the conditions on $f,H$ stated in (\ref{1.fh}) and the definition of $E_h$ in (\ref{Eh}),
\begin{align*}
{E}_h(u_h^{n+1})-{E}_h(u_h^{n})=&\left(\phi(\pmb{x}), u_h^{n+1}-u_h^{n}\right)+\left(H(u_h^{n+1})-H(u_h^{n}),1\right)\nn\\
=&-\tau^{n+1}\left(f(u_h^{n+1})\pmb{s}_h^{n+1},\pmb{s}_h^{n+1}\right)-\frac12\left(H''(\xi^{n+1}), \left(u_h^{n+1}-u_h^{n}\right)^2\right)\nn\\
\leqslant&\ 0.
\end{align*}
\end{proof}

\subsection {Higher order DIRK-LDG discretizations}\label{Second}
For higher order accurate implicit in time discretizations of the system (\ref{2.bb}), we use a Diagonally Implicit Runge-Kutta (DIRK) method \cite{hairer2010solving}. Assuming we know the numerical solution at time level $n$,
we obtain the solution at time level $n+1$ with a DIRK method by solving for each DIRK stage $i, i=1,2,\cdots,s$ the following equations.
\begin{subequations}\label{3.2aa}
\begin{alignat}{2}\label{3.2a}
\left(\frac{u_h^{n+1,i}-u_h^{n}}{\tau^{n+1} }, \rho\right)+\sum_{j=1}^{i}a_{ij}L_h^1(\pmb{q}_h^{n+1,j}; \rho)=0, \\
\label{3.2a1}
(\pmb{q}_h^{n+1,i}, \pmb{\theta})+L_h^2(u_h^{n+1,i}, \pmb{s}_h^{n+1,i}; \pmb{\theta})=0,\\
\label{3.2a2}
(\pmb{s}_h^{n+1,i}, \pmb{\eta})+L_h^3(p_h^{n+1,i}; \pmb{\eta})=0,\\
\label{3.2a3}
(p_h^{n+1,i}, \varphi)+L_h^4(u_h^{n+1,i}; \varphi)=0.
\end{alignat}
 \end{subequations}
Then the solution at time $t_{n+1}$ is
\begin{align}\label{3.2c}
 (u_h^{n+1}, \rho)= &(u_h^{n}, \rho)-\tau\sum_{i=1}^{s}b_{i}L_h^1(\pmb{q}_h^{n+1,i}; \rho).
\end{align}
The coefficient matrices $(a_{ij})$ in (\ref{3.2a}) and $(b_i)$ in (\ref{3.2c}) are defined in the Butcher tableau. We choose for polynomials of order $k=1$ and  $k=2,3$ the DIRK methods introduced in \cite{alexander1977diagonally} and \cite{skvortsov2006diagonally}, respectively, that satisfy $a_{si}=b_i,\ i=1,2,\cdot\cdot\cdot,s$, which implies $u_h^{n+1}=u_h^{n+1,s}$. The order of these DIRK methods is $k+1$. The above time discretization methods are easy to implement since the matrix $(a_{ij})$ in the DIRK methods has a lower triangular structure, which means that we can compute the DIRK stages one after another, starting from $i=1$ up to $i=s$. For detailed information about the DIRK time integration method, we refer to \cite{hairer2010solving}.

\section{Higher order accurate positivity preserving DIRK-LDG discretizations}\label{Positivity}

The positivity constraints on the LDG solution will be enforced by transforming the DIRK-LDG equations with positivity constraints into a mixed complementarity problem using the Karush-Kuhn-Tucker (KKT) equations  \cite{facchinei2007finite}. In the following sections, we will first define the positivity preserving KKT-DIRK-LDG discretization. Next, we will consider the unique solvability and unconditional entropy dissipation of the discrete KKT system.
\subsection {KKT-system}\label{KKT}
For the KKT equations \cite{facchinei2007finite}, we define the set
\begin{align}\label{K}
  \mathbb{K}:=\{\widetilde{U}\in \mathbb{R}^{dof}|\ h(\widetilde{U})=0,\ g(\widetilde{U})\leqslant 0\},
 \end{align}
with equality constraints $h: \mathbb{R}^{dof} \rightarrow \mathbb{R}^{l}$ and inequality constraints $g: \mathbb{R}^{dof} \rightarrow \mathbb{R}^{m}$ being vector-valued continuously differentiable functions. The inequality constraints are used to ensure positivity. The equality constraint ensures that the limited DIRK-LDG discretization is mass conservative. Mass conservation is a property of the unlimited DIRK-LDG discretization, but one has to ensure that this property also holds after applying the positivity preserving limiter.

Let $L$ be the LDG discretization (\ref{3.2aa}) for each DIRK stage $i=1,2,\cdots,s$, without a positivity preserving limiter. We assume that $L$ is a continuously differentiable function from $ \mathbb{K}$ to $\mathbb{R}^{dof}$. The corresponding KKT-system \cite{facchinei2007finite} then is
\begin{subequations}\label{5}
\begin{alignat}{2}\label{5.1a}
L(\widetilde{U})+\nabla_{\widetilde{U}} h(\widetilde{U})^T\mu+\nabla_{\widetilde{U}} g(\widetilde{U})^T\lambda=0,\\
\label{5.1a1}
-h(\widetilde{U})=0,\\
 \label{5.1a2}
0\geqslant g(\widetilde{U})\bot \lambda\geqslant 0,
\end{alignat}
\end{subequations}
where $\mu\in \mathbb{R}^{l}$ and $\lambda\in \mathbb{R}^{m}$ are the Lagrange multipliers used to ensure $h(\widetilde{U})=0$ and $g(\widetilde{U})\leqslant 0$, respectively, $\widetilde{U}\in R^{dof}$ are the LDG coefficients in the KKT-DIRK-LDG discretization, and $\nabla_{\widetilde{U}}$ denotes the gradient with respect to $\widetilde{U}$. The compatibility condition (\ref{5.1a2}) is equivalent to
\begin{align*}
g_j(\widetilde{U})\leqslant 0,\quad \lambda_j\geqslant 0, \quad \mathrm{and} \quad g_j(\widetilde{U})\lambda_j=0,\quad j=1,2,\cdot\cdot\cdot,m,
 \end{align*}
which can be expressed as
\begin{align*}
\min(-g_j(\widetilde{U}), \lambda_j)=0,\quad j=1,2,\cdot\cdot\cdot,m.
 \end{align*}
The KKT-system then can be formulated as
 \begin{align}\label{5.1b}
 0=F(z)=\left(\begin{array}{cc}
 L(\widetilde{U})+\nabla_{\widetilde{U}} h(\widetilde{U})^T\mu+\nabla_{\widetilde{U}} g(\widetilde{U})^T\lambda \\
 -h(\widetilde{U})\\
\min(-g(\widetilde{U}), \lambda)
  \end{array}
 \right).
  \end{align}
 Here $z=(\widetilde{U},\mu,\lambda)\in\mathbb{R}^{dof+l+m}$, and $F: \mathbb{R}^{dof+l+m} \rightarrow \mathbb{R}^{dof+l+m}$ represents the DIRK-LDG discretization combined with the positivity and mass conservation constraints. Note, the KKT system (\ref{5.1b}) is nonlinear and $F(z)$ is not continuously differentiable, as is necessary for standard Newton methods, but semi-smooth. We will therefore solve (\ref{5.1b}) with the active set semi-smooth Newton method presented in \cite{van2019positivity}.

\subsection {Positivity preserving LDG discretizations}\label{KKT numerical}

In this section, we will provide the details of the higher order accurate positivity preserving DIRK-LDG discretizations (\ref{3.2aa}) coupled with the positivity and mass conservation constraints using Lagrange multipliers as stated in (\ref{5}).

 Let $N_k$ be the number of basis functions in one element. Let $N_e$ be the number of elements $K$ in the tessellation $\mathcal{T}_{h}$ of
 the domain $\Omega$. We introduce the following notation for the element-wise positivity preserving LDG solution
\begin{align*}
U_h|_K:=\sum_{j=1}^{N_k} \widetilde{U}_j^K\phi_j^K, \quad \pmb{Q}_h|_K:=\sum_{j=1}^{N_k} \widetilde{ \pmb{Q}}_j^K\phi_j^K
\end{align*}
with $K\in  \mathcal{T}_h$, $\phi_j^K$ the tensor product Legendre basis functions in $\mathcal{Q}_{k}(K)$, and LDG coefficients $\widetilde{U}_j^K\in \mathbb{R},\ \widetilde{ \pmb{Q}}_j^K\in \mathbb{R}^d$. Taking in each element $K\in \mathcal{T}_h$ the test function $\rho=\phi_j^K,\ j=1,2,\cdots, N_k$ in the operator $L_h^1(\pmb{Q}_h; \rho)$, stated in (\ref{L11}), we can define
\begin{align}\label{talta}
\mathbb{L}_h^1(\widetilde{\pmb{Q}}):=L_h^1(\pmb{Q}_h; \rho)\in \mathbb{R}^{N_kN_e},
\end{align}
with similar definitions of $\mathbb{L}_h^k$ for $L_h^k, k=2,3,4$ stated in (\ref{L12})-(\ref{L14}).

Representing the block-diagonal mass matrices for the scalar and vector variables as $M\in \mathbb{R}^{N_kN_e\times N_kN_e}$ and $\pmb{M}\in \mathbb{R}^{dN_kN_e\times dN_kN_e}$, respectively, the operator $L$ for DIRK stage $i\ (i=1,2,\cdots, s)$, as stated in (\ref{3.2a}),  can be expressed as
 \begin{align}\label{5.1c}
L(\widetilde{U}^{n+1,i}):=&M(\widetilde{U}^{n+1,i}-\widetilde{U}^{n})+\tau^{n+1}\sum_{j=1}^{i}a_{ij}\mathbb{L}_h^1(\widetilde{\pmb{Q}}^{n+1,j}),
\end{align}
with LDG  coefficients $\widetilde{U}^{n+1,i}\in\mathbb{R}^{N_kN_e}$. Similarly, using  (\ref{3.2a1}), (\ref{3.2a2}) and (\ref{3.2a3}), we have
\begin{subequations}\label{5.1cc}
\begin{alignat}{2}\label{5.1c1}
\widetilde{\pmb{Q}}^{n+1,i}=&-\pmb{M}^{-1}\mathbb{L}_h^2(\widetilde{U}^{n+1,i}, \widetilde{\pmb{S}}^{n+1,i}),\\
\label{5.1c2}
\widetilde{\pmb{S}}^{n+1,i}=&-\pmb{M}^{-1}\mathbb{L}_h^3(\widetilde{P}^{n+1,i}),\\
\label{5.1c3}
\widetilde{P}^{n+1,i}=&-M^{-1}\mathbb{L}_h^4(\widetilde{U}^{n+1,i}),
\end{alignat}
\end{subequations}
with LDG  coefficients $\widetilde{\pmb{Q}}^{n+1,i}\in \mathbb{R}^{dN_kN_e}$, $\widetilde{\pmb{S}}^{n+1,i}\in \mathbb{R}^{dN_kN_e}$, $\widetilde{P}^{n+1,i}\in \mathbb{R}^{N_kN_e}$.

The constraints on the DIRK-LDG discretization can be directly imposed on the DG coefficients for each DIRK stage using the equality and inequality constraints in the KKT-system (\ref{5.1b}). We obtain for each DIRK stage $i$, with $i=1,2,\cdots,s$, the LDG coefficients $\widetilde{U}^{n+1,i}$ by solving the following KKT system for $\widetilde{U}^{n+1,i}$,
\begin{align}\label{5.1breal}
 \left(\begin{array}{cc}
 L(\widetilde{U}^{n+1,i})+\nabla_{\widetilde{U}} h(\widetilde{U}^{n+1,i})^T\mu+\nabla_{\widetilde{U}} g(\widetilde{U}^{n+1,i})^T\lambda \\
 -h(\widetilde{U}^{n+1,i})\\
\min(-g(\widetilde{U}^{n+1,i}), \lambda)
  \end{array}
 \right)=0,
  \end{align}
where the positivity preserving inequality constraint $g(\widetilde{U}^{n+1,i})$ and the mass conservation equality constraint $h(\widetilde{U}^{n+1,i})$ are defined as follows.

 \textit{1. Positivity preserving inequality constraint}

In each element $K\in \mathcal{T}_h$, we define the function $g$ stated in (\ref{5.1breal}) as
  \begin{align}\label{5.1e}
g_{p}^K(\widetilde{U}^{n+1,i})=u_{\min}-\sum_{j=1}^{N_k}\widetilde{U}_j^{K,(n+1,i)}\phi_j^K(\pmb{x}_p), \quad p=1,\cdot\cdot\cdot,N_p,
 \end{align}
 with $N_{p}$ the number of Gauss-Lobatto quadrature points, and $\pmb{x}_p$ the Gauss-Lobatto quadrature points where the inequality constraints  $U_h(\pmb{x}_p)\geqslant u_{\min}$ are imposed. The use of Gauss-Lobatto quadrature rules ensures that the positivity constraint is also
imposed in the computation of the numerical fluxes at the element edges where
Gauss-Lobatto rules have, next to the element itself, also quadrature points. Note,
the Gauss-Lobatto quadrature points $\pmb{x}_p$ are the only points  used in  the LDG
 discretization and  the positivity constraint $u_{\min}$ therefore only needs  to be  enforced
 at these points.

 \textit{2. Mass conservation equality constraint}

 In order to ensure mass conservation of the LDG discretization when the positivity constraint is enforced, we impose the following equality constraint, which is obtained
by setting $\rho=1$ in (\ref{3.2a}) and using the numerical flux (\ref{2.a}) or (\ref{2.aa}).
 \begin{align}\label{equality constraint}
h(\widetilde{U}^{n+1,i})
=&\sum_{K\in  \mathcal{T}_h}\int_K U_h^n dK+\tau^{n+1}\sum_{j=1}^{i}a_{ij}\sum_{\substack{K\in  \mathcal{T}_h\\ \partial K\cap \partial \Omega\neq\emptyset}}(\widehat{\pmb{Q}}_h^{n+1,j}\cdot\pmb{\nu}, 1)_{\partial K}\nn\\
&-\sum_{K\in  \mathcal{T}_h}\sum_{j=1}^{N_k}\widetilde{U}_j^{K,(n+1,i)}\int_K\phi_j^K(\pmb{x}) dK,
 \end{align}
with $U_h^n$ the KKT-DIRK-LDG solution at time $t_n$.

 For each DIRK stage $i$, the KKT-system (\ref{5.1breal}) for the higher order accurate positivity preserving LDG discretization is now defined. After solving the KKT equations (\ref{5.1breal}) for $i=1,\cdots,s$, the numerical solution at time $t^{n+1}$ is directly obtained from the last DIRK stage, $U_h^{n+1}=U_h^{n+1,s}$ since we use DIRK methods with $a_{si}=b_i$.
 \begin{rem}
In order to ensure the positivity of the discrete initial solution $U_h^0$, we use the $L^2$-projection coupled with the positivity constraint  (\ref{5.1e}), which is obtained by replacing $\widetilde{U}^{n+1,i}$ with $\widetilde{U}^{0}$. The equality constraint ensures mass conservation of the positivity limited initial solution
  \begin{align*}
h(\widetilde{U}^{0})
=&\sum_{K\in  \mathcal{T}_h}\int_K u_0(\pmb{x}) dK-\sum_{K\in  \mathcal{T}_h}\sum_{j=1}^{N_k}\widetilde{U}_j^{K,0}\int_K\phi_j^K(\pmb{x}) dK.
 \end{align*}
 The constraints on the $L^2$-projection are imposed using KKT equations similar to (\ref{5.1b}). To prevent pathological cases, we assume that the limited initial solution satisfies
 \[\displaystyle \frac1{|\Omega|}\sum_{K\in  \mathcal{T}_h}\int_K u_0(\pmb{x})  dK\geqslant u_{\min}.\]
 \end{rem}
 \begin{rem}
We emphasize that $u_{\min}$ must be chosen strictly positive to ensure that errors do not violate the positivity of the numerical solution due to the finite precision of the computer arithmetic.
\end{rem}

\subsection {Unique solvability and stability of the positivity preserving LDG discretization}\label{well-posedness and stability for KKT system1}
In Section \ref{KKT numerical}, we have presented the positivity preserving LDG discretization for (\ref{1.a}). In this section, we will consider the unique solvability of the algebraic equations resulting from the backward Euler KKT-LDG discretization. In the theoretical analysis we will also consider the entropy dissipation of the positivity preserving backward Euler LDG discretization and use periodic boundary conditions.

With (\ref{5.1c})-(\ref{equality constraint}), the positivity preserving backward Euler LDG discretization results now in the following KKT system,
\begin{subequations}\label{43.1}
\begin{alignat}{2}\label{43.1a}
L(\widetilde{U}^{n+1})+\nabla_{\widetilde{U}} h(\widetilde{U}^{n+1})^T\mu^{n+1}+\nabla_{\widetilde{U}} g(\widetilde{U}^{n+1})^T\lambda^{n+1}=0,\\
\label{43.1a1}
-h(\widetilde{U}^{n+1})=0,\\
 \label{43.1a2}
\min(-g(\widetilde{U}^{n+1}), \lambda^{n+1})=0.
\end{alignat}
\end{subequations}
Here $L: \mathbb{R}^{N_kN_e} \rightarrow \mathbb{R}^{N_kN_e}$ and
 \begin{align}\label{43.1c}
L(\widetilde{U}^{n+1}):=&M\displaystyle(\widetilde{U}^{n+1}-\widetilde{U}^{n})+\tau^{n+1}B\widetilde{\pmb{Q}}^{n+1},\\
\label{43.1cQ1}
\pmb{M} \widetilde{\pmb{Q}}^{n+1}=&C_d(\widetilde{U}^{n+1})\widetilde{\pmb{S}}^{n+1},\\
\label{43.1cQ2}
\pmb{M} \widetilde{\pmb{S}}^{n+1}=&A\widetilde{P}^{n+1},\\
\label{43.1cQ3}
M \widetilde{P}^{n+1}=&D(\widetilde{U}^{n+1}).
 \end{align}
From (\ref{talta})-(\ref{5.1cc}), we obtain that
\begin{align}\label{Matrix}
B\widetilde{\pmb{Q}}^{n+1}=&\mathbb{L}_h^1(\widetilde{\pmb{Q}}^{n+1})\in \mathbb{R}^{N_kN_e},\\
\label{Matrix1}
C_d(\widetilde{U}^{n+1})\widetilde{\pmb{S}}^{n+1}=&-\mathbb{L}_h^2(\widetilde{U}^{n+1},\widetilde{\pmb{S}}^{n+1})\in \mathbb{R}^{dN_kN_e},\\
\label{Matrix2}
A\widetilde{P}^{n+1}=&-\mathbb{L}_h^3(\widetilde{P}^{n+1})\in \mathbb{R}^{dN_kN_e},\\
\label{Matrix3}
D(\widetilde{U}^{n+1})=&-\mathbb{L}_h^4(\widetilde{U}^{n+1})\in \mathbb{R}^{N_kN_e},
\end{align}
where
\begin{align}\label{Matrix4}
C_d(\widetilde{U}^{n+1})=&\left(
  \begin{array}{ccc}
    C(\widetilde{U}^{n+1}) &  & \\
      & \ddots & \\
     &  & C(\widetilde{U}^{n+1}) \\
  \end{array}
\right)
\in\mathbb{R}^{dN_kN_e\times dN_kN_e},\ C(\widetilde{U}^{n+1})\in \mathbb{R}^{N_kN_e}.
\end{align}
The constraints $h: \mathbb{R}^{N_kN_e} \rightarrow \mathbb{R},\ g: \mathbb{R}^{N_kN_e} \rightarrow \mathbb{R}^{N_pN_e}$ are defined by
 \begin{align}
\label{43.1c1}
h(\widetilde{U}^{n+1}):=&\sum_{K\in  \mathcal{T}_h}\int_K U_h^0 dK-\sum_{K\in  \mathcal{T}_h}\sum_{j=1}^{N_k}\widetilde{U}_j^{K,(n+1)}\int_K\phi_j^K(\pmb{x}) dK,\\
\label{43.1c2}
g(\widetilde{U}^{n+1}):=&(g_{1}^{K_1}(\widetilde{U}^{n+1}),\cdots,g_{N_p}^{K_1}(\widetilde{U}^{n+1}),\cdots,g_1^{K_{N_e}}(\widetilde{U}^{n+1}),\cdots,g_{N_p}^{K_{N_e}}(\widetilde{U}^{n+1})),
\end{align}
 with the definition of the constraints $g_p^{K_j},\ 1\leqslant p\leqslant N_p,1\leqslant j\leqslant N_e$ given in (\ref{5.1e}).

\subsubsection {Auxiliary results used to prove the solvability of the KKT-system}\label{Auxiliary}
In this section, we will introduce some auxiliary results, which will be used in Section \ref{well-posedness and stability for KKT system} to prove the unique solvability of the KKT-system (\ref{43.1}).

 \begin{defi}\cite[Sections 1.1, 3.2]{facchinei2007finite}\label{VI}
Let $\mathbb{K}$ be given by (\ref{K}), given a map $L: \mathbb{K}\rightarrow  \mathbb{R}^{dof}$. The Variational Inequality (VI$(\mathbb{K},L)$) is to find $\widetilde{U}\in \mathbb{K}$ such that
\begin{align}\label{VIe}
(y-\widetilde{U})^TL(\widetilde{U})\geqslant 0,\quad y\in \mathbb{K}.
\end{align}
The solution of VI$(\mathbb{K},L)$ (\ref{VIe}) is denoted by SOL$(\mathbb{K},L)$.
\end{defi}

Using the nodal basis function and the definition of $g$ in (\ref{43.1c2}) and (\ref{5.1e}), the inequality constraint set in (\ref{K}) can be written as
\begin{align}\label{Kb}
  \mathbb{K}_b:=\{\widetilde{U}\in \mathbb{R}^{dof}|\ \widetilde{U}_i^{\min}\leqslant\widetilde{U}_i\leqslant \widetilde{U}_i^{\max}, i\in\{1,\cdots,dof\}\},
 \end{align}
and we write $\mathbb{K}_b$ as
\begin{align}
  \mathbb{K}_b=\prod_{\vartheta=1}^N \mathbb{K}_{n_\vartheta},
 \end{align}
 where $\mathbb{K}_{n_\vartheta}$ is a subset of $\mathbb{R}^{n_\vartheta}$ with $\displaystyle \sum_{\vartheta=1}^N n_\vartheta=dof$. Thus for a vector $\widetilde{U}\in \mathbb{K}_b$, we write $\widetilde{U}=(\widetilde{U}_\vartheta)$, where each $\widetilde{U}_\vartheta$
belongs to $\mathbb{K}^{n_\vartheta}$.

  \begin{defi}\cite[Section 3.5.2]{facchinei2007finite}\label{uniPfunc}
Let $\mathbb{K}_b$ be given by (\ref{Kb}), a map $L: \mathbb{K}_b\rightarrow  \mathbb{R}^{dof}$ is said to be

a) a P-function on $\mathbb{K}_b$ if for all pairs of distinct vectors $\widetilde{U}$ and $\widetilde{U}'$ in $\mathbb{K}_b$,
\begin{align*}
\max_{1\leqslant \vartheta\leqslant N}(\widetilde{U}_\vartheta-\widetilde{U}'_\vartheta)^T(L_\vartheta(\widetilde{U})-L_\vartheta(\widetilde{U}'))>0,
\end{align*}

b) a uniformly P-function on $\mathbb{K}_b$ if there exists a constant $\varpi>0$ such that for all pairs of distinct vectors $\widetilde{U}$ and $\widetilde{U}'$  in $\mathbb{K}_b$,
\begin{align*}
\max_{1\leqslant \vartheta\leqslant N}(\widetilde{U}_\vartheta-\widetilde{U}'_\vartheta)^T(L_\vartheta(\widetilde{U})-L_\vartheta(\widetilde{U}'))\geqslant \varpi\|\widetilde{U}-\widetilde{U}'\|^2.
\end{align*}

\end{defi}

\begin{lmm} \cite[Proposition 3.5.10]{facchinei2007finite}\label{sym}
Let $\mathbb{K}_b$ be given by (\ref{Kb}).

a) If $L$ is a P-function on $\mathbb{K}_b$, then VI$(\mathbb{K}_b,L)$ has at most one solution.

b) If each $\mathbb{K}_{n_\vartheta}$ is closed convex and $L$ is a continuous uniformly P-function on $\mathbb{K}_b$, then the VI$(\mathbb{K}_b,L)$ has a unique solution.
\end{lmm}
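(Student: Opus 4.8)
The plan is to prove the two assertions separately: uniqueness follows directly from the defining inequality of a P-function together with the product structure of $\mathbb{K}_b$, while existence requires a compactness/fixed-point argument combined with an a priori bound that the uniform P-function hypothesis supplies.

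For part a), I would argue by contradiction. Suppose $\widetilde{U}$ and $\widetilde{U}'$ are two distinct points of $\mathrm{SOL}(\mathbb{K}_b,L)$. Since $\mathbb{K}_b=\prod_{\vartheta=1}^{N}\mathbb{K}_{n_\vartheta}$, the variational inequality (\ref{VIe}) decouples block-wise: testing (\ref{VIe}) with vectors that agree with $\widetilde{U}$ in all blocks except the $\vartheta$-th shows $(y_\vartheta-\widetilde{U}_\vartheta)^T L_\vartheta(\widetilde{U})\geqslant 0$ for all $y_\vartheta\in\mathbb{K}_{n_\vartheta}$ and every $\vartheta$, and likewise for $\widetilde{U}'$. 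Choosing $y_\vartheta=\widetilde{U}'_\vartheta$ in the inequality at $\widetilde{U}$ and $y_\vartheta=\widetilde{U}_\vartheta$ in the one at $\widetilde{U}'$ and adding yields $(\widetilde{U}_\vartheta-\widetilde{U}'_\vartheta)^T(L_\vartheta(\widetilde{U})-L_\vartheta(\widetilde{U}'))\leqslant 0$ for every $\vartheta$. Taking the maximum over $\vartheta$ contradicts Definition \ref{uniPfunc}a), which requires this maximum to be strictly positive for distinct vectors. Hence a solution, if one exists, is unique.

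For part b), uniqueness is immediate from a), since a uniformly P-function is a P-function. To obtain existence, fix a reference point $\widetilde{U}^0\in\mathbb{K}_b$ (the set is nonempty, being a product of the intervals $[\widetilde{U}_i^{\min},\widetilde{U}_i^{\max}]$). For $R>\|\widetilde{U}^0\|$, replace each factor by the compact convex set $\mathbb{K}_{n_\vartheta}^R:=\mathbb{K}_{n_\vartheta}\cap\{\,\|\cdot\|\leqslant R\,\}$, so that $\mathbb{K}_b^R:=\prod_\vartheta\mathbb{K}_{n_\vartheta}^R$ is nonempty, compact and convex. By the Hartman--Stampacchia existence theorem for variational inequalities over compact convex sets (a consequence of Brouwer's fixed-point theorem) together with continuity of $L$, $\mathrm{VI}(\mathbb{K}_b^R,L)$ has a solution $\widetilde{U}^R$, and the block decoupling gives $(\widetilde{U}^0_\vartheta-\widetilde{U}^R_\vartheta)^T L_\vartheta(\widetilde{U}^R)\geqslant 0$ for each $\vartheta$. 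The crucial step is a uniform bound on $\widetilde{U}^R$: for each $\vartheta$,
\begin{align*}
(\widetilde{U}^R_\vartheta-\widetilde{U}^0_\vartheta)^T\bigl(L_\vartheta(\widetilde{U}^R)-L_\vartheta(\widetilde{U}^0)\bigr)\leqslant(\widetilde{U}^0_\vartheta-\widetilde{U}^R_\vartheta)^T L_\vartheta(\widetilde{U}^0)\leqslant\|\widetilde{U}^R_\vartheta-\widetilde{U}^0_\vartheta\|\,\|L_\vartheta(\widetilde{U}^0)\|,
\end{align*}
using $(\widetilde{U}^0_\vartheta-\widetilde{U}^R_\vartheta)^T L_\vartheta(\widetilde{U}^R)\geqslant 0$ in the first step. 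Letting $\vartheta^\ast$ attain the maximum in Definition \ref{uniPfunc}b),
\begin{align*}
\varpi\|\widetilde{U}^R-\widetilde{U}^0\|^2\leqslant\|\widetilde{U}^R_{\vartheta^\ast}-\widetilde{U}^0_{\vartheta^\ast}\|\,\|L_{\vartheta^\ast}(\widetilde{U}^0)\|\leqslant\|\widetilde{U}^R-\widetilde{U}^0\|\max_{1\leqslant\vartheta\leqslant N}\|L_\vartheta(\widetilde{U}^0)\|,
\end{align*}
hence $\|\widetilde{U}^R-\widetilde{U}^0\|\leqslant\varpi^{-1}\max_\vartheta\|L_\vartheta(\widetilde{U}^0)\|=:C$, a bound independent of $R$.

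It remains to pass from the truncated problem to the original one. Choosing $R>\|\widetilde{U}^0\|+C$ forces $\|\widetilde{U}^R\|<R$, so $\widetilde{U}^R$ lies in the relative interior of each truncating ball; then for any $y\in\mathbb{K}_b$ the segment $\widetilde{U}^R+t(y-\widetilde{U}^R)$ stays in $\mathbb{K}_b^R$ for $t>0$ small, and inserting it into $\mathrm{VI}(\mathbb{K}_b^R,L)$, dividing by $t$ and letting $t\to 0^+$ gives $(y-\widetilde{U}^R)^T L(\widetilde{U}^R)\geqslant 0$, so $\widetilde{U}^R\in\mathrm{SOL}(\mathbb{K}_b,L)$. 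Combined with part a), this proves unique solvability. The main obstacle is this existence half: one has to marry a fixed-point/degree-type existence result on truncated compact sets with a coercivity estimate, and the point is precisely that the uniform P-function property (and not mere P-property) makes the truncation radius $R$ that captures a solution independent of $R$ itself.
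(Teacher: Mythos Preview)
The paper does not prove this lemma: it is quoted verbatim from \cite[Proposition~3.5.10]{facchinei2007finite} and used as a black box, so there is no ``paper's own proof'' to compare against. Your argument is correct and is essentially the standard proof given in that reference: the block-wise decoupling of the VI over a Cartesian product together with the P-function inequality gives uniqueness exactly as you wrote, and existence is obtained by combining Hartman--Stampacchia on truncations with the coercivity-type a~priori bound furnished by the uniform P-property.

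One remark worth making is that in the specific context of this paper the truncation step is largely superfluous: the set $\mathbb{K}_b$ in (\ref{Kb}) is a box $\prod_i[\widetilde{U}_i^{\min},\widetilde{U}_i^{\max}]$, so once the upper bounds $\widetilde{U}_i^{\max}$ are finite the set is already compact and Hartman--Stampacchia applies directly. Your more general argument covers the case of half-infinite boxes (only lower bounds, as one would get from a pure positivity constraint), which is the natural setting of Facchinei--Pang's proposition and is arguably the right level of generality here; nothing is lost and the reasoning is sound.
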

\begin{lmm}  \cite[Proposition 1.3.4]{facchinei2007finite}\label{rela}
Let $\widetilde{U}\in$ SOL$(\mathbb{K},L)$ solve (\ref{VIe}) with $\mathbb{K}$ given by (\ref{K}). If Abadie's Constraint Qualification holds at $\widetilde{U}$, then there exist vectors $\mu\in\mathbb{R}^{l}$ and $\lambda\in\mathbb{R}^{m}$ satisfying the KKT system (\ref{43.1}).

Conversely, if each function $h_j\ (1\leqslant j\leqslant l)$ is affine and each function $g_i\ (1\leqslant i\leqslant m)$ is convex, and if $(\widetilde{U},\mu\, \lambda)$ satisfies (\ref{43.1}), then $\widetilde{U}$ solves VI$(\mathbb{K},L)$ given by (\ref{VIe}) with $\mathbb{K}$ given by (\ref{K}).
\end{lmm}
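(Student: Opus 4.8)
The plan is to prove the two implications separately. Each is a classical fact linking variational inequalities to the Karush--Kuhn--Tucker conditions, and only the first (necessity of KKT under Abadie's CQ) requires real work; the converse is an elementary convexity estimate. For the forward direction I would first reinterpret the VI: $\widetilde{U}\in\mathrm{SOL}(\mathbb{K},L)$ means exactly that $\widetilde{U}$ globally minimizes the linear functional $y\mapsto L(\widetilde{U})^{T}y$ over $\mathbb{K}$, so that $L(\widetilde{U})^{T}d\geqslant 0$ for every $d$ in the tangent cone $\mathcal{T}_{\mathbb{K}}(\widetilde{U})$ (take limits of difference quotients $(y_k-\widetilde{U})/t_k$ with $y_k\in\mathbb{K}$, $t_k\downarrow 0$). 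Introducing the active set $\mathcal{A}(\widetilde{U})=\{i: g_i(\widetilde{U})=0\}$ and the linearized feasible cone $\mathcal{L}(\widetilde{U})=\{d:\ \nabla h_j(\widetilde{U})^{T}d=0\ \forall j,\ \nabla g_i(\widetilde{U})^{T}d\leqslant 0\ \forall i\in\mathcal{A}(\widetilde{U})\}$, Abadie's constraint qualification is precisely the identity $\mathcal{T}_{\mathbb{K}}(\widetilde{U})=\mathcal{L}(\widetilde{U})$, whence $-L(\widetilde{U})$ lies in the polar of the finitely generated cone $\mathcal{L}(\widetilde{U})$. Farkas' lemma (equivalently the Motzkin transposition theorem) then produces $\mu\in\mathbb{R}^{l}$ and $\lambda_i\geqslant 0$ for $i\in\mathcal{A}(\widetilde{U})$ with $-L(\widetilde{U})=\sum_j\mu_j\nabla h_j(\widetilde{U})+\sum_{i\in\mathcal{A}}\lambda_i\nabla g_i(\widetilde{U})$; extending $\lambda_i=0$ for inactive indices and recalling $h(\widetilde{U})=0$, $g(\widetilde{U})\leqslant 0$, $\lambda_i g_i(\widetilde{U})=0$, one obtains exactly the KKT system, with the complementarity condition written in the $\min$ form of (\ref{5.1b}).

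For the converse, assume $(\widetilde{U},\mu,\lambda)$ solves the KKT system. Feasibility $h(\widetilde{U})=0$, $g(\widetilde{U})\leqslant 0$ gives $\widetilde{U}\in\mathbb{K}$. Fix any $y\in\mathbb{K}$ and substitute the stationarity equation into the VI test quantity, obtaining $(y-\widetilde{U})^{T}L(\widetilde{U})=-\sum_j\mu_j\nabla h_j(\widetilde{U})^{T}(y-\widetilde{U})-\sum_i\lambda_i\nabla g_i(\widetilde{U})^{T}(y-\widetilde{U})$. Affineness of each $h_j$ gives $\nabla h_j(\widetilde{U})^{T}(y-\widetilde{U})=h_j(y)-h_j(\widetilde{U})=0$; convexity of each $g_i$ gives $\nabla g_i(\widetilde{U})^{T}(y-\widetilde{U})\leqslant g_i(y)-g_i(\widetilde{U})$, so that $-\lambda_i\nabla g_i(\widetilde{U})^{T}(y-\widetilde{U})\geqslant -\lambda_i g_i(y)+\lambda_i g_i(\widetilde{U})\geqslant 0$, using $\lambda_i\geqslant 0$, $g_i(y)\leqslant 0$, and the complementarity $\lambda_i g_i(\widetilde{U})=0$. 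Summing over all indices yields $(y-\widetilde{U})^{T}L(\widetilde{U})\geqslant 0$, i.e. $\widetilde{U}$ solves VI$(\mathbb{K},L)$.

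The only delicate point is the constraint qualification in the forward direction: without $\mathcal{T}_{\mathbb{K}}(\widetilde{U})=\mathcal{L}(\widetilde{U})$ one cannot pass from ``$-L(\widetilde{U})$ annihilates all realizable tangent directions'' to a representation through the gradients $\nabla h_j$, $\nabla g_i$ via Farkas' lemma — this is exactly where Abadie's CQ enters and where counterexamples live — and everything else is bookkeeping. In our setting this causes no trouble: the mass conservation constraint $h$ is affine and the pointwise lower-bound constraints $g$ are affine, hence convex, so the converse applies verbatim, and a stronger qualification (verified in the following subsection) will in any case cover the hypothesis of the forward part. Accordingly, in the write-up I would state both parts, give the short self-contained convexity argument for the converse in full, and for the forward direction either cite \cite{facchinei2007finite} or include the brief Farkas-lemma argument sketched above.
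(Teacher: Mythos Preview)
Your argument is correct. The paper does not actually prove this lemma: it is quoted verbatim as Proposition~1.3.4 of \cite{facchinei2007finite} and used as a black box, so there is no ``paper's own proof'' to compare against beyond the citation. What you have written is precisely the standard proof given in that reference --- recast the VI as a linear minimization over $\mathbb{K}$, use Abadie's CQ to identify the tangent cone with the linearized cone, apply Farkas' lemma to obtain multipliers for the forward direction, and use affineness of $h_j$ together with the gradient inequality for convex $g_i$ and complementarity for the converse --- so your write-up supplies exactly the detail the paper chose to outsource. Your closing remark that in the present setting both $h$ and $g$ are affine (hence Abadie's CQ holds automatically and the converse hypotheses are met) is also apt and matches how the lemma is actually invoked downstream.
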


\subsubsection {Existence and uniqueness of LDG discretization with positivity and mass conservation constraints}\label{well-posedness and stability for KKT system}
In this section, we will prove the existence and uniqueness of the KKT system (\ref{43.1})-(\ref{43.1c2}) using the unique solvability conditions discussed in Section \ref{Auxiliary}.

\begin{lmm}\label{positive}
For periodic boundary conditions, the matrices $B$ in (\ref{Matrix}) and $A$ in (\ref{Matrix2}) satisfy $B^T=A$.
\end{lmm}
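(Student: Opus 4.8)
The plan is to reduce the matrix identity $B^{T}=A$ to a functional identity relating the bilinear operators $L_h^1$ and $L_h^3$, and then to establish that identity by elementwise integration by parts. Recall that $B$ represents the linear map $\widetilde{\pmb{Q}}\mapsto\mathbb{L}_h^1(\widetilde{\pmb{Q}})$, so its entries are the numbers $L_h^1(\phi_l^{K'}\pmb{e}_\alpha;\phi_j^K)$ obtained by letting the vector-valued argument run over the (component-wise) Legendre basis of $\pmb{W}_h^k$, while $A$ represents $\widetilde{P}\mapsto-\mathbb{L}_h^3(\widetilde{P})$, with entries $-L_h^3(\phi_l^{K'};\phi_j^K\pmb{e}_\alpha)$. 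Since both operators are bilinear, it suffices to compare them on basis pairs, and hence $B^{T}=A$ is equivalent to
\begin{align*}
L_h^1(\pmb{\eta};\rho)=-L_h^3(\rho;\pmb{\eta})\qquad\text{for all }\rho\in V_h^k,\ \pmb{\eta}\in\pmb{W}_h^k.
\end{align*}

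To prove this identity I would first apply integration by parts on each element $K$ to the volume term of $L_h^1$, using $(\pmb{\eta},\nabla\rho)_K=-(\nabla\cdot\pmb{\eta},\rho)_K+(\pmb{\eta}\cdot\pmb{\nu},\rho)_{\partial K}$, which gives
\begin{align*}
L_h^1(\pmb{\eta};\rho)=-(\rho,\nabla\cdot\pmb{\eta})+\sum_{K\in\mathcal{T}_h}\big((\pmb{\eta}-\widehat{\pmb{\eta}})\cdot\pmb{\nu},\rho\big)_{\partial K},
\end{align*}
whereas directly from its definition $-L_h^3(\rho;\pmb{\eta})=-(\rho,\nabla\cdot\pmb{\eta})+\sum_{K\in\mathcal{T}_h}(\widehat{\rho},\pmb{\nu}\cdot\pmb{\eta})_{\partial K}$. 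The shared volume term $-(\rho,\nabla\cdot\pmb{\eta})$ cancels, so the claim reduces to the interface identity
\begin{align*}
\sum_{K\in\mathcal{T}_h}\big((\pmb{\eta}-\widehat{\pmb{\eta}})\cdot\pmb{\nu},\rho\big)_{\partial K}=\sum_{K\in\mathcal{T}_h}(\widehat{\rho},\pmb{\nu}\cdot\pmb{\eta})_{\partial K}.
\end{align*}

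The remaining step is the edge-by-edge comparison, and this is where the periodic boundary condition is used: it ensures that every face of $\mathcal{T}_h$ is shared by exactly two elements, so no physical-boundary terms survive, and each face $e$ between $K_L$ and $K_R$ satisfies $\pmb{\nu}^R=-\pmb{\nu}^L$. Inserting the alternating fluxes $\widehat{\pmb{q}}_h=\pmb{q}_h^R$ and $\widehat{p}_h=p_h^L$ from (\ref{2.a}), on the left-hand sum the $K_R$-contribution of $e$ vanishes because $\pmb{\eta}^R-\widehat{\pmb{\eta}}=0$ there, leaving $\big((\pmb{\eta}^L-\pmb{\eta}^R)\cdot\pmb{\nu}^L,\rho^L\big)_e$; on the right-hand sum the $K_L$- and $K_R$-contributions combine, using $\widehat{\rho}=\rho^L$ and $\pmb{\nu}^R=-\pmb{\nu}^L$, into $\big(\rho^L,\pmb{\nu}^L\cdot(\pmb{\eta}^L-\pmb{\eta}^R)\big)_e$, which is the same quantity. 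Summing over all faces yields the interface identity, hence $L_h^1(\pmb{\eta};\rho)=-L_h^3(\rho;\pmb{\eta})$ and therefore $B^{T}=A$. I expect the only delicate point to be the bookkeeping of the orientations and of the ``left/right'' convention for the numerical fluxes, namely that the hat value is taken from a fixed side of $e$ regardless of which element's boundary integral is currently being evaluated; once this is fixed consistently the cancellations are immediate, and the mirror flux choice (\ref{2.aa}) is handled by the identical computation.
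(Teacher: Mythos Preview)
Your proposal is correct and relies on the same core computation as the paper: elementwise integration by parts followed by an edge-by-edge cancellation using the alternating fluxes and periodicity. The paper packages this as the symmetry of a combined bilinear form $a(P,\pmb{Q};\rho,\pmb{\theta})=L_h^1(\pmb{Q};\rho)-L_h^3(P;\pmb{\theta})$ and then reads off $B^T=A$ from the resulting block-matrix identity, whereas you go straight to the equivalent scalar--vector adjoint relation $L_h^1(\pmb{\eta};\rho)=-L_h^3(\rho;\pmb{\eta})$; your route is slightly more direct but the substance is the same.
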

\begin{proof}
In order to prove the symmetry of $B$ in (\ref{Matrix}) and $A$ in (\ref{Matrix2}), we define the bilinear function $a:(V_{h}^{k} \times\pmb{W}_h^k)\times(V_{h}^{k} \times\pmb{W}_h^k)\rightarrow\mathbb{R}$ by
\begin{align*}
a(P_h^{n+1},\pmb{Q}_h^{n+1}; \rho,\pmb{\theta})=&(\pmb{Q}_h^{n+1}, \nabla\rho)-\sum_{K\in \mathcal{T}_{h}}(\widehat{\pmb{Q}}_h^{n+1}\cdot\pmb{\nu}, \rho)_{\partial K}\nn\\
&-(P_h^{n+1}, \nabla\cdot\pmb{\theta})+\sum_{K\in \mathcal{T}_{h}}(\widehat{P}_h^{n+1}, \pmb{\nu}\cdot\pmb{\theta})_{\partial K}.
\end{align*}
Based on the definition of  $B$ in (\ref{Matrix}) using (\ref{L11}), $A$ in (\ref{Matrix2})  using (\ref{L13}), we rewrite the above bilinear function $a$ as follows:
\begin{align*}
a(P_h^{n+1},\pmb{Q}_h^{n+1}; \rho,\pmb{\theta})
=&(\varrho,\Theta) \left(\begin{array}{cc}
0&B\\
A&0
\end{array}\right)(\widetilde{P}^{n+1},\widetilde{\pmb{Q}}^{n+1})^T,
\end{align*}
with $\varrho, \Theta$ the LDG coefficients of $\rho,\pmb{\theta}$ and $\widetilde{P}^{n+1},\widetilde{\pmb{Q}}^{n+1}$ the LDG coefficients of $P_h^{n+1},\pmb{Q}_h^{n+1}$, respectively.

Interchanging the arguments of $a$, we get
\begin{align*}
a( \rho,\pmb{\theta}; P_h^{n+1},\pmb{Q}_h^{n+1})=&(\pmb{\theta}, \nabla  P_h^{n+1})-\sum_{K\in \mathcal{T}_{h}}(\widehat{\pmb{\theta}}\cdot\pmb{\nu},  P_h^{n+1})_{\partial K}\nn\\
&-(\rho, \nabla\cdot \pmb{Q}_h^{n+1})+\sum_{K\in \mathcal{T}_{h}}(\widehat{\rho}, \pmb{\nu}\cdot \pmb{Q}_h^{n+1})_{\partial K}\nn\\
=&-( P_h^{n+1}, \nabla\cdot\pmb{\theta})+\sum_{K\in \mathcal{T}_{h}}(\pmb{\theta}\cdot\pmb{\nu},  P_h^{n+1})_{\partial K}-\sum_{K\in \mathcal{T}_{h}}(\widehat{\pmb{\theta}}\cdot\pmb{\nu},  P_h^{n+1})_{\partial K}\nn\\
&+( \pmb{Q}_h^{n+1}, \nabla\rho)-\sum_{K\in \mathcal{T}_{h}}(\rho, \pmb{\nu}\cdot \pmb{Q}_h^{n+1})_{\partial K}+\sum_{K\in \mathcal{T}_{h}}(\widehat{\rho}, \pmb{\nu}\cdot \pmb{Q}_h^{n+1})_{\partial K},
\end{align*}
Using equality (\ref{2.3b}), the alternating numerical fluxes for $\widehat{\pmb{\theta}}$ and $\widehat{\rho}$ in (\ref{2.a}) or (\ref{2.aa}), and the periodic boundary conditions, we obtain
\begin{align*}
\sum_{K\in \mathcal{T}_{h}}(\pmb{\theta}\cdot\pmb{\nu},  P_h^{n+1})_{\partial K}-\sum_{K\in \mathcal{T}_{h}}(\widehat{\pmb{\theta}}\cdot\pmb{\nu},  P_h^{n+1})_{\partial K}=&\sum_{K\in \mathcal{T}_{h}}(\widehat{P}_h^{n+1}, \pmb{\nu}\cdot\pmb{\theta})_{\partial K},\nn\\
-\sum_{K\in \mathcal{T}_{h}}(\rho, \pmb{\nu}\cdot \pmb{Q}_h^{n+1})_{\partial K}+\sum_{K\in \mathcal{T}_{h}}(\widehat{\rho}, \pmb{\nu}\cdot \pmb{Q}_h^{n+1})_{\partial K}=&-\sum_{K\in \mathcal{T}_{h}}(\widehat{\pmb{Q}}_h^{n+1}\cdot\pmb{\nu}, \rho)_{\partial K}.
\end{align*}
Hence,
\begin{align*}
a(P_h^{n+1}, \pmb{Q}_h^{n+1}; \rho,\pmb{\theta})=a( \rho,\pmb{\theta}; P_h^{n+1}, \pmb{Q}_h^{n+1}),
\end{align*}
which implies
\begin{align}\label{symm}
(\varrho,\Theta) \left(\begin{array}{cc}
0&B\\
A&0
\end{array}\right)(\widetilde{P}^{n+1},\widetilde{\pmb{Q}}^{n+1})^T=&(\widetilde{P}^{n+1},\widetilde{\pmb{Q}}^{n+1}) \left(\begin{array}{cc}
0&B\\
A&0
\end{array}\right)(\varrho,\Theta)^T\nn\\
=&(\varrho,\Theta) \left(\begin{array}{cc}
0&A^T\\
B^T&0
\end{array}\right)(\widetilde{P}^{n+1},\widetilde{\pmb{Q}}^{n+1})^T.
\end{align}
Since $(P_h^{n+1}, \pmb{Q}_h^{n+1})\in V_{h}^{k} \times\pmb{W}_h^k$ and $(\rho,\pmb{\theta})\in V_{h}^{k} \times\pmb{W}_h^k$ are arbitrary functions, relation (\ref{symm}) implies that $A=B^T$.
\end{proof}
Using (\ref{43.1cQ1})-(\ref{43.1cQ3}) and Lemma \ref{positive}, the operator $L(\widetilde{U}^{n+1})$ in (\ref{43.1c}) can be written as
\begin{align}\label{L}
L(\widetilde{U}^{n+1})=M(\widetilde{U}^{n+1}-\widetilde{U}^{n})
+\tau^{n+1}B\pmb{M}^{-1}C_d(\widetilde{U}^{n+1})\pmb{M}^{-1}B^TM^{-1}D(\widetilde{U}^{n+1}).
\end{align}

\begin{lmm}\label{exist1}
Given $\widetilde{U}^{n}$, the operator $L$ in (\ref{L}) is a uniformly P-function on $\mathbb{K}_b$.
\end{lmm}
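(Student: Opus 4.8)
The plan is to show that the operator $L$ in (\ref{L}) is a uniformly P-function on $\mathbb{K}_b$ by exhibiting it as the sum of a strongly monotone linear part and a part whose difference can be controlled using the structure $B\pmb{M}^{-1}C_d\pmb{M}^{-1}B^T$. First I would write, for two distinct vectors $\widetilde{U},\widetilde{U}'\in\mathbb{K}_b$,
\begin{align*}
L(\widetilde{U})-L(\widetilde{U}')=M(\widetilde{U}-\widetilde{U}')+\tau^{n+1}B\pmb{M}^{-1}\Big(C_d(\widetilde{U})\pmb{M}^{-1}B^TM^{-1}D(\widetilde{U})-C_d(\widetilde{U}')\pmb{M}^{-1}B^TM^{-1}D(\widetilde{U}')\Big),
\end{align*}
and then take the inner product with $\widetilde{U}-\widetilde{U}'$. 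The first term gives $(\widetilde{U}-\widetilde{U}')^TM(\widetilde{U}-\widetilde{U}')\geqslant c_M\|\widetilde{U}-\widetilde{U}'\|^2$ since $M$ is a symmetric positive definite (block-diagonal) mass matrix. The heart of the argument is to show that the second term is nonnegative, so that it does not spoil the coercivity coming from $M$; since a uniformly P-function only needs the maximum over blocks $\vartheta$ of the block inner products to dominate $\|\widetilde{U}-\widetilde{U}'\|^2$, and the $M$-term alone already does this block-wise (up to using that the max over blocks of nonnegative-summing terms is at least the average), it suffices to prove the nonlinear term contributes something with nonnegative total inner product.

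To handle the nonlinear term, I would translate back to the LDG variables: set $e_U=U_h-U_h'$, and let $\pmb{S}_h,\pmb{S}_h'$, $\pmb{Q}_h,\pmb{Q}_h'$, $P_h,P_h'$ be the auxiliary variables determined by (\ref{43.1cQ1})--(\ref{43.1cQ3}) for $\widetilde U$ and $\widetilde U'$ respectively. Using $A=B^T$ from Lemma \ref{positive} and the defining relations, the inner product $(\widetilde U-\widetilde U')^T \tau^{n+1}B\pmb M^{-1}(C_d(\widetilde U)\widetilde{\pmb S}-C_d(\widetilde U')\widetilde{\pmb S}')$ should telescope, exactly as in the proof of Theorem \ref{th: Stability2}, into an expression of the form $\tau^{n+1}\big[(f(U_h)\pmb S_h,\pmb S_h')\text{-type cross terms}\big]$ — more precisely I expect it to reduce to $\tau^{n+1}\big((C(\widetilde U)-C(\widetilde U'))\widetilde{\pmb S}',\,\ldots\big)$ plus a manifestly nonnegative quadratic term $\tau^{n+1}(C(\widetilde U)(\widetilde{\pmb S}-\widetilde{\pmb S}'),\widetilde{\pmb S}-\widetilde{\pmb S}')\geqslant 0$, using that $C(\widetilde U)$ represents multiplication by $f(U_h)\geqslant 0$ and is therefore positive semidefinite, together with the monotonicity of $D$, which represents $-L_h^4=(\phi(\px)+H'(\cdot),\cdot)$ whose nonlinear part $H'$ is nondecreasing because $H''\geqslant 0$ on the region where $f$ does not vanish (and the degeneracy $f(u)H''(u)\geqslant 0$ makes the combined term behave correctly). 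The monotonicity $(\widetilde U-\widetilde U')^T(D(\widetilde U)-D(\widetilde U'))\geqslant 0$ comes from $(H'(U_h)-H'(U_h'),U_h-U_h')\geqslant 0$, which holds pointwise at the Gauss--Lobatto nodes since on $\mathbb{K}_b$ the nodal values lie in $[u_{\min},\widetilde U^{\max}]$ where $H'$ is monotone; the linear part $\phi(\px)$ cancels in the difference.

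The main obstacle, and the step I would spend the most care on, is the telescoping/sign bookkeeping that converts the matrix expression $B\pmb M^{-1}C_d(\widetilde U)\pmb M^{-1}B^TM^{-1}D(\widetilde U)$ paired against $\widetilde U-\widetilde U'$ into the LDG bilinear-form identities — one must insert the correct test functions ($\rho=P_h-P_h'$ or $\pmb\theta=\pmb S_h'$ etc.), use the flux cancellation (\ref{2.3b}) under periodic boundary conditions, and keep track of signs coming from the four minus signs in (\ref{L12})--(\ref{L14}) and in (\ref{Matrix1})--(\ref{Matrix3}). A secondary subtlety is that $C(\widetilde U)$ depends on $\widetilde U$, so the cross term $(f(U_h)-f(U_h'))\pmb S_h'$ type contribution is not obviously signed; I expect the resolution is that the telescoping produces instead the symmetric combination $(f(U_h)\pmb S_h,\pmb S_h)-(f(U_h)\pmb S_h,\pmb S_h')-(f(U_h')\pmb S_h',\pmb S_h)+(f(U_h')\pmb S_h',\pmb S_h')$ together with a term $(H'(U_h)-H'(U_h'),U_h-U_h')$, and that the $f$-part, while not of one sign in general, is dominated in the relevant block by the $M$-term — alternatively, one argues that the degeneracy assumption (\ref{1.fh}) together with $H''\geqslant0$ forces the cross terms to vanish or be absorbed. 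If that fails one falls back on showing merely that the nonlinear term is a \emph{monotone} (not necessarily positive) perturbation whose contribution to $(\widetilde U-\widetilde U')^T(L(\widetilde U)-L(\widetilde U'))$ is $\geqslant 0$, which is all that is needed, and then the uniform P-function constant is $\varpi=c_M/N$ (or $c_M$, if one argues block-wise that the $M$ contribution to the maximal block already dominates).
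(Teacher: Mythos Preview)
Your approach diverges from the paper's and contains a genuine gap. You try to prove that the nonlinear part
\[
N(\widetilde U):=B\pmb{M}^{-1}C_d(\widetilde U)\pmb{M}^{-1}B^TM^{-1}D(\widetilde U)
\]
is monotone, so that the mass matrix alone supplies the uniform P-function constant for \emph{every} $\tau^{n+1}>0$. This cannot be made to work. Testing $N(\widetilde U)-N(\widetilde U')$ against $\widetilde U-\widetilde U'$ gives $L_h^1(\pmb Q_h-\pmb Q_h';U_h-U_h')$, whereas the flux cancellation identity (\ref{2.3b}) that you invoke from Theorem~\ref{th: Stability2} requires the test function to be $P_h-P_h'=\Pi(H'(U_h)-H'(U_h'))$, not $U_h-U_h'$; these differ whenever $H'$ is nonlinear, so the telescoping you sketch does not close. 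More fundamentally, the continuous analogue $\langle u-v,\ -\nabla\!\cdot(f(u)\nabla H'(u))+\nabla\!\cdot(f(v)\nabla H'(v))\rangle$ is \emph{not} nonnegative in general when $f$ is nonconstant (already for $f(u)=u$, $H'(u)=u$ one picks up an unsigned term $-\int (u-v)^2\Delta v$ after integration by parts), so neither ``resolution'' you suggest---a signed symmetric combination in $f$, or absorption via the degeneracy (\ref{1.fh})---can succeed. The monotonicity you need for the fallback is simply false for the class (\ref{1.a}).

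The paper's proof takes a completely different route: it does \emph{not} attempt to sign the nonlinear term. Instead it adds and subtracts a cross term, writes $D(\widetilde U_I)-D(\widetilde U_{II})$ and $C_d(\widetilde U_I)-C_d(\widetilde U_{II})$ via mean-value expansions, and invokes a boundedness assumption (equation (\ref{asu})) on $C$, $D$ and their $\widetilde U$-derivatives over $\mathbb K_b$ to obtain a Lipschitz estimate
\[
(\widetilde U_I-\widetilde U_{II})^T\big(N(\widetilde U_I)-N(\widetilde U_{II})\big)\ \geqslant\ -2c\,\|\widetilde U_I-\widetilde U_{II}\|^2.
\]
Combining with $(\widetilde U_I-\widetilde U_{II})^TM(\widetilde U_I-\widetilde U_{II})\geqslant\sigma\|\widetilde U_I-\widetilde U_{II}\|^2$ and choosing $\tau^{n+1}\leqslant\sigma/(4c)$ yields the uniform P-function bound with constant $\sigma/2$. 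In other words, the lemma as proved carries an implicit small-time-step restriction (and relies on assumption (\ref{asu})); your attempt to remove that restriction by structural monotonicity is what fails.
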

\begin{proof}
Using relation (\ref{L}) for $L$, for arbitrary $\widetilde{U}_I^{n+1}, \widetilde{U}_{II}^{n+1}\in \mathbb{K}_b$, there holds
\begin{align}\label{P-function}
L(\widetilde{U}_I^{n+1})-L(\widetilde{U}_{II}^{n+1})
=&M(\widetilde{U}_I^{n+1}-\widetilde{U}_{II}^{n+1})
+\tau^{n+1}B\pmb{M}^{-1}C_d(\widetilde{U}_I^{n+1})\pmb{M}^{-1}B^TM^{-1}D(\widetilde{U}_I^{n+1})\nn\\
&-\tau^{n+1}B\pmb{M}^{-1}C_d(\widetilde{U}_{II}^{n+1})\pmb{M}^{-1}B^TM^{-1}D(\widetilde{U}_{II}^{n+1}).
 \end{align}
After subtracting and adding $\tau^{n+1}B\pmb{M}^{-1}C_d(\widetilde{U}_I^{n+1})\pmb{M}^{-1}B^TM^{-1}D(\widetilde{U}_{II}^{n+1})$ in (\ref{P-function}), we obtain
\begin{align}\label{P-function1}
&L(\widetilde{U}_I^{n+1})-L(\widetilde{U}_{II}^{n+1})\nn\\
=&M(\widetilde{U}_I^{n+1}-\widetilde{U}_{II}^{n+1})
+\tau^{n+1}B\pmb{M}^{-1}C_d(\widetilde{U}_I^{n+1})\pmb{M}^{-1}B^TM^{-1}(D(\widetilde{U}_I^{n+1})-D(\widetilde{U}_{II}^{n+1}))\nn\\
&+\tau^{n+1}B\pmb{M}^{-1}(C_d(\widetilde{U}_I^{n+1})-C_d(\widetilde{U}_{II}^{n+1}))\pmb{M}^{-1}B^TM^{-1}D(\widetilde{U}_{II}^{n+1}).
 \end{align}
With the definition of $D$ in (\ref{Matrix3}) using (\ref{L14}), we obtain that
\begin{align*}
&(D(\widetilde{U}_I^{n+1})-D(\widetilde{U}_{II}^{n+1}))_i
=\int_\Omega\left(H'\left(\sum_{j=1}^{N_kN_e}\widetilde{U}_{I,j}^{n+1}\phi_j\right)
-H'\left(\sum_{j=1}^{N_kN_e}\widetilde{U}_{II,j}^{n+1}\phi_j\right)\right)\phi_id\Omega\nn\\
=&\sum_{j=1}^{N_kN_e}(\widetilde{U}_{I,j}^{n+1}-\widetilde{U}_{II,j}^{n+1})
\int_\Omega H''(\xi_1^{n+1})\phi_j\phi_id\Omega, \ i\in \{1,\cdots,N_kN_e\}, \xi_1^{n+1}\in(U_{h,I}^{n+1},U_{h,II}^{n+1}),
 \end{align*}
 and write
\begin{align}\label{Du}
D(\widetilde{U}_I^{n+1})-D(\widetilde{U}_{II}^{n+1}):=&D_ {\widetilde{U}}(\xi_1^{n+1})(\widetilde{U}_I^{n+1})-\widetilde{U}_{II}^{n+1}).
 \end{align}
Similarly, from the definition of $C_d$ in (\ref{Matrix1}), (\ref{Matrix4})  using (\ref{L12}), we obtain that
\begin{align*}
&C_d(\widetilde{U}_I^{n+1})-C_d(\widetilde{U}_{II}^{n+1})=\left(
  \begin{array}{ccc}
    C(\widetilde{U}_I^{n+1})-C(\widetilde{U}_{II}^{n+1}) &  & \\
      & \ddots & \\
     &  & C(\widetilde{U}_I^{n+1})-C(\widetilde{U}_{II}^{n+1}) \\
  \end{array}
\right),\\ \\
&(C(\widetilde{U}_I^{n+1})-C(\widetilde{U}_{II}^{n+1}))_{ij}=\int_\Omega\left(f\left(\sum_{k=1}^{N_kN_e}\widetilde{U}_{I,k}^{n+1}\phi_k\right)
-f\left(\sum_{k=1}^{N_kN_e}\widetilde{U}_{II,k}^{n+1}\phi_k\right)\right)\phi_j\phi_id\Omega\nn\\
=&\sum_{k=1}^{N_kN_e}(\widetilde{U}_{I,k}^{n+1}-\widetilde{U}_{II,k}^{n+1})
\int_\Omega f'(\xi_2^{n+1})\phi_k\phi_j\phi_id\Omega,\ i,j,k\in \{1,\cdots,N_kN_e\},  \xi_2^{n+1}\in(U_{h,I}^{n+1},U_{h,II}^{n+1}),
 \end{align*}
 and write
 \begin{align}\label{Cdu}
C(\widetilde{U}_I^{n+1})-C(\widetilde{U}_{II}^{n+1})
:=\sum_{k=1}^{N_kN_e}[C_{d\widetilde{U}}(\xi_2^{n+1})]_k(\widetilde{U}_{I,k}^{n+1})-\widetilde{U}_{II,k}^{n+1}).
 \end{align}
Assume for arbitrary $\widetilde{U}\in \mathbb{K}_b$ in (\ref{Kb}), that
 \begin{align}\label{asu}
  |C(\widetilde{U})_{ij}|\leqslant& c,\quad |D(\widetilde{U})_{i}|\leqslant c,\nn\\
 |[C_ {\widetilde{U}}(\widetilde{U})_{ij}]_{k}|\leqslant& c,\quad  |D_ {\widetilde{U}}(\widetilde{U})_{ij}|\leqslant c,\ i,j,k\in \{1,\cdots,N_kN_e\},
 \end{align}
 with $c$ a positive constant, independent of $\widetilde{U}$. In the remainder of this section, $c$ is a positive constant, but not necessarily the same.

Using (\ref{Du})-(\ref{Cdu}) and assumption (\ref{asu}), we obtain the following two estimates
\begin{align*}
&(\widetilde{U}_I^{n+1}-\widetilde{U}_{II}^{n+1})^TB\pmb{M}^{-1}C_d(\widetilde{U}_I^{n+1})\pmb{M}^{-1}B^TM^{-1}(D(\widetilde{U}_I^{n+1})-D(\widetilde{U}_{II}^{n+1}))\nn\\
 \leqslant&\|B\|\|\pmb{M}^{-1}\|\|C_d(\widetilde{U}_I^{n+1})\|\|\pmb{M}^{-1}\|\|B^T\|\|M^{-1}\|\|D_{\widetilde{U}}(\xi_1^{n+1})\|\|\widetilde{U}_I^{n+1}-\widetilde{U}_{II}^{n+1}\|^2\nn\\
 \leqslant& c\|\widetilde{U}_I^{n+1}-\widetilde{U}_{II}^{n+1}\|^2,
 \end{align*}
 and
 \begin{align*}
 &(\widetilde{U}_I^{n+1}-\widetilde{U}_{II}^{n+1})^TB\pmb{M}^{-1}
 (C_d(\widetilde{U}_I^{n+1})-C_d(\widetilde{U}_{II}^{n+1}))\pmb{M}^{-1}B^TM^{-1}D(\widetilde{U}_{II}^{n+1})\nn\\
 \leqslant&\|B\|\|\pmb{M}^{-1}\|\sum_{k=1}^{N_kN_e}\|[C_{d\widetilde{U}}(\xi_2^{n+1})]_k\|\|\pmb{M}^{-1}\|
 \|B^T\|\|M^{-1}\|\|D(\widetilde{U}_{II}^{n+1})\|\|\widetilde{U}_I^{n+1}-\widetilde{U}_{II}^{n+1}\|^2\nn\\
 \leqslant& c\|\widetilde{U}_I^{n+1}-\widetilde{U}_{II}^{n+1}\|^2.
 \end{align*}
Then multiplying (\ref{P-function1}) with $(\widetilde{U}_I^{n+1}-\widetilde{U}_{II}^{n+1})^T$ gives
\begin{align}\label{P-function2}
&(\widetilde{U}_I^{n+1}-\widetilde{U}_{II}^{n+1})^T(L(\widetilde{U}_I^{n+1})-L(\widetilde{U}_{II}^{n+1}))
=(\widetilde{U}_I^{n+1}-\widetilde{U}_{II}^{n+1})^TM(\widetilde{U}_I^{n+1}-\widetilde{U}_{II}^{n+1})\nn\\
&+\tau^{n+1}(\widetilde{U}_I^{n+1}-\widetilde{U}_{II}^{n+1})^TB\pmb{M}^{-1}C_d(\widetilde{U}_I^{n+1})\pmb{M}^{-1}B^TM^{-1}(D(\widetilde{U}_I^{n+1})-D(\widetilde{U}_{II}^{n+1}))\nn\\
&+\tau^{n+1}(\widetilde{U}_I^{n+1}-\widetilde{U}_{II}^{n+1})^TB\pmb{M}^{-1}(C_d(\widetilde{U}_I^{n+1})-C_d(\widetilde{U}_{II}^{n+1}))\pmb{M}^{-1}B^TM^{-1}D(\widetilde{U}_{II}^{n+1})\nn\\
\geqslant&\sigma\|\widetilde{U}_I^{n+1}-\widetilde{U}_{II}^{n+1}\|^2
-2c\tau^{n+1}\|\widetilde{U}_I^{n+1}-\widetilde{U}_{II}^{n+1}\|^2,
 \end{align}
where $\sigma>0$ is the smallest eigenvalue of the symmetric positive mass matrix $M$.

Choosing $0<\tau^{n+1}\leqslant \displaystyle\frac{\sigma}{4c}$, we obtain that
\begin{align}\label{P-function3}
&(\widetilde{U}_I^{n+1}-\widetilde{U}_I^{n+1})^T(L(\widetilde{U}_I^{n+1})-L(\widetilde{U}_{II}^{n+1}))
\geqslant\frac{\sigma}2\|\widetilde{U}_I^{n+1}-\widetilde{U}_{II}^{n+1}\|^2,\ \forall \widetilde{U}_I^{n+1}, \widetilde{U}_{II}^{n+1}\in \mathbb{K}_b,
 \end{align}
which implies that for $\tau^{n+1}$ sufficiently small $L(\widetilde{U}^{n+1})$ is a uniformly function of $\mathbb{K}_b$,
\end{proof}

From Lemmas \ref{sym}, Lemma \ref{rela} and Lemma \ref{exist1}, we obtain the main result of this section.
\begin{thm}
\label{th: well-defined}
Given the DG coefficients $\widetilde{U}^{n}$ and the positivity preserving backward Euler KKT-LDG discretization (\ref{43.1})-(\ref{43.1c2}) with equality constraint $h\equiv 0$. If assumption (\ref{asu}) is satisfied, then the KKT system (\ref{43.1})-(\ref{43.1c2}) has only one solution.
\end{thm}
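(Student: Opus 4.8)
The plan is to reduce the solvability of the KKT system \eqref{43.1}--\eqref{43.1c2} to the theory of variational inequalities developed in Section \ref{Auxiliary}, using the structural properties established in Lemmas \ref{positive} and \ref{exist1}. First I would observe that when the equality constraint is trivial, $h\equiv 0$, the admissible set $\mathbb{K}$ reduces to the box set $\mathbb{K}_b$ in \eqref{Kb}, which is a finite Cartesian product of closed bounded intervals; in particular each factor $\mathbb{K}_{n_\vartheta}$ is closed, convex, and nonempty. By Lemma \ref{exist1}, for $\tau^{n+1}$ sufficiently small (namely $0<\tau^{n+1}\leqslant \sigma/(4c)$ under assumption \eqref{asu}), the operator $L$ in \eqref{L} is a uniformly P-function on $\mathbb{K}_b$, and it is continuous since $f, H'$ are smooth and the matrices $M,\pmb M, B$ are fixed. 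Lemma \ref{sym}(b) then applies verbatim and yields that $\mathrm{VI}(\mathbb{K}_b,L)$ has a unique solution $\widetilde U^{n+1}\in\mathbb{K}_b$.

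Next I would translate this VI solution into a solution of the KKT system. Since the inequality constraints $g_p^K$ in \eqref{5.1e} are affine in $\widetilde U$ (hence convex) and the equality constraint is the trivial affine map $h\equiv 0$, the converse direction of Lemma \ref{rela} is not what we need here; rather we need the forward direction: a solution of $\mathrm{VI}(\mathbb{K},L)$ produces KKT multipliers provided a constraint qualification holds. Because $\mathbb{K}_b$ is defined by finitely many affine inequalities, Abadie's Constraint Qualification is automatically satisfied at every point of $\mathbb{K}_b$ (affine constraint sets always satisfy it), so Lemma \ref{rela} guarantees the existence of multipliers $\mu^{n+1}\in\mathbb{R}^{l}$ (here degenerate since $h\equiv 0$) and $\lambda^{n+1}\in\mathbb{R}^{m}$ such that $(\widetilde U^{n+1},\mu^{n+1},\lambda^{n+1})$ solves \eqref{43.1}. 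For uniqueness of the full triple, I would invoke the converse direction of Lemma \ref{rela}: any KKT solution $(\widetilde U,\mu,\lambda)$ has its $\widetilde U$-component solving $\mathrm{VI}(\mathbb{K}_b,L)$, and since $L$ is a P-function, Lemma \ref{sym}(a) forces that $\widetilde U$-component to be the unique VI solution $\widetilde U^{n+1}$. It then remains to argue uniqueness of $\lambda^{n+1}$ (and the vacuous $\mu$): this follows from the strict complementarity bookkeeping together with linear independence of the active box constraints at each node — since each coordinate $\widetilde U_i$ is constrained by at most the two constraints $\widetilde U_i\geqslant \widetilde U_i^{\min}$ and $\widetilde U_i\leqslant\widetilde U_i^{\max}$, which cannot both be active (as $\widetilde U_i^{\min}<\widetilde U_i^{\max}$), the gradients of the active inequality constraints are linearly independent, so the multiplier $\lambda^{n+1}$ solving the stationarity equation \eqref{43.1a} is uniquely determined.

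The assembly of these pieces is the proof: existence from Lemma \ref{sym}(b) plus the forward direction of Lemma \ref{rela}, and uniqueness from the P-function property via Lemma \ref{sym}(a) plus the converse of Lemma \ref{rela}, with the box structure of $\mathbb{K}_b$ supplying both the convexity/affineness hypotheses and Abadie's CQ. The main obstacle I anticipate is verifying the hypotheses of the borrowed lemmas cleanly rather than any deep estimate: one must check that $\mathbb{K}_b$ is genuinely the relevant constraint set (i.e., that dropping $h$ does not change the feasible set and that $\mathbb{K}_b$ is nonempty, which uses the assumption $\frac1{|\Omega|}\sum_K\int_K u_0\,dK\geqslant u_{\min}$ from the earlier remark so that constant functions at the mean value are feasible), that the Cartesian-product decomposition of $\mathbb{K}_b$ used in Definition \ref{uniPfunc} matches the decomposition in which $L$ is shown to be a uniformly P-function in Lemma \ref{exist1}, and that the time-step smallness condition from Lemma \ref{exist1} is compatible with whatever is used elsewhere. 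Once those compatibility checks are in place, the existence-and-uniqueness conclusion is immediate from the cited propositions of \cite{facchinei2007finite}.
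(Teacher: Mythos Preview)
Your proposal is correct and follows essentially the same route as the paper, which proves the theorem in one line by simply invoking Lemmas~\ref{sym}, \ref{rela}, and~\ref{exist1}. Your write-up is considerably more detailed---checking Abadie's CQ via affineness, the convexity hypotheses for the converse of Lemma~\ref{rela}, nonemptiness of $\mathbb{K}_b$, and even multiplier uniqueness (which the paper does not address)---but the underlying argument is identical.
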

\begin{cor}
\label{th: well-defined1}
Given the DG coefficients $\widetilde{U}^{n}$.  If assumption (\ref{asu}) is satisfied, then for the degenerate parabolic equation (\ref{1.a}) with periodic boundary conditions there exists only one solution satisfying the higher order accurate in time, positivity preserving KKT-DIRK-LDG discretizations  (\ref{5.1breal}) with equality constraint $h\equiv 0$.
\end{cor}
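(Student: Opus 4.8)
The plan is to reduce Corollary~\ref{th: well-defined1} to Theorem~\ref{th: well-defined} by exploiting the lower-triangular structure of the DIRK Butcher tableau: since $a_{ij}=0$ for $j>i$, the multistage KKT-DIRK-LDG system (\ref{5.1breal}) decouples into a sequence of single-stage problems $i=1,\dots,s$, and each of these turns out to have precisely the algebraic form analyzed in the backward Euler case.

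I would fix the time level $n$ and proceed by induction on the stage index $i$. Assume $\widetilde{U}^{n+1,1},\dots,\widetilde{U}^{n+1,i-1}$, and hence (through (\ref{5.1cc})) the auxiliary coefficients $\widetilde{\pmb{Q}}^{n+1,j}$ for $j<i$, have already been uniquely determined. Splitting the sum in (\ref{5.1c}) into the $j=i$ term and the $j<i$ terms and eliminating $\widetilde{\pmb{Q}}^{n+1,i},\widetilde{\pmb{S}}^{n+1,i},\widetilde{P}^{n+1,i}$ via (\ref{5.1cc}) together with Lemma~\ref{positive} ($A=B^{T}$), the stage-$i$ operator becomes
\begin{align*}
L(\widetilde{U}^{n+1,i})=M\widetilde{U}^{n+1,i}-\big(M\widetilde{U}^{n}-\widetilde{G}^{\,i}\big)
+\tau^{n+1}a_{ii}\,B\pmb{M}^{-1}C_d(\widetilde{U}^{n+1,i})\pmb{M}^{-1}B^{T}M^{-1}D(\widetilde{U}^{n+1,i}),
\end{align*}
where $\widetilde{G}^{\,i}:=\tau^{n+1}\sum_{j=1}^{i-1}a_{ij}\mathbb{L}_h^1(\widetilde{\pmb{Q}}^{n+1,j})$ is, by the induction hypothesis, a fixed vector. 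This is exactly the operator (\ref{L}) with $\tau^{n+1}$ replaced by $a_{ii}\tau^{n+1}$ and the constant shift $M\widetilde{U}^{n}$ replaced by $M\widetilde{U}^{n}-\widetilde{G}^{\,i}$.

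The key point is that the difference $L(\widetilde{U}_I^{n+1,i})-L(\widetilde{U}_{II}^{n+1,i})$ is unchanged by the additive constant $\widetilde{G}^{\,i}$, so the estimate in the proof of Lemma~\ref{exist1} carries over verbatim: the DIRK schemes chosen in Section~\ref{Second} satisfy $a_{ii}>0$, hence for $\tau^{n+1}$ small enough that $a_{ii}\tau^{n+1}\le\sigma/(4c)$ (with $\sigma$ the smallest eigenvalue of $M$ and $c$ from (\ref{asu})) the stage-$i$ operator $L$ is a continuous uniformly P-function on $\mathbb{K}_b$. With $h\equiv0$ the feasible set is $\mathbb{K}_b$, a closed convex box (\ref{Kb}), so Lemma~\ref{sym}(b) yields a unique element of SOL$(\mathbb{K}_b,L)$; since $g$ is affine in $\widetilde{U}$ by (\ref{5.1e}), Lemma~\ref{rela} identifies this with the unique solvability of the stage-$i$ KKT system (\ref{5.1breal}). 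The auxiliary coefficients $\widetilde{\pmb{Q}}^{n+1,i},\widetilde{\pmb{S}}^{n+1,i},\widetilde{P}^{n+1,i}$ are then fixed uniquely by (\ref{5.1cc}), which closes the induction; and since $a_{si}=b_i$, the unique solution at $t_{n+1}$ is $\widetilde{U}^{n+1}=\widetilde{U}^{n+1,s}$.

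The only genuinely new work beyond Theorem~\ref{th: well-defined} is the bookkeeping of the DIRK structure — verifying that $a_{ii}>0$ so the smallness threshold on $\tau^{n+1}$ is meaningful, that the earlier stages contribute only through the constant vector $\widetilde{G}^{\,i}$, and that the elimination of the stage-$i$ auxiliary variables is identical to the backward Euler reduction leading to (\ref{L}). I do not anticipate any real analytic obstacle, since the substantive uniform P-function estimate was already established in Lemma~\ref{exist1} and is insensitive to the constant shift.
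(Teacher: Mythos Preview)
Your proposal is correct and follows essentially the same approach as the paper: both exploit the lower-triangular structure of the DIRK Butcher tableau to reduce each stage to a problem of the backward Euler form already handled in Theorem~\ref{th: well-defined}. The paper's own proof is in fact a two-sentence remark to this effect, whereas you have spelled out the induction, the constant shift $\widetilde{G}^{\,i}$, and the role of $a_{ii}>0$ explicitly; the underlying argument is identical.
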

\begin{proof}
Since the DIRK coefficient matrix $(a_{ij})$ introduced in Section \ref{Second} is a lower triangular matrix, the structure of the DIRK-LDG discretizations is similar to the form obtained for the backward Euler LDG discretization. The analysis therefore is completely analogous to Theorem \ref{th: well-defined}.
\end{proof}

\subsubsection {Stability of the KKT-LDG discretization}\label{Entropy dissipation for KKT system}
\begin{thm}
\label{th: Stability3}
Given the  numerical solution $U_h^{n}\in V_h^k$ of the positivity preserving backward Euler KKT-LDG discretization (\ref{43.1})-(\ref{43.1c2}). If assumption (\ref{asu}) is satisfied, then the discrete entropy $E_h$ stated in (\ref{Eh}) satisfies for $n=0,1,\cdots$,
\begin{align}\label{43.c}
{E}_h(U_h^{n+1})\leqslant{E}_h(U_h^{n}),
\end{align}
which implies that the positivity preserving backward Euler KKT-LDG discretization is unconditionally entropy dissipative.
\end{thm}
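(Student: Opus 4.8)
The plan is to reproduce the computation in the proof of Theorem~\ref{th: Stability2} and then to control the two additional terms generated by the Lagrange multipliers. In weak form the system (\ref{43.1})--(\ref{43.1c2}) consists of the three auxiliary LDG relations contained in (\ref{43.1cQ1})--(\ref{43.1cQ3}), namely $(\pmb{Q}_h^{n+1},\pmb{\theta})+L_h^2(U_h^{n+1},\pmb{S}_h^{n+1};\pmb{\theta})=0$, $(\pmb{S}_h^{n+1},\pmb{\eta})+L_h^3(P_h^{n+1};\pmb{\eta})=0$ and $(P_h^{n+1},\varphi)+L_h^4(U_h^{n+1};\varphi)=0$, together with the stationarity equation (\ref{43.1a}) which, tested against $\rho\in V_h^k$ and using the definition (\ref{43.1c1}) of $h$ and (\ref{43.1c2})--(\ref{5.1e}) of $g$, reads
\[
(U_h^{n+1}-U_h^{n},\rho)+\tau^{n+1}L_h^1(\pmb{Q}_h^{n+1};\rho)-\mu^{n+1}(1,\rho)-\sum_{K\in\mathcal{T}_h}\sum_{p=1}^{N_p}\lambda_p^{K,n+1}\,\rho(\pmb{x}_p)=0 .
\]
I would take $\rho=P_h^{n+1}$, $\pmb{\theta}=-\pmb{S}_h^{n+1}$, $\pmb{\eta}=\pmb{Q}_h^{n+1}$ and $\varphi=-(U_h^{n+1}-U_h^{n})/\tau^{n+1}$, and repeat verbatim the integration by parts and the interface-flux cancellation (\ref{2.3b}) of Theorem~\ref{th: Stability2} — which under periodic boundary conditions still give $L_h^1(\pmb{Q}_h^{n+1};P_h^{n+1})=(f(U_h^{n+1})\pmb{S}_h^{n+1},\pmb{S}_h^{n+1})$ — and use $(P_h^{n+1},\varphi)=(\phi(\pmb{x})+H'(U_h^{n+1}),\varphi)$ from (\ref{43.1cQ3}), to arrive at
\[
\big(\phi(\pmb{x})+H'(U_h^{n+1}),\,U_h^{n+1}-U_h^{n}\big)=-\tau^{n+1}\big(f(U_h^{n+1})\pmb{S}_h^{n+1},\pmb{S}_h^{n+1}\big)+R^{n+1},
\]
where $R^{n+1}:=\mu^{n+1}(1,P_h^{n+1})+\sum_{K\in\mathcal{T}_h}\sum_{p=1}^{N_p}\lambda_p^{K,n+1}P_h^{n+1}(\pmb{x}_p)$ collects the multiplier contributions.

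Next I would insert the Taylor expansion $H(U_h^{n})=H(U_h^{n+1})+H'(U_h^{n+1})(U_h^{n}-U_h^{n+1})+\frac12 H''(\xi^{n+1})(U_h^{n+1}-U_h^{n})^2$ with $\xi^{n+1}\in(U_h^{n},U_h^{n+1})$, exactly as in Theorem~\ref{th: Stability2}, and use (\ref{Eh}), to get
\[
E_h(U_h^{n+1})-E_h(U_h^{n})=-\tau^{n+1}\big(f(U_h^{n+1})\pmb{S}_h^{n+1},\pmb{S}_h^{n+1}\big)-\frac12\big(H''(\xi^{n+1}),(U_h^{n+1}-U_h^{n})^2\big)+R^{n+1}.
\]
The first two terms are non-positive for the same reasons as in Theorem~\ref{th: Stability2}: the inequality constraint (\ref{5.1e}) enforces $U_h^{n+1}(\pmb{x}_p)\ge u_{\min}>0$ at every Gauss--Lobatto node, which are the only points entering the LDG quadrature, so $f(U_h^{n+1})\ge0$ there, and (\ref{1.fh}) makes $-\frac12(H''(\xi^{n+1}),(\cdot)^2)\le0$ (with $\xi^{n+1}>0$ because $U_h^{n}$, the solution of the previous KKT step, also satisfies $U_h^{n}\ge u_{\min}$; assumption (\ref{asu}) and Theorem~\ref{th: well-defined} ensure such a $U_h^{n+1}$ exists and is unique once $\tau^{n+1}$ is small). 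The statement therefore reduces to proving $R^{n+1}\le0$.

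Showing $R^{n+1}\le0$ is the only genuinely new ingredient and the main obstacle. The mixed complementarity (\ref{43.1a2}) gives $\lambda_p^{K,n+1}\ge0$ and $\lambda_p^{K,n+1}\big(u_{\min}-U_h^{n+1}(\pmb{x}_p)\big)=0$, so $U_h^{n+1}(\pmb{x}_p)=u_{\min}$ at every active node, and since the LDG discretisation collocates at the Gauss--Lobatto nodes the relation (\ref{43.1cQ3}) gives $P_h^{n+1}(\pmb{x}_p)=\phi(\pmb{x}_p)+H'(u_{\min})$ there. Testing (\ref{43.1a}) against $\rho\equiv1$, using $L_h^1(\pmb{Q}_h^{n+1};1)=0$ for periodic boundary conditions and the equality constraint $h(\widetilde{U}^{n+1})=0$, which by (\ref{43.1c1}) and mass conservation at the previous step forces $(U_h^{n+1}-U_h^{n},1)=0$, yields $\mu^{n+1}|\Omega|+\sum_{K,p}\lambda_p^{K,n+1}=0$, hence $\mu^{n+1}\le0$ and
\[
R^{n+1}=\sum_{K\in\mathcal{T}_h}\sum_{p=1}^{N_p}\lambda_p^{K,n+1}\Big(P_h^{n+1}(\pmb{x}_p)-\frac1{|\Omega|}(1,P_h^{n+1})\Big).
\]
The delicate point is that $R^{n+1}$ now pairs the positivity multipliers with the \emph{entropy variable} $P_h^{n+1}$ at the active nodes, while complementarity only controls the \emph{density} $U_h^{n+1}$ there. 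Bridging this gap is where the work lies: one must combine the collocation identity $P_h^{n+1}(\pmb{x}_p)=\phi(\pmb{x}_p)+H'(u_{\min})$, the monotonicity of $H'$ (so that $H'(u_{\min})\le H'(U_h^{n+1})$ pointwise, whence $\frac1{|\Omega|}(1,P_h^{n+1})\ge\overline{\phi}+H'(u_{\min})$), the lower bound $U_h^{n+1}\ge u_{\min}$, the non-degeneracy assumption $\frac1{|\Omega|}\int_\Omega u_0\,d\Omega\ge u_{\min}$ of the Remark, and the feasibility of the previous iterate $\widetilde{U}^{n}\in\mathbb{K}$, together with the variational-inequality characterisation of the KKT solution from Lemma~\ref{rela}, to conclude that $P_h^{n+1}$ at the active nodes does not exceed its mass-weighted mean, so that the positivity limiter never increases the discrete entropy; then $R^{n+1}\le0$ and (\ref{43.c}) follows.
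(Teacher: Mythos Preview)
Your setup is correct up to the point where you isolate the multiplier remainder $R^{n+1}$, and your reduction $R^{n+1}=\sum_{K,p}\lambda_p^{K,n+1}\big(P_h^{n+1}(\pmb{x}_p)-\tfrac1{|\Omega|}(1,P_h^{n+1})\big)$ is fine. The gap is in the final paragraph: the inequality you try to extract, ``$P_h^{n+1}$ at the active nodes does not exceed its mass-weighted mean'', simply does not follow from the ingredients you list. Using the collocation identity and the monotonicity of $H'$ you only get $P_h^{n+1}(\pmb{x}_p)-\tfrac1{|\Omega|}(1,P_h^{n+1})\le \phi(\pmb{x}_p)-\overline{\phi}$, and the right-hand side has no sign: nothing prevents the limiter from being active at a point where the external potential $\phi$ is above its mean. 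Your closing appeal to ``the variational-inequality characterisation of the KKT solution from Lemma~\ref{rela}'' is the right instinct, but without saying \emph{which} test vector to insert into the VI it is not an argument, and none of the other listed facts (feasibility of $\widetilde{U}^n$, the mass bound on $u_0$) close the gap.

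The paper does not attempt to estimate $R^{n+1}$ at all. Instead it goes back to the VI form $(y-\widetilde{U}^{n+1})^TL(\widetilde{U}^{n+1})\ge0$ for all $y\in\mathbb{K}$ and inserts the specific competitor $y=\widetilde{U}^{n+1}-cM^{-1}D(\widetilde{U}^{n+1})=\widetilde{U}^{n+1}-c\widetilde{P}^{n+1}$, which by assumption (\ref{asu}) lies in $\mathbb{K}$ for some $c>0$. This immediately yields $(\widetilde{P}^{n+1})^TL(\widetilde{U}^{n+1})\le0$; expanding $L$ and using the positive semidefiniteness of $B\pmb{M}^{-1}C_d\pmb{M}^{-1}B^T$ gives $D(\widetilde{U}^{n+1})^T(\widetilde{U}^{n+1}-\widetilde{U}^n)\le0$, i.e. $(\phi+H'(U_h^{n+1}),U_h^{n+1}-U_h^n)\le0$, and your Taylor step then finishes. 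The point is that testing the VI against a \emph{feasible} perturbation absorbs the equality and inequality multipliers automatically --- it is exactly the device that replaces your sign analysis of $R^{n+1}$, and it is the missing idea in your proposal.
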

\begin{proof}
From Lemma \ref{rela}, we obtain that the LDG coefficients $\widetilde{U}^{n+1}$ of the positivity preserving solution $U_h^{n+1}$ solve
\begin{align}\label{VIe1}
(y-\widetilde{U}^{n+1})^TL(\widetilde{U}^{n+1})\geqslant 0,\quad \forall y\in \mathbb{K},
\end{align}
with $L$ given by (\ref{L}) and $\mathbb{K}$ given by (\ref{K}).

From assumption (\ref{asu}), we have that there exists a positive constant $c\geqslant c_0>0$ such that
\begin{align}
\widetilde{U}^{n+1}-cM^{-1}D(\widetilde{U}^{n+1})\in \mathbb{K}.
\end{align}
Next, we choose $y=\widetilde{U}^{n+1}-cM^{-1}D(\widetilde{U}^{n+1})$ in (\ref{VIe1}), which implies
\begin{align}\label{11}
-c(M^{-1}D(\widetilde{U}^{n+1}))^TL(\widetilde{U}^{n+1})\geqslant 0.
\end{align}
Using (\ref{L}) and the fact that $c>0$, we obtain that (\ref{11}) implies the inequality
\begin{align}\label{qqq}
&D(\widetilde{U}^{n+1})^T(\widetilde{U}^{n+1}-\widetilde{U}^{n})\nn\\
+&\tau^{n+1}D(\widetilde{U}^{n+1})^TM^{-1}B\pmb{M}^{-1}C_d(\widetilde{U}^{n+1})\pmb{M}^{-1}B^TM^{-1}D(\widetilde{U}^{n+1})\leqslant 0.
\end{align}
From the definition of $C_d$ in (\ref{Matrix1}), (\ref{Matrix4}) using (\ref{L12}) and the conditions on $f$ stated in (\ref{1.fh}), we obtain that $C_d(\widetilde{U}^{n+1})$ is symmetric positive definite. Hence using $\tau^{n+1}>0$, we have
\begin{align*}
\tau^{n+1}D(\widetilde{U}^{n+1})^TM^{-1}B\pmb{M}^{-1}C_d(\widetilde{U}^{n+1})\pmb{M}^{-1}B^TM^{-1}D(\widetilde{U}^{n+1})\geqslant 0,
\end{align*}
which with (\ref{qqq}) yields
\begin{align}\label{4331}
D(\widetilde{U}^{n+1})^T(\widetilde{U}^{n+1}-\widetilde{U}^{n})\leqslant0.
\end{align}
From the definition of $D$ in (\ref{Matrix3}) using (\ref{L14}) and (\ref{4331}), we obtain the bound
\begin{align}\label{22}
\left(\phi(\pmb{x}), \displaystyle U_h^{n+1}-U_h^{n}\right)+\left(H'(U_h^{n+1}), U_h^{n+1}-U_h^{n}\right)\leqslant0.
\end{align}
Using the following Taylor expansion
\begin{align*}
H(U_h^{n})=&H(U_h^{n+1})+H'(U_h^{n+1})(U_h^{n}-U_h^{n+1})
\nn\\
&+\frac12H''(\xi_3^{n+1})(U_h^{n+1}-U_h^{n})^2,\ \xi_3^{n+1}\in(U_h^{n}, U_h^{n+1}),
\end{align*}
we obtain that (\ref{22}) gives
\begin{align*}
\left(\phi(\pmb{x}), U_h^{n+1}-U_h^{n}\right)+\left(H(U_h^{n+1})-H(U_h^{n}),1\right)
+\frac12\left(H''(\xi_3^{n+1}), \left(U_h^{n+1}-U_h^{n}\right)^2\right)
\leqslant 0,
\end{align*}
which implies, using the definition of $E_h$ in (\ref{Eh}), that
\begin{align*}
{E}_h(U_h^{n+1})-{E}_h(U_h^{n})=&\left(\phi(\pmb{x}), U_h^{n+1}-U_h^{n}\right)+\left(H(U_h^{n+1})-H(U_h^{n}),1\right)
\leqslant 0,
\end{align*}
since (\ref{1.fh}) gives $H''(\xi_3^{n+1})\geqslant 0$. This proves (\ref{43.c}).
\end{proof}
 \section{Numerical tests} \label{Numerical}

 In this section, we will discuss several numerical experiments to demonstrate the performance of the KKT-DIRK-LDG positivity preserving algorithm for the degenerate parabolic equation (\ref{1.a}).
In the computations, we will consider the porous medium equation, the nonlinear diffusion equation with a double-well potential and the nonlinear Fokker-Plank equation for fermion and boson gases. Firstly, we will present in Section \ref{order} the order of accuracy of the DIRK-LDG discretizations with and without positivity preserving limiter to investigate if the limiter negatively affects the accuracy of the discretizations. Next, we will present in Sections \ref{double-well}-\ref{boson} test cases for which the positivity preserving limiter is essential. Without the positivity constraint, obtaining a numerical solution or only for extremely small time steps is impossible.

In the computations, we take $\tau=\alpha \cdot h$. If the Newton method during strongly nonlinear stages requires a large number of iterations, it is generally more efficient to reduce the time step to $\displaystyle\frac{1}2\tau$ and restart the Newton iterations. When the Newton method converges well,  then $\tau$ is increased each time step to $1.2\tau$, till the maximum predefined time step is obtained.

 In order to avoid round-off effects, a positivity bound $u_{\min}=10^{-10}$ is used in the numerical simulations, except for Section \ref{order} where $u_{\min}=10^{-14}$. If it is not stated otherwise, the numerical results for 1D problems are obtained on a mesh containing 100 elements and Legendre polynomials of order 2. For 2D problems, a mesh consisting of $30\times 30$ square elements and tensor product Legendre polynomial basis functions of order 2 are used.

\subsection {Accuracy tests}\label{order}
For the accuracy test, we use a uniform mesh with $M$ elements and positivity bound $u_{\min}=10^{-14}$.

\begin{ex}\label{order2}
We consider (\ref{1.a}) on the domain $\Omega=[-1,1]$ with Dirichlet boundary conditions based on the exact solution and select the following parameters
\begin{align*}
f(u)=u,\quad H'(u)=u^{2},\quad \phi(x)=0,\quad x\in \Omega.
\end{align*}
 Then (\ref{1.a}) with a properly chosen source term has the nonnegative solution
 \begin{align*}
u(x,t)=\exp(-t)(1-x^4)^{5},\quad x\in \Omega.
 \end{align*}

We take $\alpha$ in the definition of the time step as $\alpha=1$. Tables \ref{num2}-\ref{num3} show that the DIRK-LDG discretizations with and without positivity preserving limiter are convergent at the rate $O(h^{k+1})$ for basis functions with polynomial order ranging from 1 to 3. The errors and orders of accuracy presented in Tables \ref{num2}-\ref{num3} indicate that the positivity preserving limiter is necessary and does not negatively affect accuracy.

 {

 \begin{table}[htbp]
   \centering
     \caption{\small\label{num2} Error in $L^\infty-$ and $L^1-$  norms for Example \ref{order2} at time $T=1$ without positivity preserving limiter.}

   \centering
    \begin{tabular}{|r|c|ccccc|}

    \hline
     $\mathcal{P}_{k}$ & $M$ & $\|u_n-u_h^n\|_{L^\infty(\Omega)}$ & Order& $\|u_n-u_h^n\|_{L^1(\Omega)}$ & Order & $\min u_h^n$  \\\hline
            & $40$ &7.33E-003& --&1.03E-003& --& -8.87e-005\\
       1   &$80$ &1.24e-003&2.56 &2.27e-004&2.18& -1.08e-005\\
            &$160$ &2.63e-004&2.24&5.44e-005&2.06& -4.41e-007\\
            &$320$ &6.05e-005&2.12&1.35e-005&2.01& -1.57e-008\\
    \hline
            &$40$ &1.70E-003& --&8.73E-005& -- & -1.60e-005\\
         2  &$80$ &1.43e-004&3.57 &8.07e-006&3.44& -1.79e-007\\
           &$160$ &1.36e-005&3.39&9.40e-007&3.10& -6.24e-009\\
            &$320$ &1.34e-006&3.34&1.16e-007&3.02& -2.07e-010\\

         \hline
          &$40$ &1.45e-004&-- &6.00e-006&--& -2.14e-006\\
       3   &$80$ &9.87e-006&3.88&3.11e-007&4.27& -9.56e-008\\
            &$160$ &5.51e-007&4.16&1.76e-008&4.14& -3.51e-009\\
             &$320$ &3.50e-008&3.98&1.11e-009&3.99& -1.19e-010\\

                  \hline
    \end{tabular}
    \bigskip
   \caption{\small\label{num3} Error in $L^\infty-$ and $L^1-$  norms for Example \ref{order2} at time $T=1$ with positivity  preserving limiter.}

   \centering
   \begin{tabular}{|r|c|ccccc|}
     \hline
    $\mathcal{P}_{k}$ & $M$ & $\|u_n-U_h^n\|_{L^\infty(\Omega)}$ & Order & $\|u_n-U_h^n\|_{L^1(\Omega)}$ & Order & $\min U_h^n$  \\\hline
           & $40$ &7.33E-003& --&1.05E-003& --& 2.05e-005\\
       1   &$80$ &1.24e-003&2.56 &2.27e-004&2.21& 8.15e-007\\
            &$160$ &2.63e-004&2.24&5.44e-005&2.06& 2.77e-008\\
            &$320$ &6.05e-005&2.12&1.35e-005&2.01& 8.55e-010\\
    \hline
             &$40$ &1.70E-003& --&8.73E-005& -- & 6.15e-008\\
         2  &$80$ &1.43e-004&3.57 &8.08e-006&3.43& 3.03e-007\\
           &$160$ &1.36e-005&3.39&9.40e-007&3.10& 1.08e-008\\
            &$320$ &1.34e-006&3.34&1.16e-007&3.02& 4.55e-010\\
         \hline
              &$40$ &1.45e-004&-- &6.02e-006&--& 1.00e-014\\
       3   &$80$ &9.87e-006&3.88&3.13e-007&4.27& 4.45e-008\\
            &$160$ &5.51e-007&4.16&1.77e-008&4.14& 1.21e-009\\
             &$320$ &3.50e-008&3.98&1.11e-009&4.00& 2.55e-011\\       \hline

    \end{tabular}

\end{table}
}

\end{ex}

 \subsection{Porous media equation}\label{diffusion}

For the porous media equation, $f(u)H''(u)$ can locally vanish, resulting in degenerate cases \cite{bessemoulin2012finite}. We test the asymptotic behavior of the numerical solution and will show that the KKT limiter is necessary. The entropy defined in (\ref{1.b}), which should be non-increasing, is also computed.


\begin{ex}\label{porous}
In order to test degenerate cases, we choose the following parameters in  (\ref{1.a}) on the domain $\Omega= [0,1]$ with zero-flux boundary conditions
 \begin{align*}
  f(u)=u,\quad H'(u)=\displaystyle\frac43\left(u-\frac12\right)^3\max\left(u,\frac12\right),\quad \phi(x)=0, \quad  x \in \Omega,
 \end{align*}
and initial data
 \begin{align*}
u(x,0)=\displaystyle\frac12-\frac12\cos(2 \pi x), \quad  x \in \Omega.
\end{align*}

During the computations, the value of $\alpha$ for optimal convergence of the semi-smooth Newton algorithm is usually close to 0.1. We present the numerical solution in Fig. \ref{num6} for basis functions with polynomial order ranging from 1 to 3 and with the KKT limiter enforced. Values of the Lagrange multiplier $\lambda$ larger than $10^{-10}$ are shown in Fig. \ref{num6}, which indicate that the positivity constraint works well since it is only active at locations where the solution is close to the minimum value. The entropy decay using the KKT limiter and polynomial basis functions of order 3 is presented in Fig. \ref{num71}, in which the result is consistent with the stability analysis. In Fig. \ref{num72}, the numerical solution without KKT limiter and for polynomial basis functions with order 3 is plotted. This computation breaks down due to unphysical oscillations.
 \begin{figure*}[htbp]
\small
\centering
\subfigure [$\mathcal{P}_{1}$]{\includegraphics[height=5.5cm,width=5.1cm]{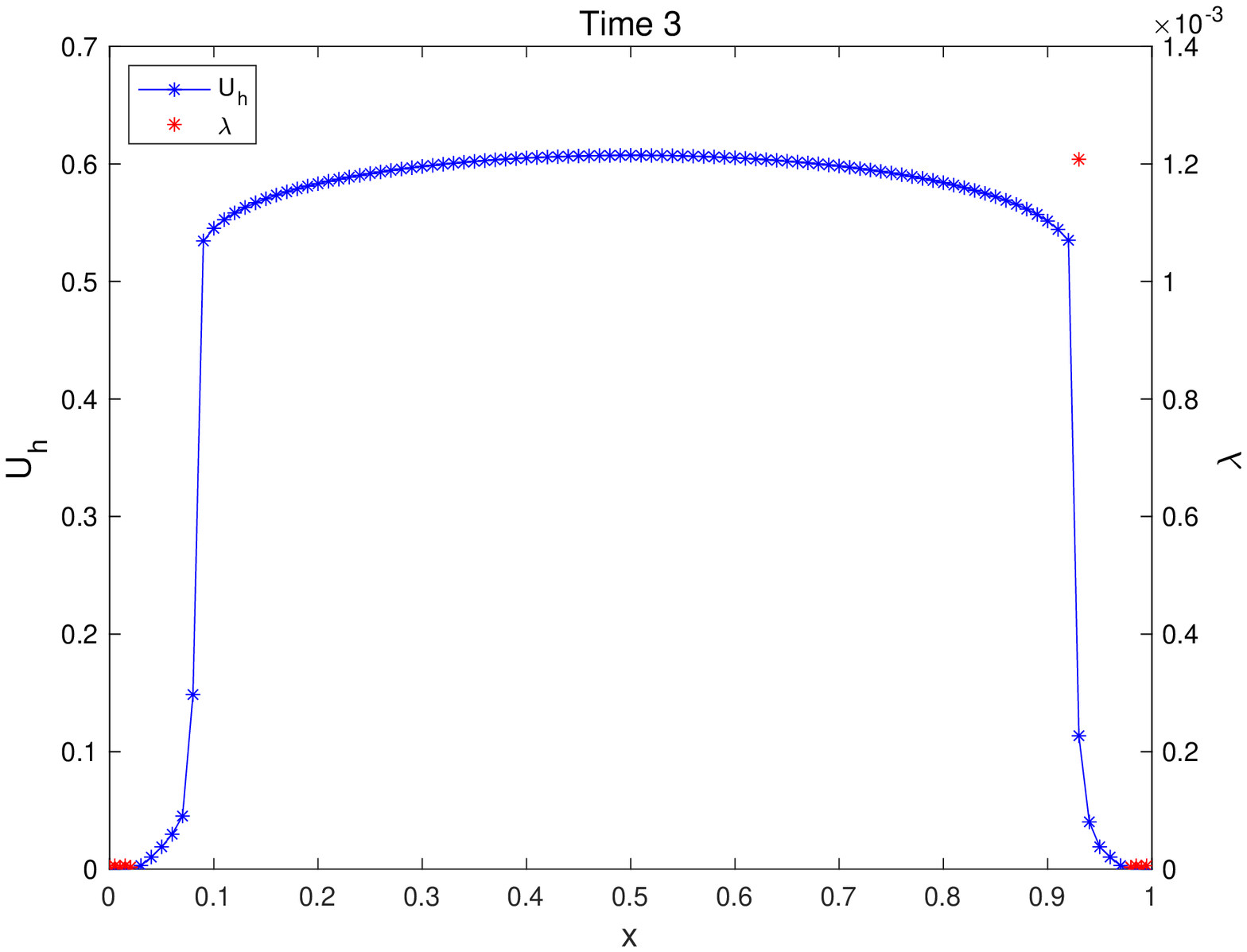}}
\subfigure [$\mathcal{P}_{2}$]{\includegraphics[height=5.5cm,width=5.1cm]{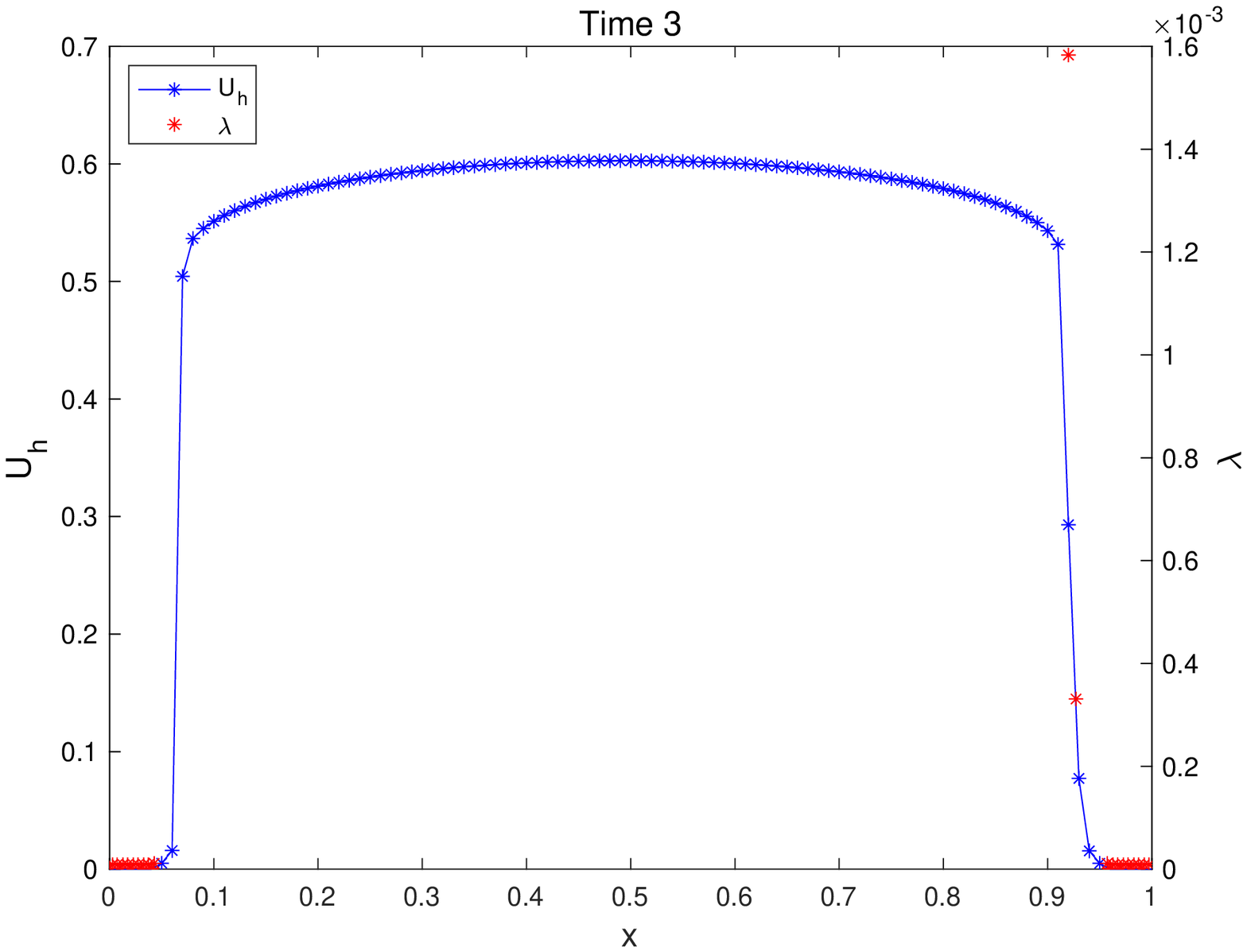}}
\subfigure[$\mathcal{P}_{3}$] {\includegraphics[height=5.5cm,width=5.1cm]{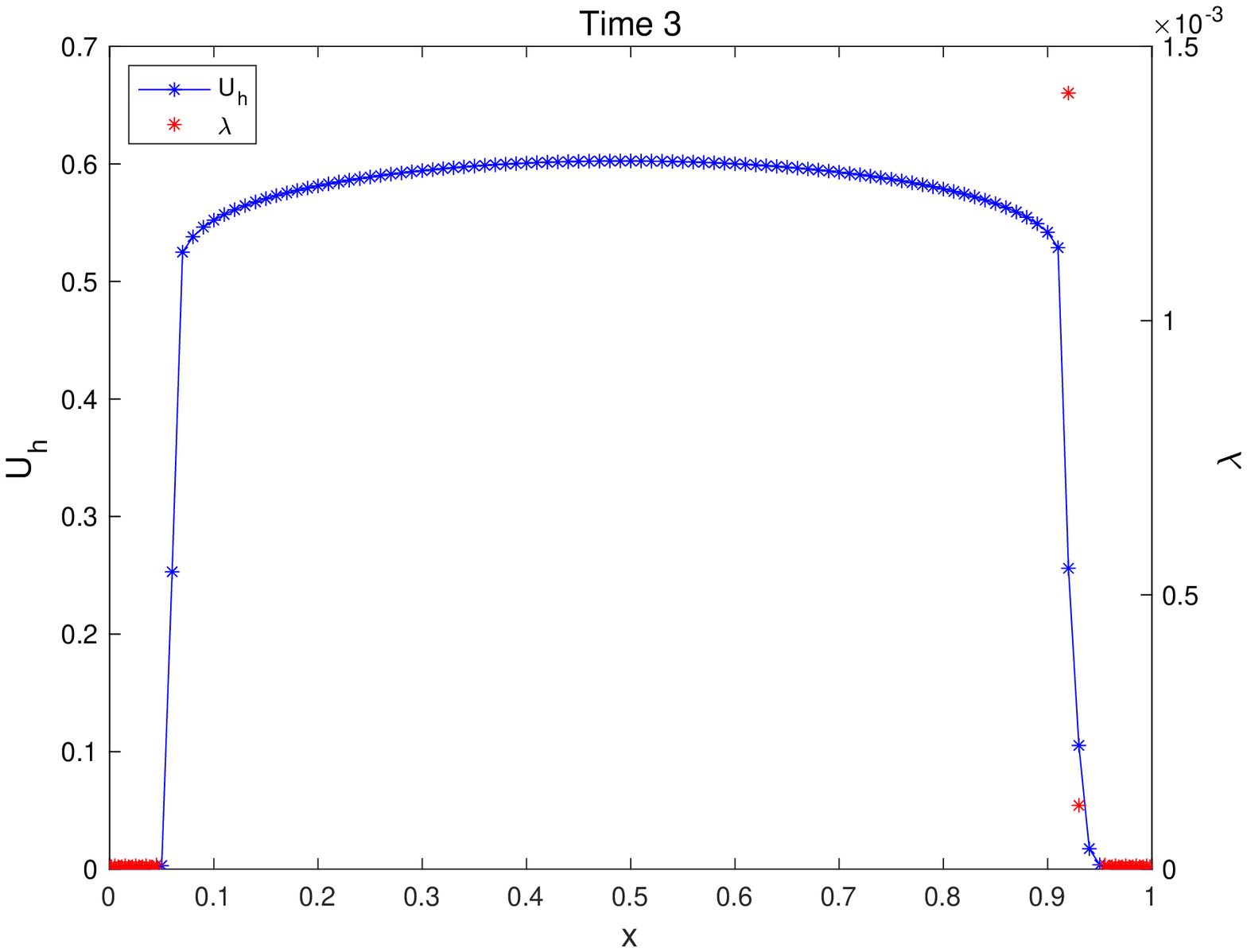}}
 \caption{\small\label{num6}(Example \ref{porous}) Numerical solution $U_h$ for different orders of polynomial basis functions $\mathcal{P}_{1}$-$\mathcal{P}_{3}$ with the KKT limiter enforced and Lagrange multiplier $\lambda$ (red dots).}
 \end{figure*}

 \begin{figure*}[htbp]
\small
\centering
 \subfigure {\includegraphics[height=7cm,width=7.9cm]{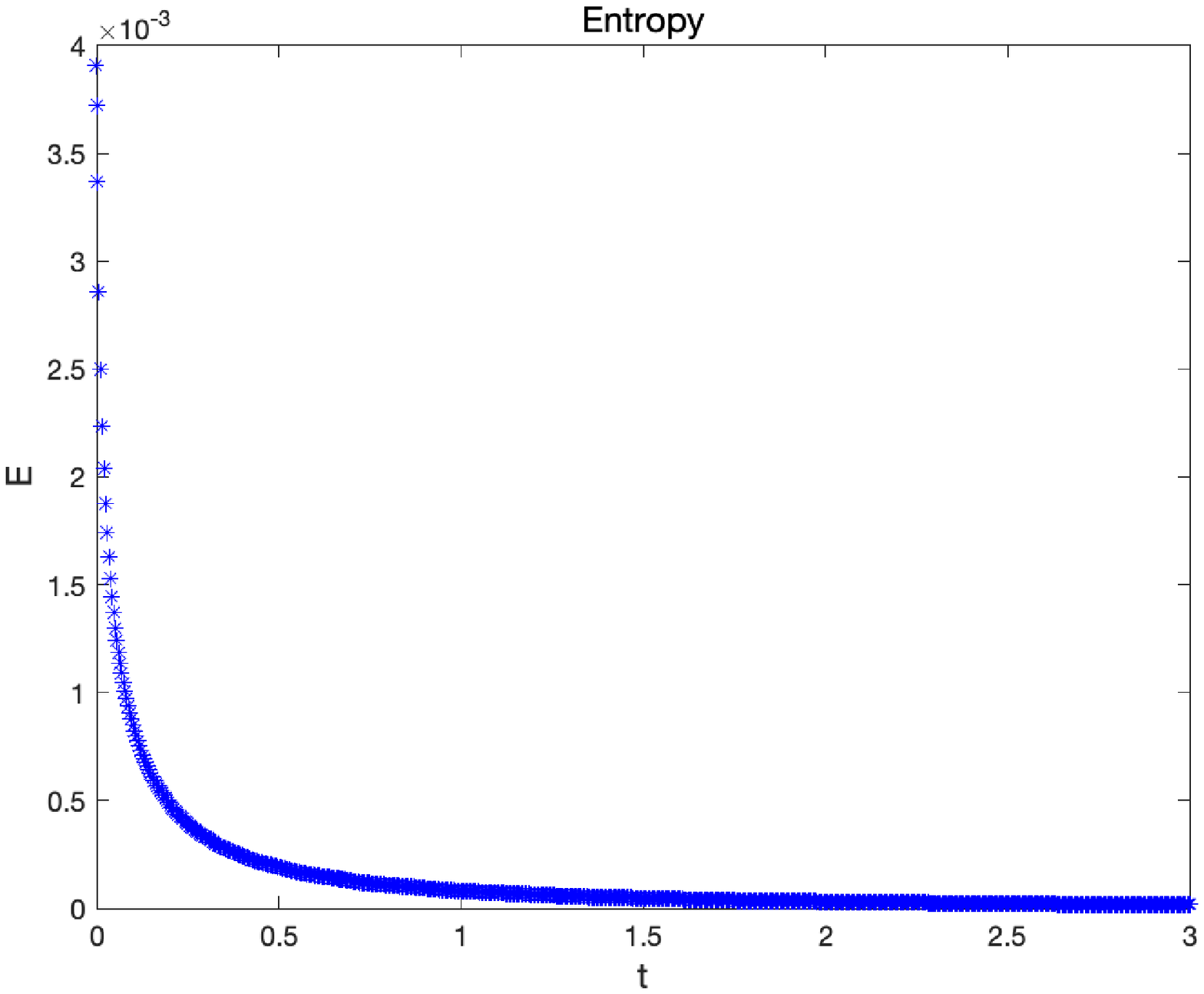}}
 \caption{\small\label{num71}(Example \ref{porous}) Entropy $E_h$ for $\mathcal{P}_{3}$ basis functions with the KKT limiter enforced.}
\end{figure*}

 \begin{figure*}[htbp]
\small
\centering
\subfigure {\includegraphics[height=7cm,width=7.9cm]{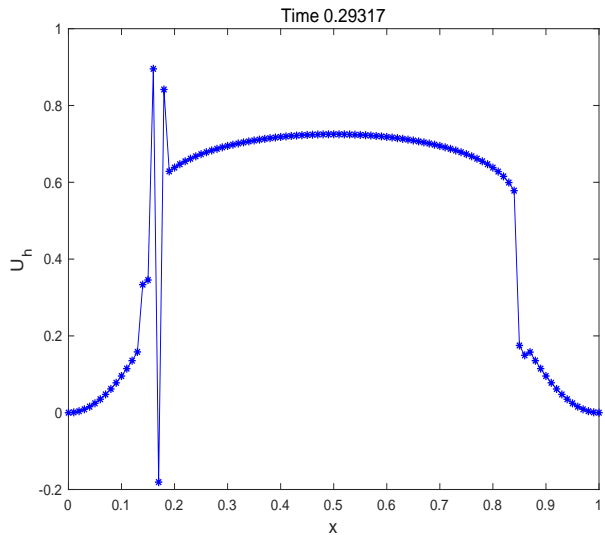}}
 \caption{\small\label{num72}(Example \ref{porous}) Numerical solution $U_h$ for $\mathcal{P}_{3}$ basis functions \emph{without} KKT limiter just before blow up.}
\end{figure*}
\end{ex}

 \begin{ex}\label{porous3}
 We consider a 2D test case on the domain $\Omega=[-6,6]^2$ with zero-flux boundary conditions by choosing in  (\ref{1.a}) the following parameters
 \begin{align*}
 f(u)=u,\quad H'(u)=2u,\quad \phi(\pmb{x})=0,\quad \pmb{x}\in \Omega,
 \end{align*}
and initial data
 \begin{align*}
u(\pmb{x},0)=\exp\left(-\frac{1}{2}|\pmb{x}|^2\right),\quad \pmb{x}\in \Omega.
 \end{align*}

The value of $\alpha$ in the definition of the time step ranges in this case between 0.1 and 1. Fig. \ref{num10} presents the numerical solution with the KKT limiter active and also the Lagrange multiplier $\lambda$. Considering the position of the non-zero Lagrange multipliers, we can see that the limiter also works well in the two-dimensional case since it is only active in areas where positivity must be enforced. The entropy decay is plotted in Fig. \ref{num11}, which is consistent with the stability result of the numerical solution. Without the KKT limiter, there will be unphysical oscillations, and the computation will break down at some point in the computations.

\begin{figure*}[htbp]
\small
\centering
\subfigure{\includegraphics[height=6.5cm,width=7.5cm]{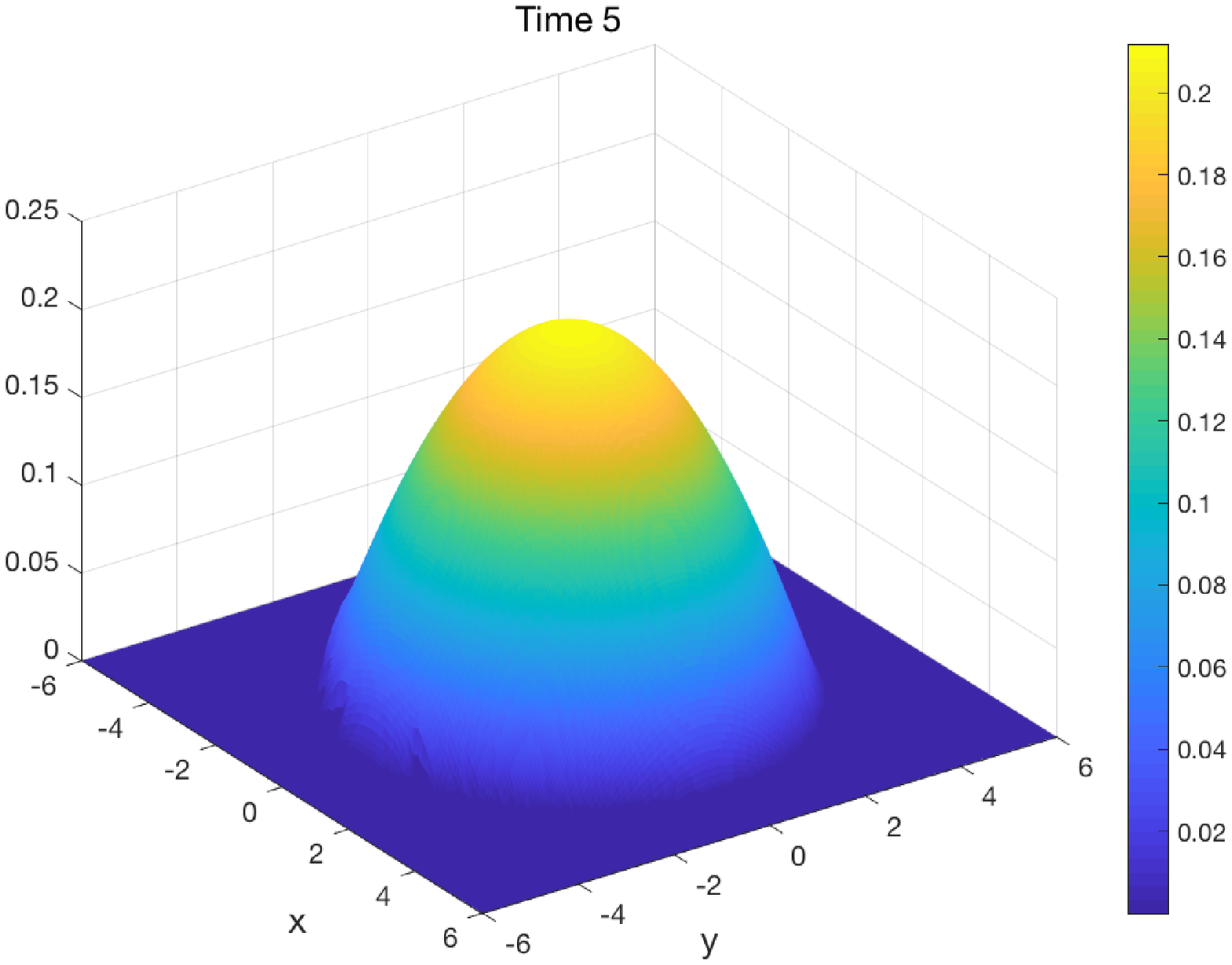}}
\subfigure{\includegraphics[height=6.5cm,width=7.5cm]{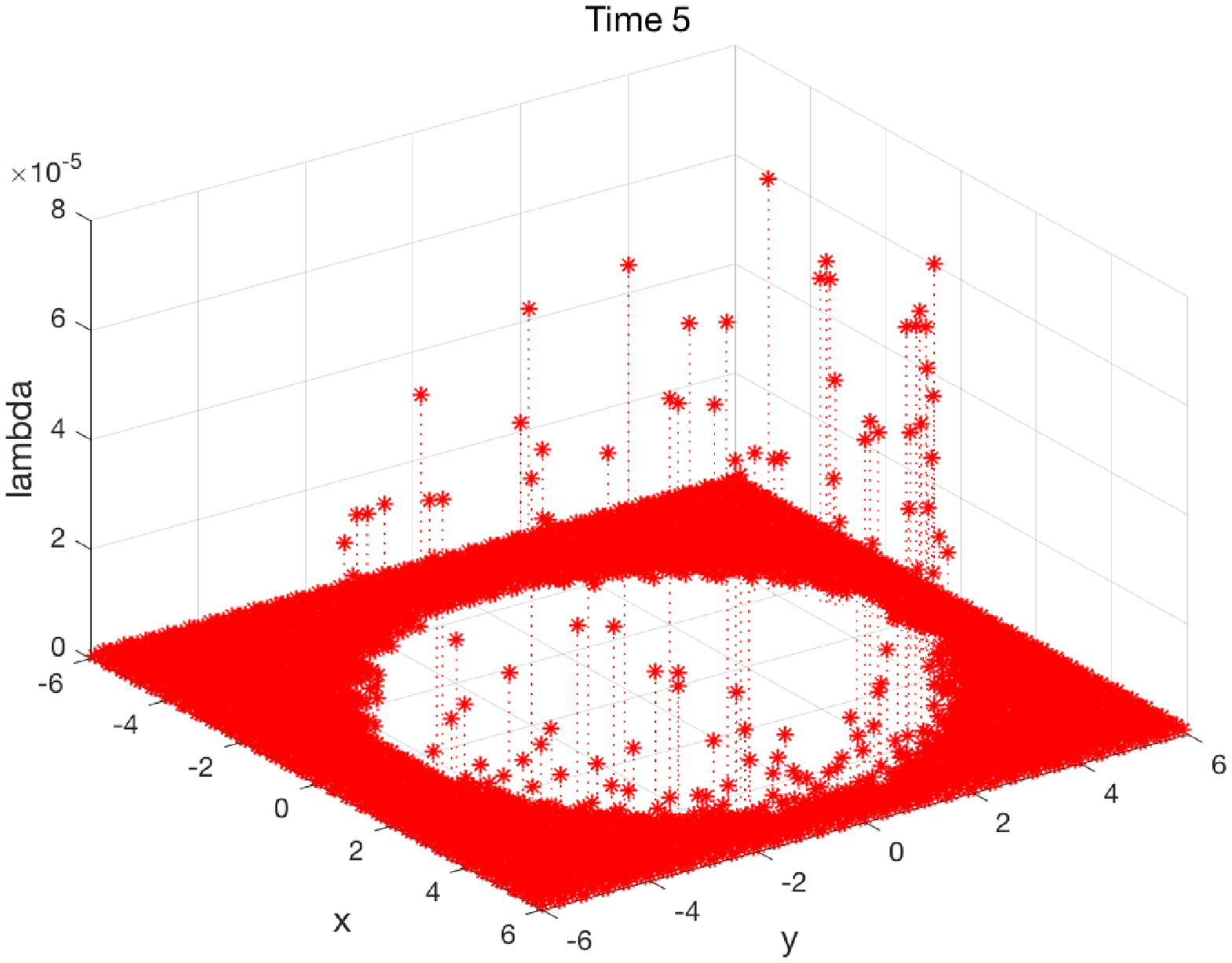}}
 \caption{\small\label{num10}(Example \ref{porous3}) Numerical solution $U_h$ for $\mathcal{P}_{2}$ basis functions with KKT limiter enforced (Left) and Lagrange multiplier $\lambda$ (Right).}
\subfigure{\includegraphics[height=6.05cm,width=7.05cm]{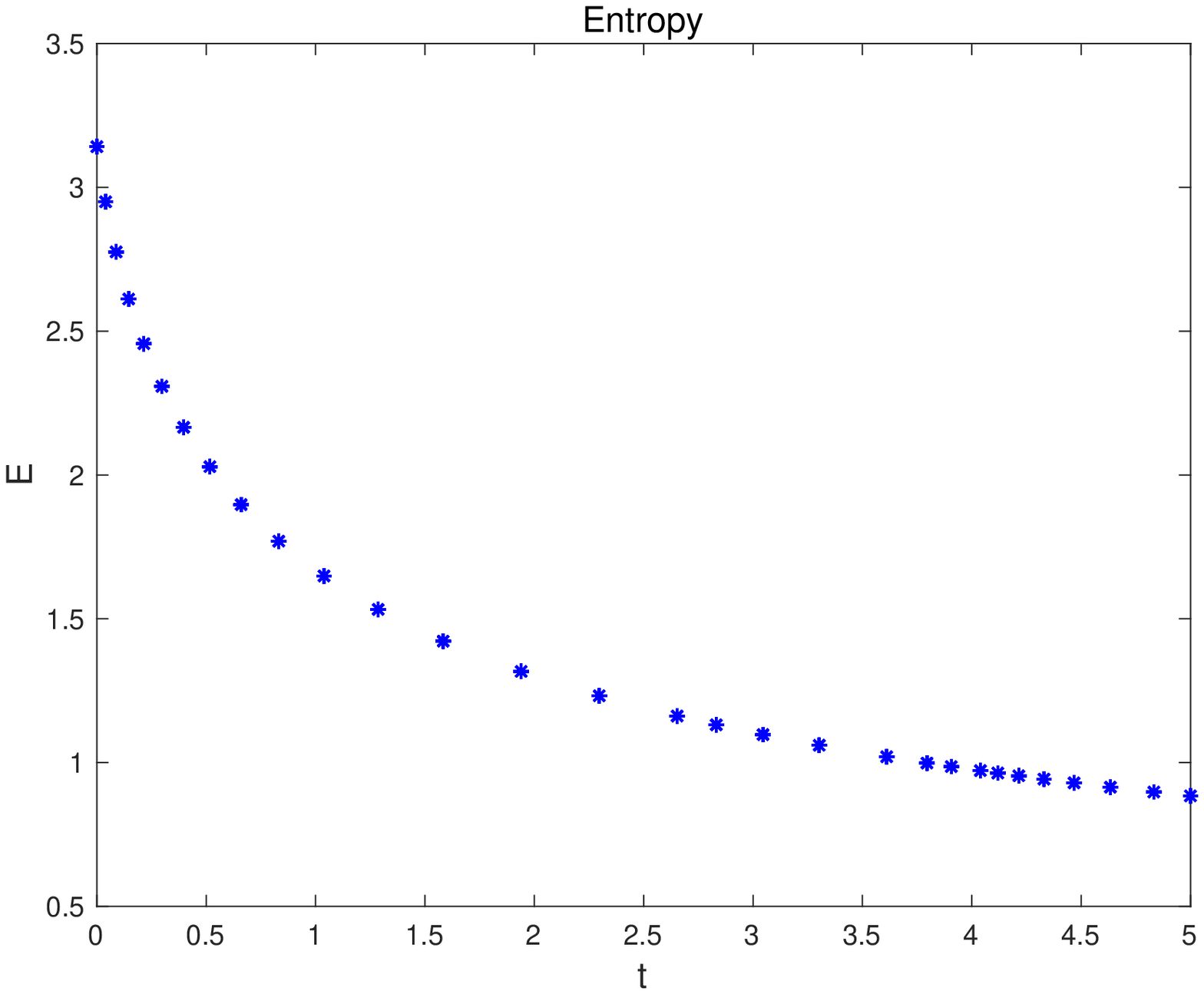}}
 \caption{\small\label{num11}(Example \ref{porous3}) Entropy $E_h$ for $\mathcal{P}_{2}$ basis functions with KKT limiter enforced.}
\end{figure*}
\end{ex}

  \subsection{Nonlinear diffusion with a double-well potential}\label{double-well}

 Consider the nonlinear diffusion equation with double-well potential \cite{kalmykov2007brownian} on the domain $\Omega=[-1.4,1.4]$, which is obtained by choosing in (\ref{1.a}) zero-flux boundary conditions and the following parameters
 \begin{align}\label{**}
 f(u)=u,\quad H'(u)=u ,\quad \phi(x)=\frac{1}{4}x^4-\frac{1}{2}x^2, \quad x\in \Omega.
 \end{align}
This model is taken from \cite{carrillo2015finite}. We will test the evolution of the numerical solution with and without KKT limiter, and also the decay of the entropy (\ref{1.b}). The value of $\alpha$ to compute the time step ranges between 0.01 to 0.1.

\begin{ex}\label{double-well1}
We consider (\ref{1.a}) with (\ref{**})  and the initial data
 \begin{align*}
u(x,0)=\frac{0.2}{\sqrt{0.4\pi}}\exp\left({-\frac{x^2}{0.4}}\right),\quad x\in \Omega.
 \end{align*}

The numerical solution with the KKT limiter enforced and the values of the Lagrange multiplier $\lambda$ larger than $10^{-10}$ are shown in Fig. \ref{num4}. These results indicate that the numerical solution tends to a steady state and that the KKT limiter is only active at places where the positivity constraint needs to be imposed. The entropy dissipation is presented in Fig. \ref{num5}, in which uniform decay coincides with our theoretical analysis. For the numerical solution without the KKT limiter, we observe that violating the positivity constraint will result in discontinuities in the solution and a computation breakdown, even for a very small CFL number.

 \begin{figure*}[htbp]
\small
\centering
\subfigure{\includegraphics[height=6.25cm,width=6.25cm]{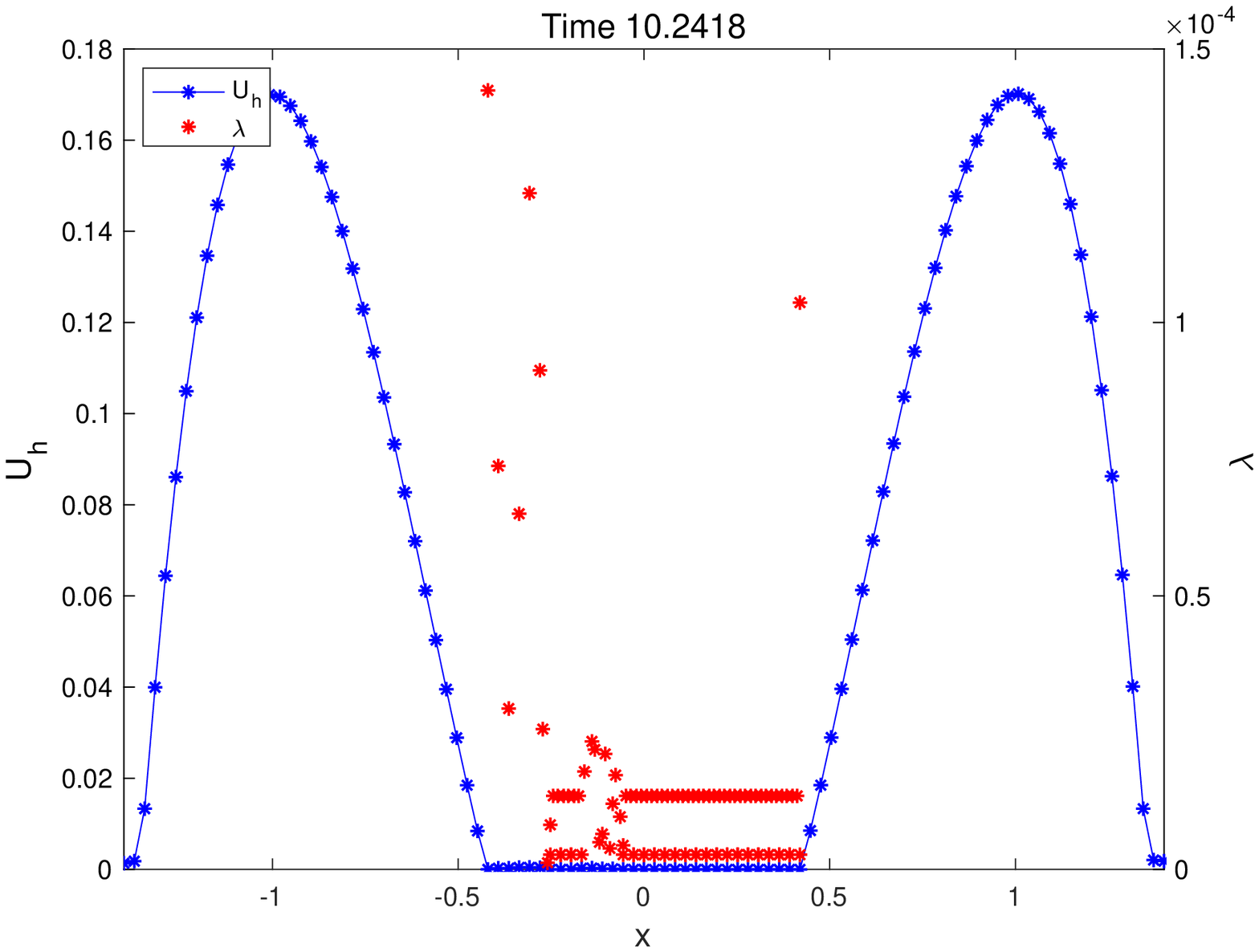}}
\subfigure{\includegraphics[height=6.25cm,width=6.25cm]{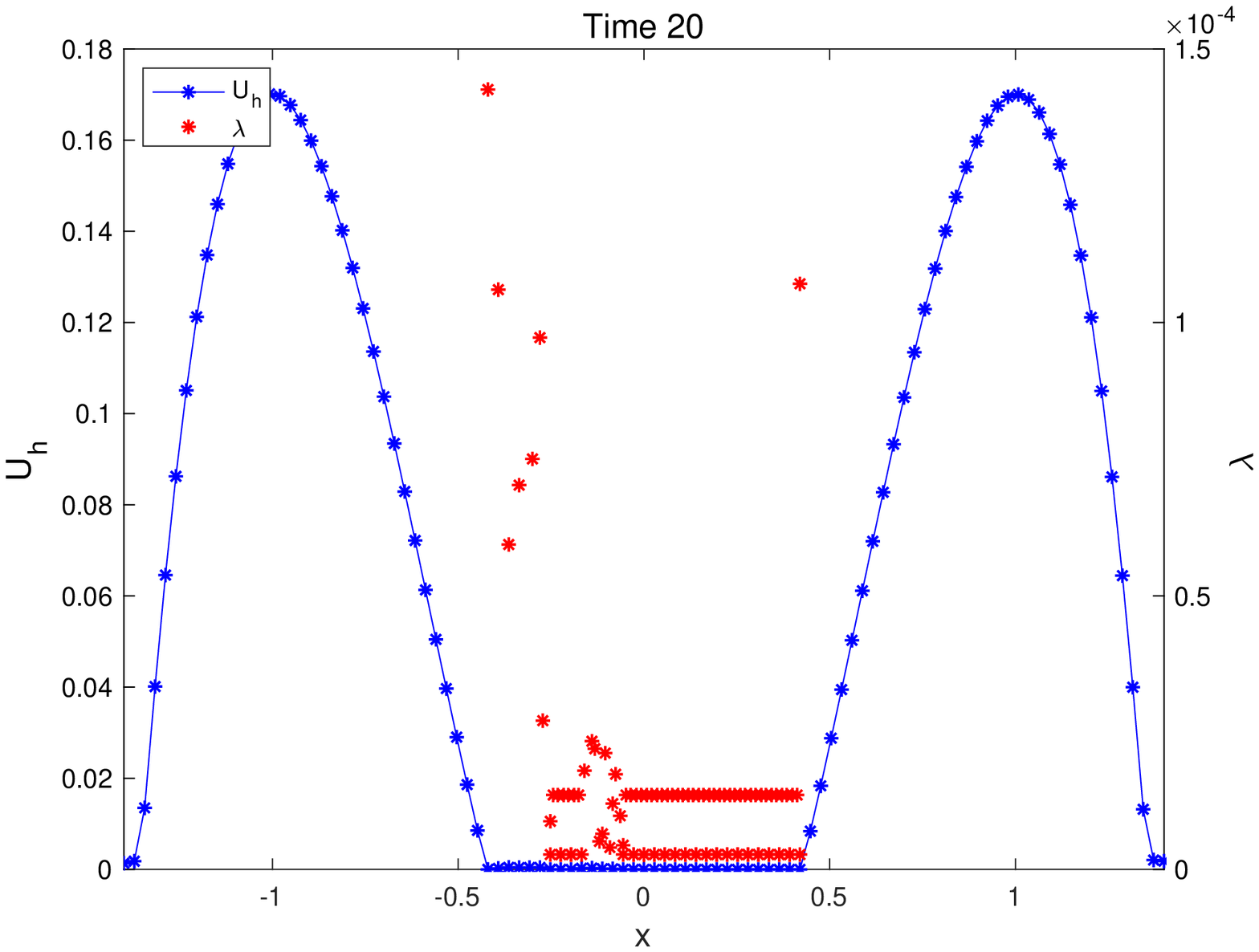}}
 \caption{\small\label{num4}(Example \ref{double-well1}) Numerical solution $U_h$ for $\mathcal{P}_{2}$ basis functions with KKT limiter enforced and Lagrange multiplier $\lambda$ (red dots).}
\small
\centering
\subfigure{\includegraphics[height=6.6cm,width=8cm]{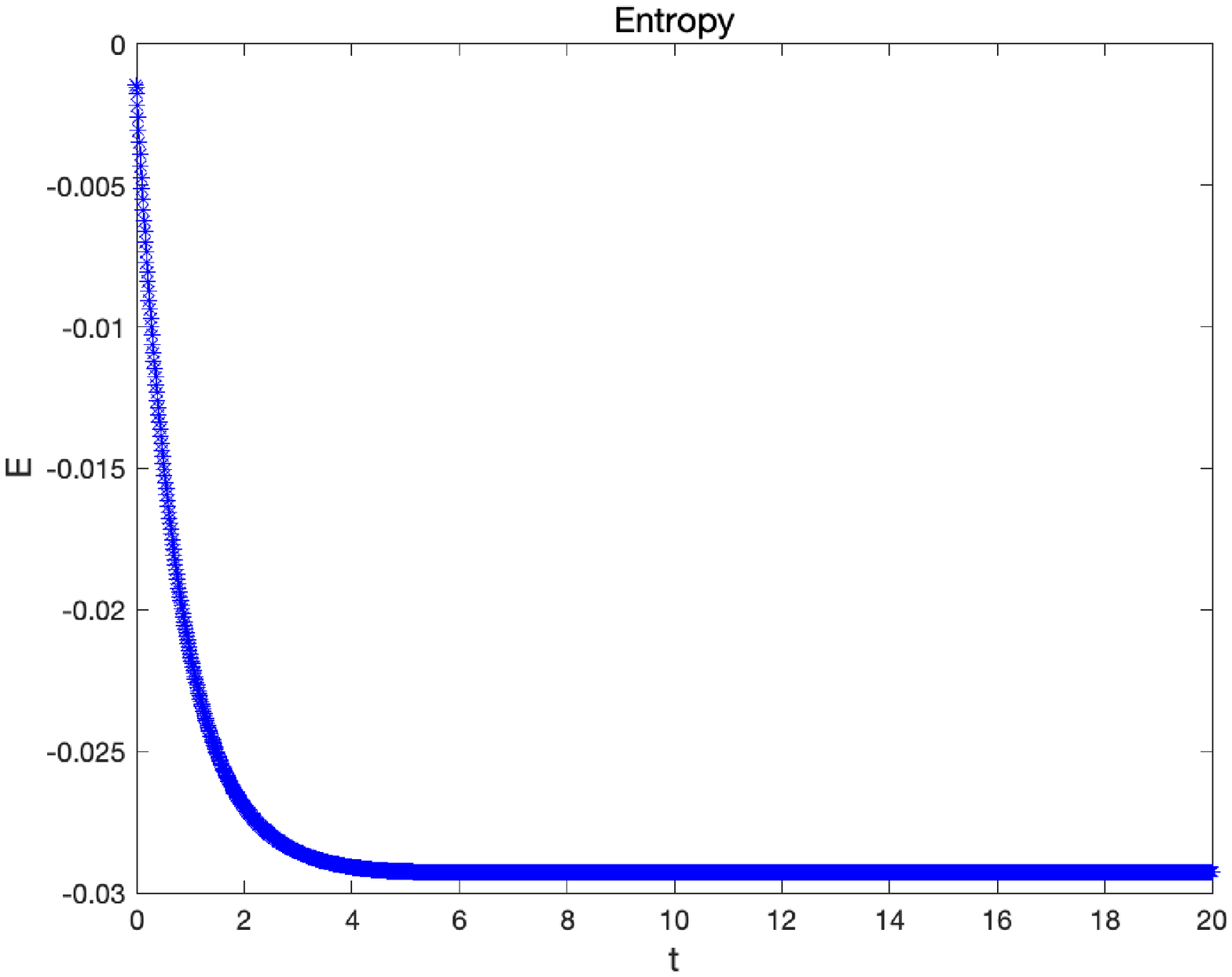}}
 \caption{\small\label{num5}(Example \ref{double-well1}) Entropy $E_h$ for $\mathcal{P}_{2}$ basis functions with KKT limiter enforced.}
\end{figure*}
\end{ex}

 \subsection {Nonlinear Fokker-Plank equation for fermion gases}\label{Fermion1}
 We consider the nonlinear Fokker-Plank equation for fermion gases \cite{bessemoulin2012finite} on the domain $\Omega=[-10,10]^2$, for which we select the following parameters in (\ref{1.a})
  \begin{align}\label{546}
 f(u)=u(1-u),\quad H'(u)=\log \frac{u}{1-u},\quad \phi(\pmb{x})=\frac{1}{2}|\pmb{x}|^2,\quad \pmb{x}\in\Omega,
 \end{align}
together with zero-flux boundary conditions.

 \begin{ex}\label{Fermion2D}

 We consider (\ref{1.a}) with (\ref{546}) and initial data
 \begin{align*}
\displaystyle u(\pmb{x},0)=&\frac{1}{2\sqrt{2\pi}}\left(\exp\left(-\frac{1}{2}|\pmb{x}-(2,2)|^2\right)+\exp\left(-\frac{1}{2}|\pmb{x}-(2,-2)|^2\right)\right.\\
&+\left.\exp\left(-\frac{1}{2}|\pmb{x}-(-2,2)|^2\right)+\exp\left(-\frac{1}{2}|\pmb{x}-(-2,-2)|^2\right)\right),\quad \pmb{x}\in\Omega.
 \end{align*}

During the computations, the value of $\alpha$ in the definition of the time step ranges between 0.1 and 1, but for most time steps $\alpha=1$. The numerical solutions at several time levels with the KKT limiter enforced and the entropy dissipation are presented in Figs \ref{num13} and \ref{num13-1}, respectively, showing the time-asymptotic convergence of the numerical solution towards a steady state. Without the KKT limiter, the computations break down, even for very small CFL numbers.

\begin{figure*}[htbp]
\small
\centering
\subfigure {\includegraphics[height=6.25cm,width=6.25cm]{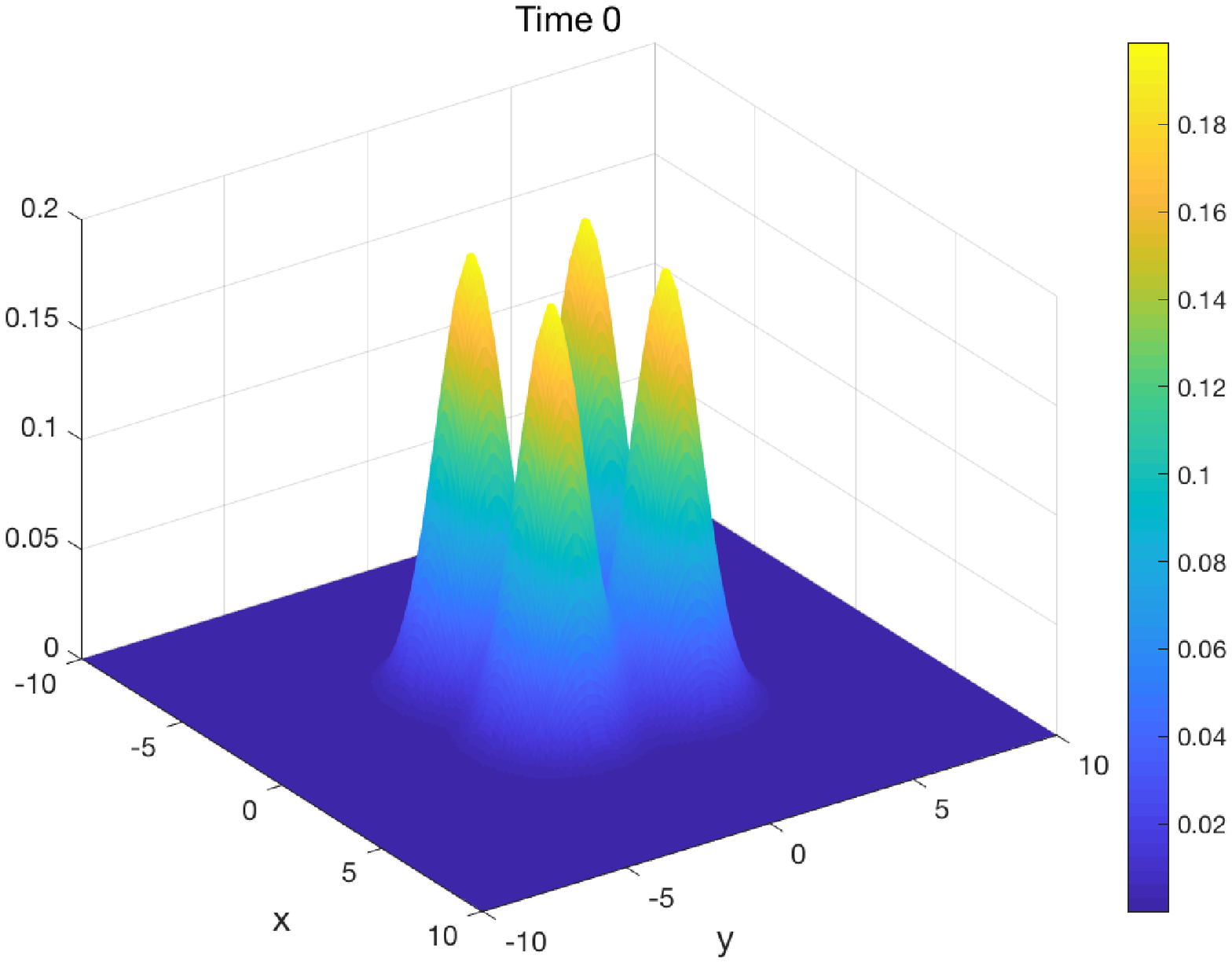}}
\subfigure{\includegraphics[height=6.25cm,width=6.25cm]{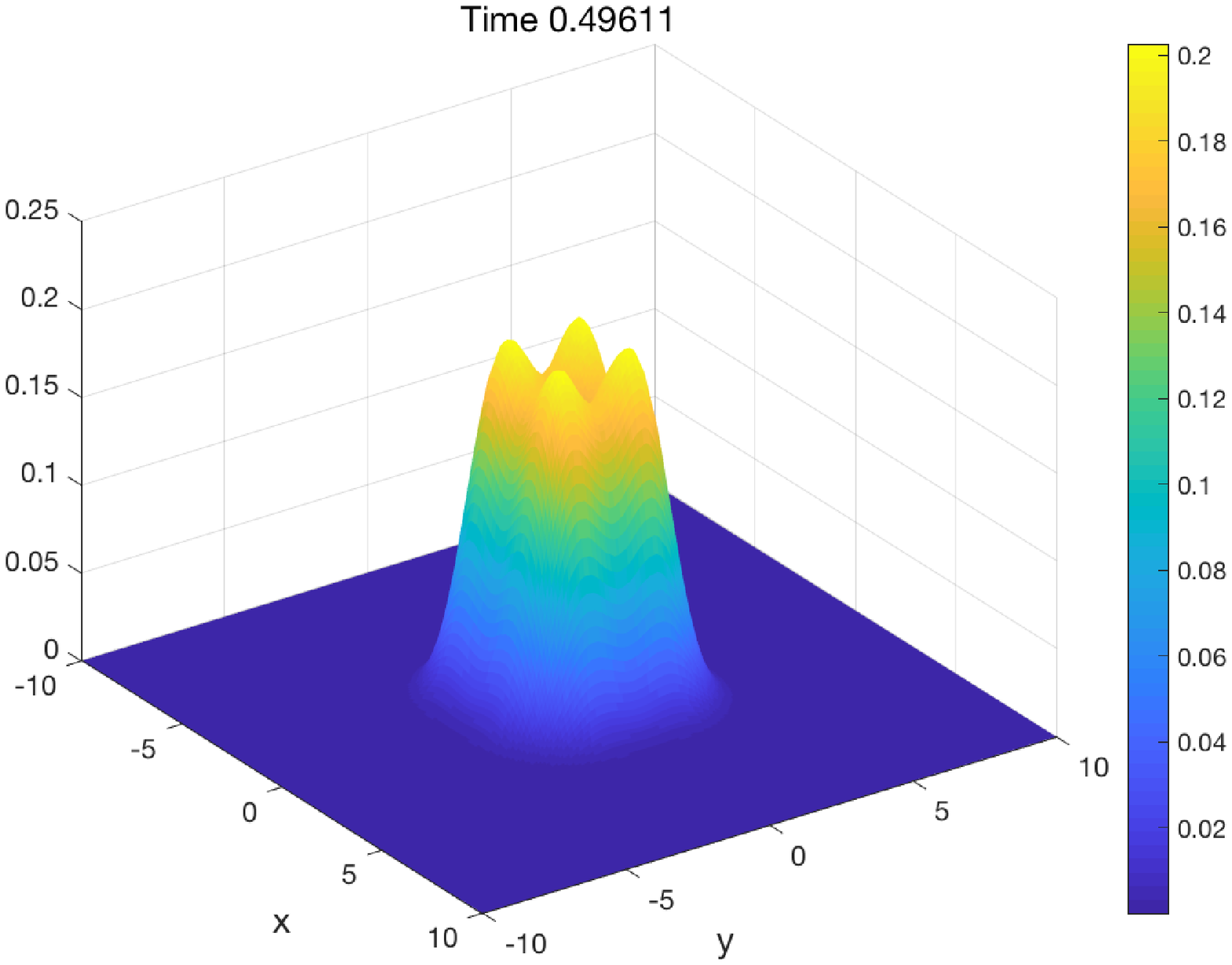}}
\subfigure{\includegraphics[height=6.25cm,width=6.25cm]{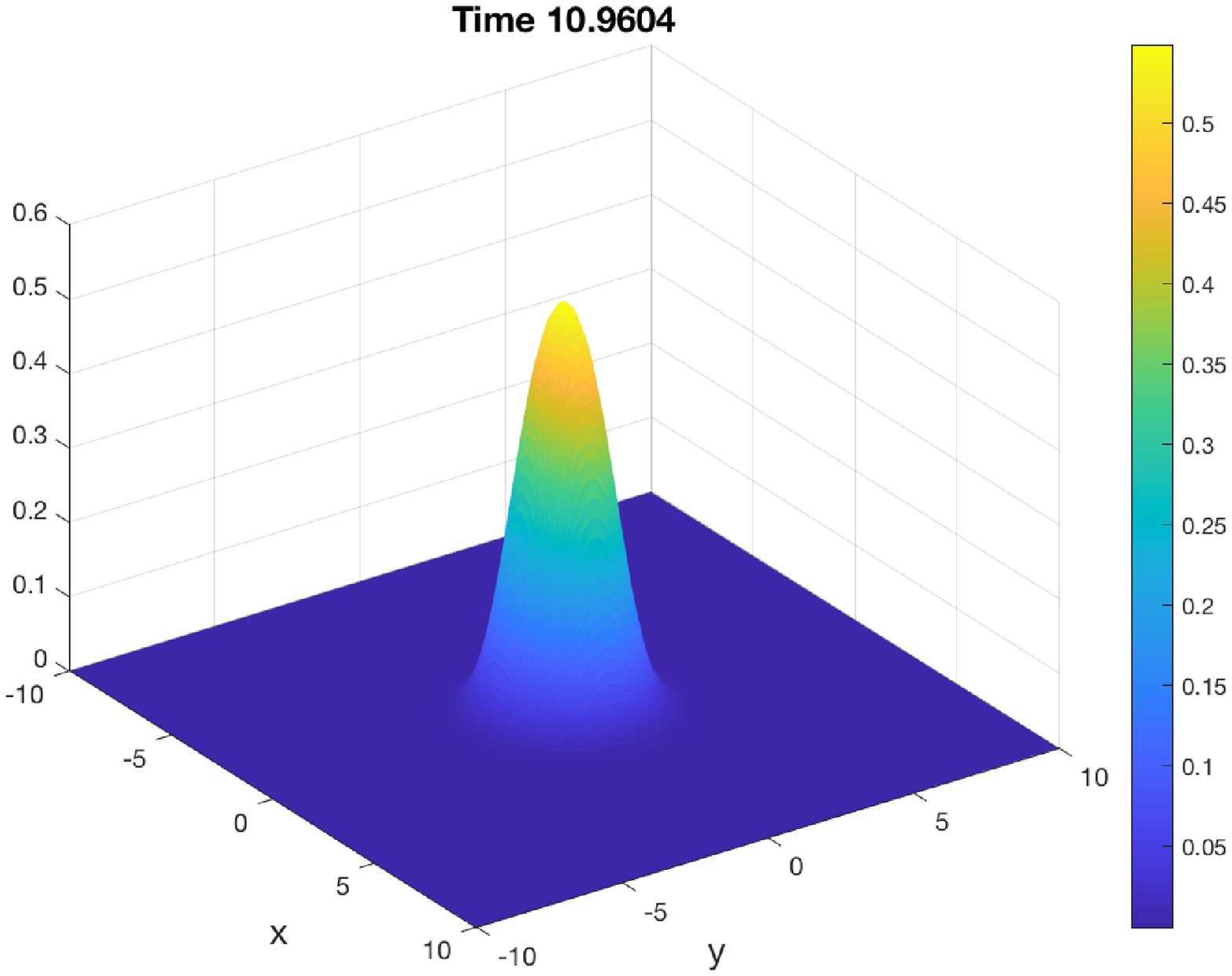}}
\subfigure{\includegraphics[height=6.25cm,width=6.25cm]{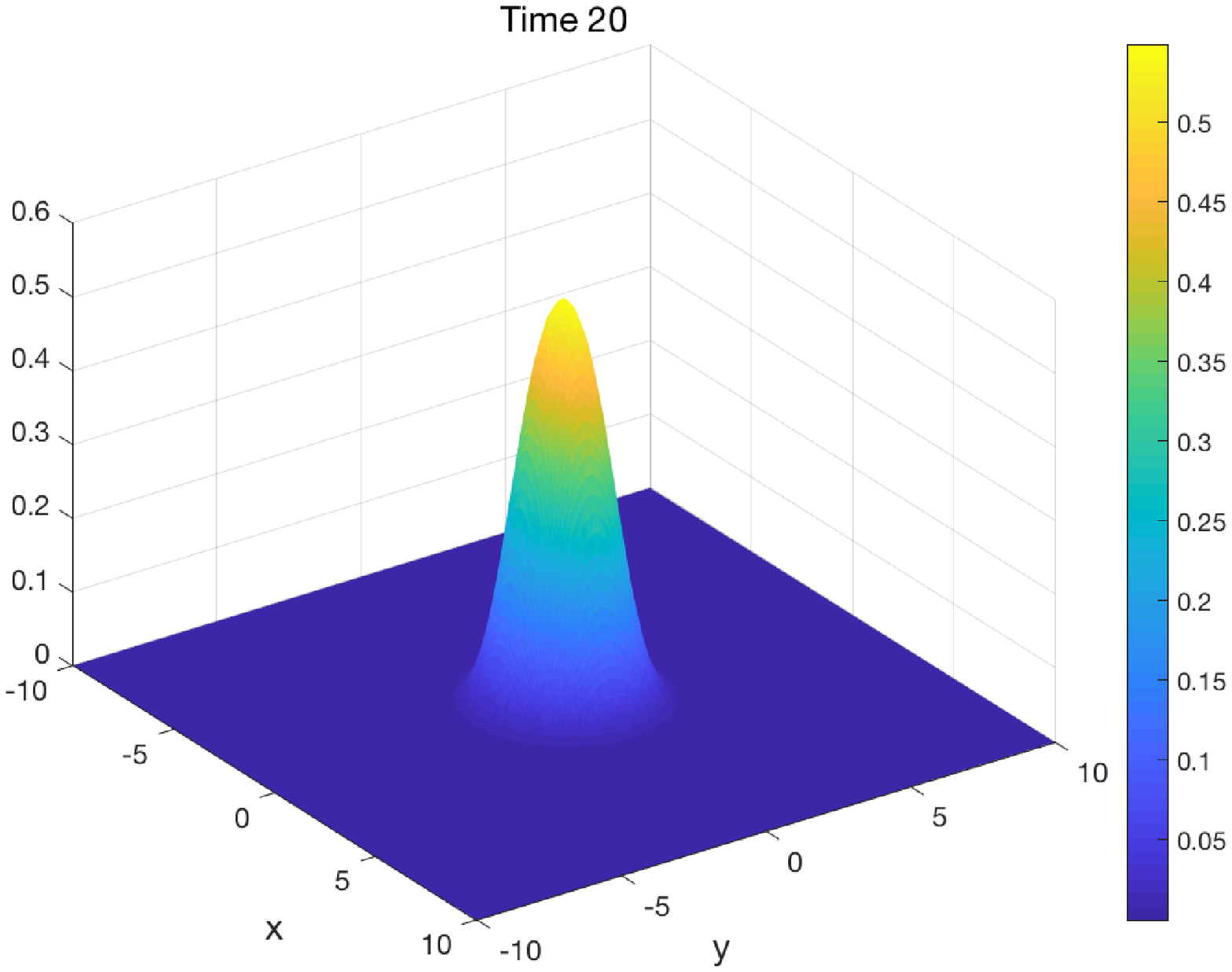}}
 \caption{\small\label{num13}(Example \ref{Fermion2D}) Numerical solution $U_h$ for $\mathcal{P}_{2}$ basis functions with KKT limiter enforced.}
\end{figure*}
\begin{figure*}[htbp]
\small
\centering
\subfigure{\includegraphics[height=6.3cm,width=6.5cm]{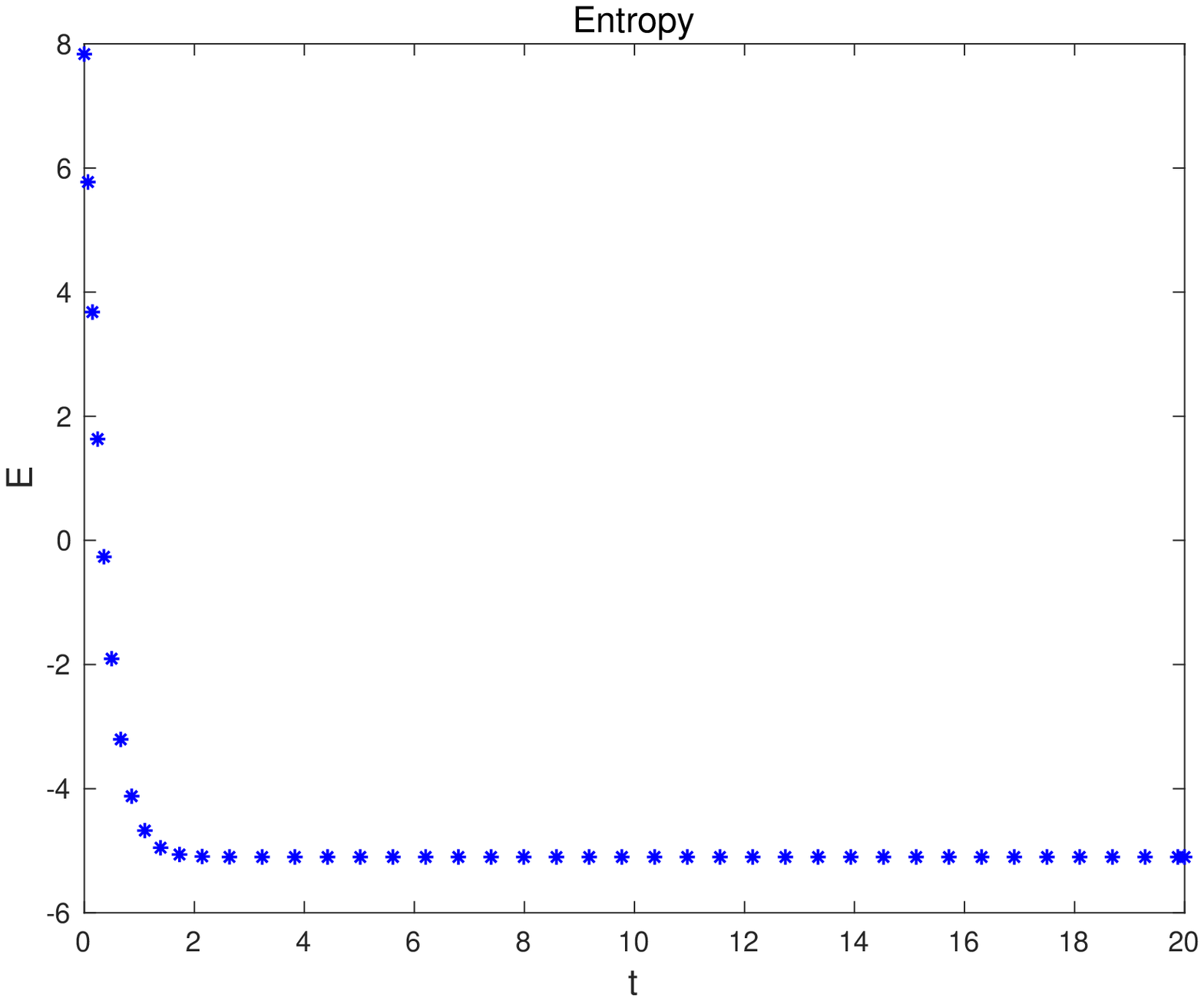}}
 \caption{\small\label{num13-1}(Example \ref{Fermion2D}) Entropy $E_h$ for $\mathcal{P}_{2}$ basis functions with KKT limiter enforced.}
\end{figure*}
\end{ex}

\subsection{Nonlinear Fokker-Plank equation for boson gases}\label{boson}
\begin{ex}\label{boson1}
We consider a nonlinear Fokker-Plank equation for boson gases  with zero-flux boundary condition on a domain $\Omega=[-10,10]$, which requires the following parameters in (\ref{1.a})
 \begin{align*}
 f(u)=u(1+u^3),\quad H'(u)=\log \frac{u}{(1+u^3)^{\frac13}} ,\quad \phi(x)=\frac{x^2}{2},\quad x\in\Omega.
 \end{align*}
The initial data is \cite{bessemoulin2012finite, liu2016entropy}
 \begin{align*}
\displaystyle u(x,0)=&\frac{M}{2\sqrt{2\pi}}\left(\exp\left({-\frac{(x-2)^2}{2}}\right)+\exp\left({-\frac{(x+2)^2}{2}}\right)\right),\quad x\in \Omega,
 \end{align*}
 where $M\geqslant 0$ is the mass of $u(x,0)$.

For most time steps, the value of $\alpha$  in the definition of the time step is 1. For the case $M=1$, Fig. \ref{num14-1} displays the numerical solution at various times. Also, the locations and values of the Lagrange multiplier $\lambda$ and the entropy with the KKT limiter enforced are shown. The results in Figs \ref{num14-1} and \ref{num14} indicate that the numerical solution tends to a steady state, and that the Lagrange multiplier $\lambda$ is needed to ensure that the positivity constraint is satisfied. Without the KKT limiter, the computations break down, even for very small CFL numbers.

For this model equation, there is a critical mass phenomenon \cite{abdallah2011minimization}, which states that solutions with a large initial mass blows-up in a finite time, while solutions with a small mass at an initial time will not. The numerical solutions with sub-critical mass $M=1$ at times $t=5$ and $t=10$ and with super-critical mass $M=10$ at times $t=0.2$ and $t=1$ are shown in Fig. \ref{num15} and Fig. \ref{num16}, respectively, and agree with the results shown in \cite{abdallah2011minimization} and the numerical observation in \cite{bessemoulin2012finite, liu2016entropy}.

\begin{figure*}[htbp]
\small
\centering
\subfigure{\includegraphics[height=6.25cm,width=6.25cm]{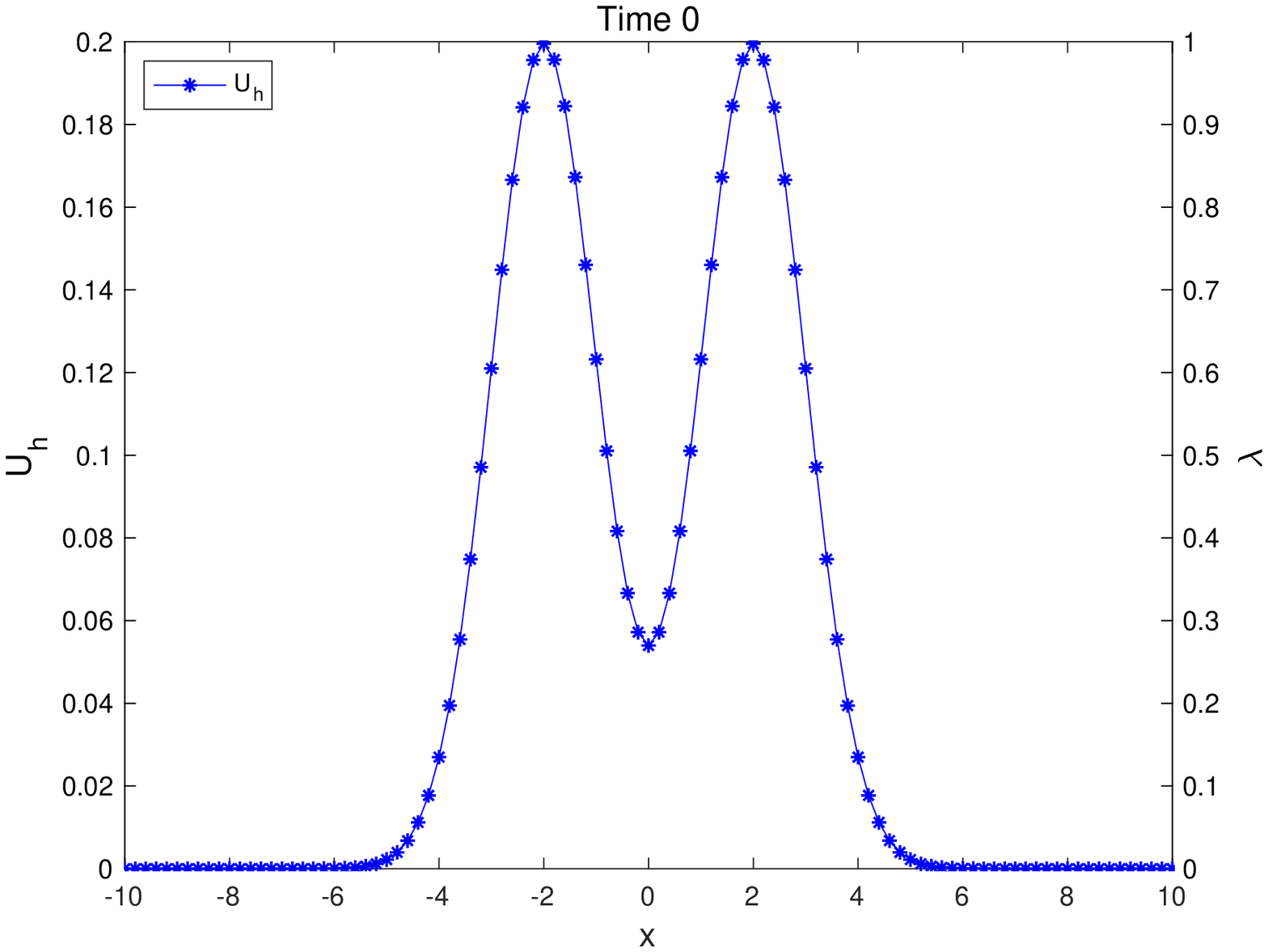}}
\subfigure{\includegraphics[height=6.25cm,width=6.25cm]{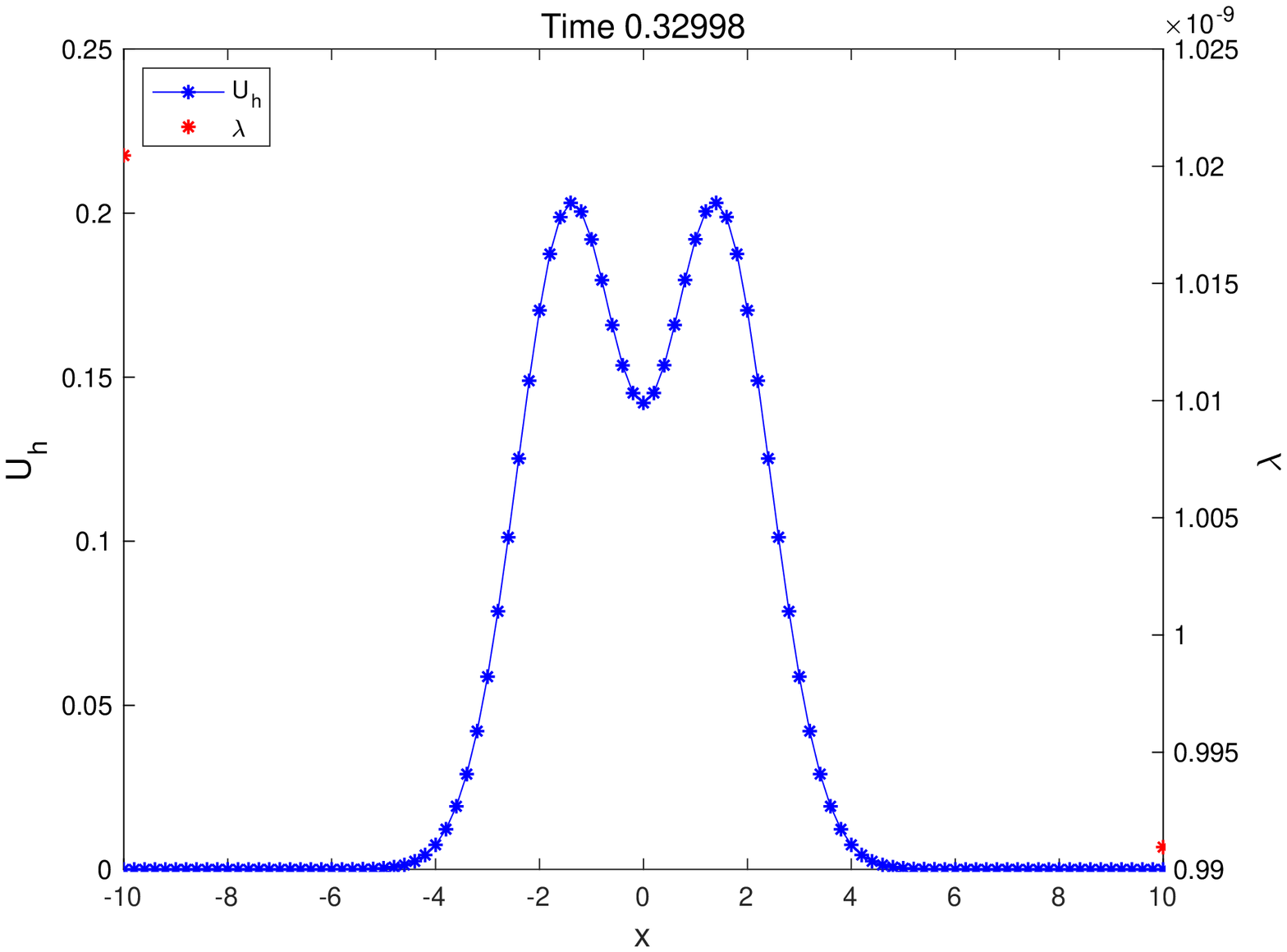}}
\subfigure{\includegraphics[height=6.25cm,width=6.25cm]{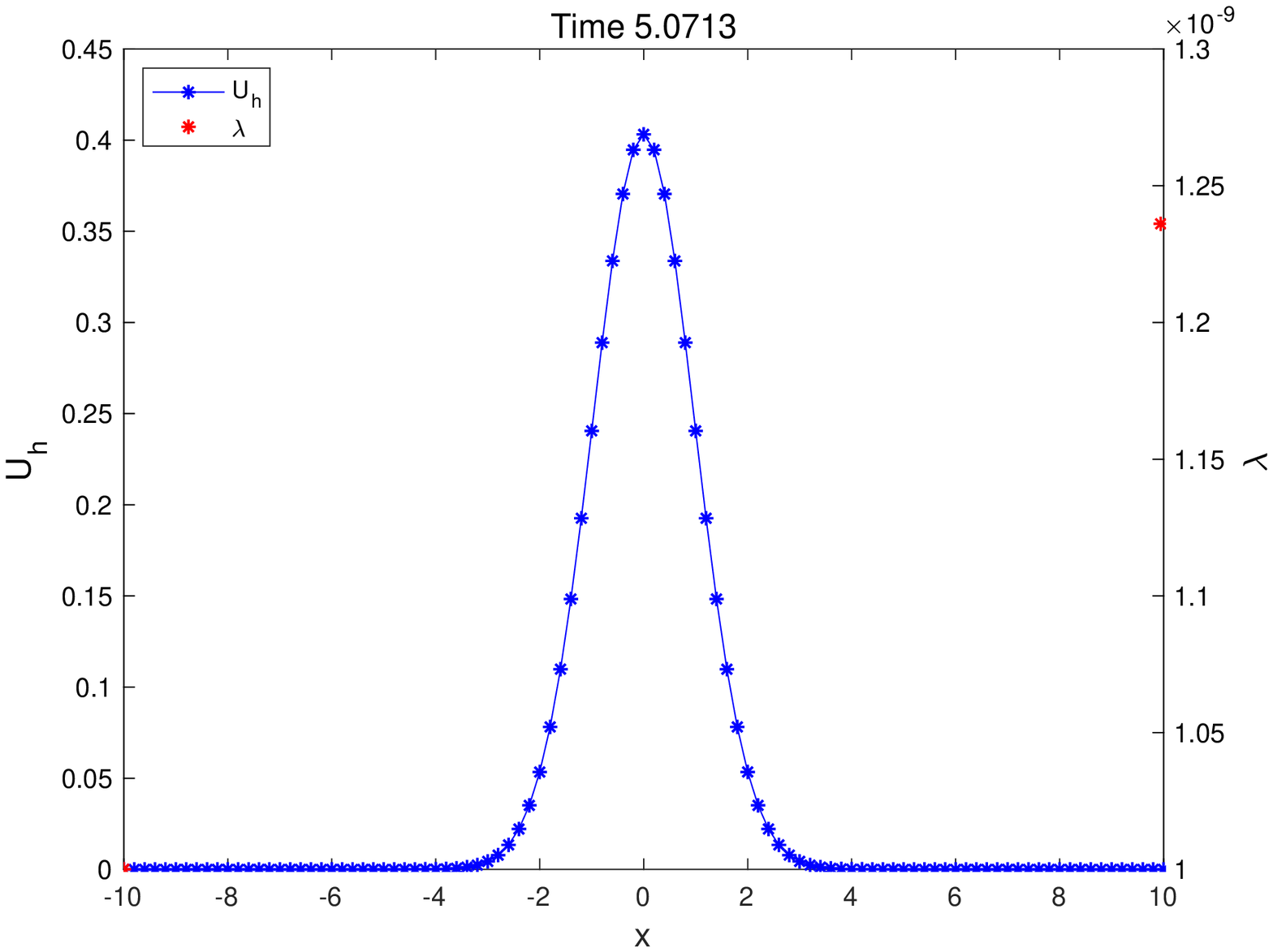}}
\subfigure{\includegraphics[height=6.25cm,width=6.25cm]{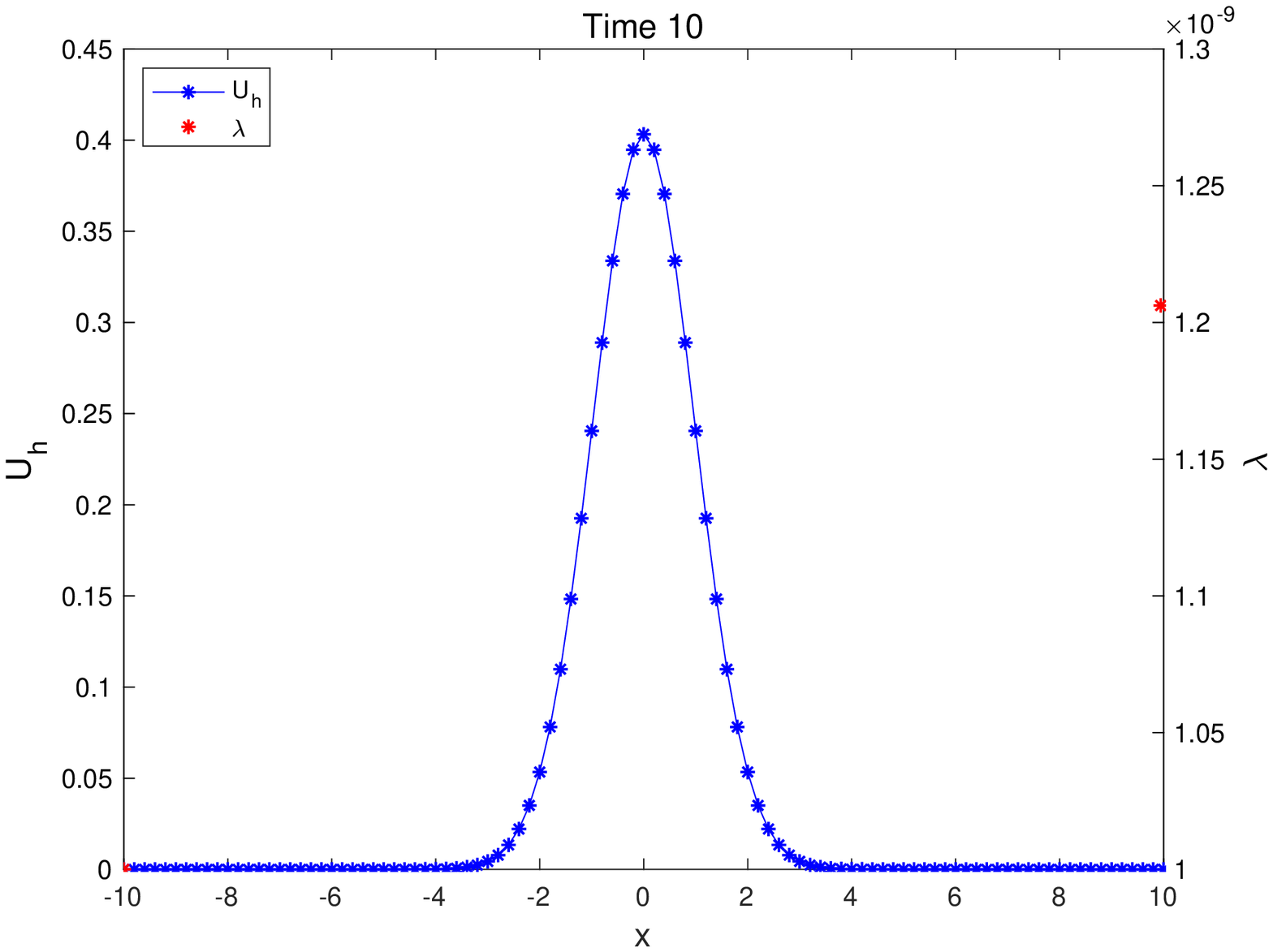}}
 \caption{\small\label{num14-1}(Example \ref{boson1}): Numerical solution $U_h$ for $\mathcal{P}_{2}$ basis functions with KKT limiter enforced.}
\end{figure*}

\begin{figure*}[htbp]
\small
\centering
\subfigure {\includegraphics[height=6.5cm,width=7.5cm]{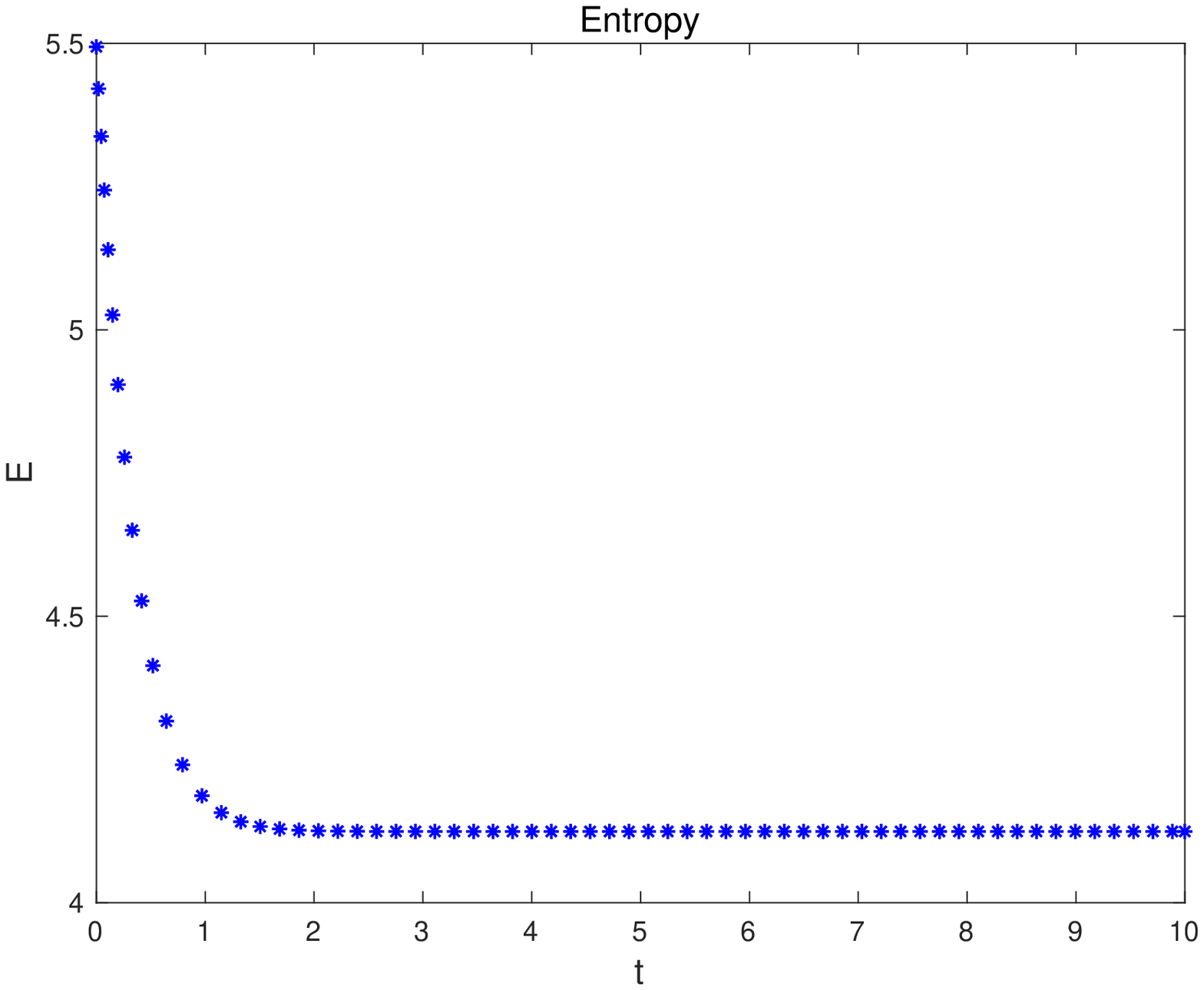}}
 \caption{\small\label{num14}(Example \ref{boson1}): Entropy $E_h$ for $\mathcal{P}_{2}$ basis functions with KKT limiter enforced.}
\end{figure*}

\begin{figure*}[htbp]
\small
\centering
\subfigure{\includegraphics[height=6.25cm,width=6.25cm]{Boson-gas-5.eps}}
\subfigure{\includegraphics[height=6.25cm,width=6.25cm]{Boson-gas-10.eps}}
 \caption{\small\label{num15}(Example \ref{boson1}: $M=1$): Numerical solution $U_h$ for $\mathcal{P}_{2}$ basis functions with KKT limiter enforced.}
\subfigure{\includegraphics[height=6.25cm,width=6.25cm]{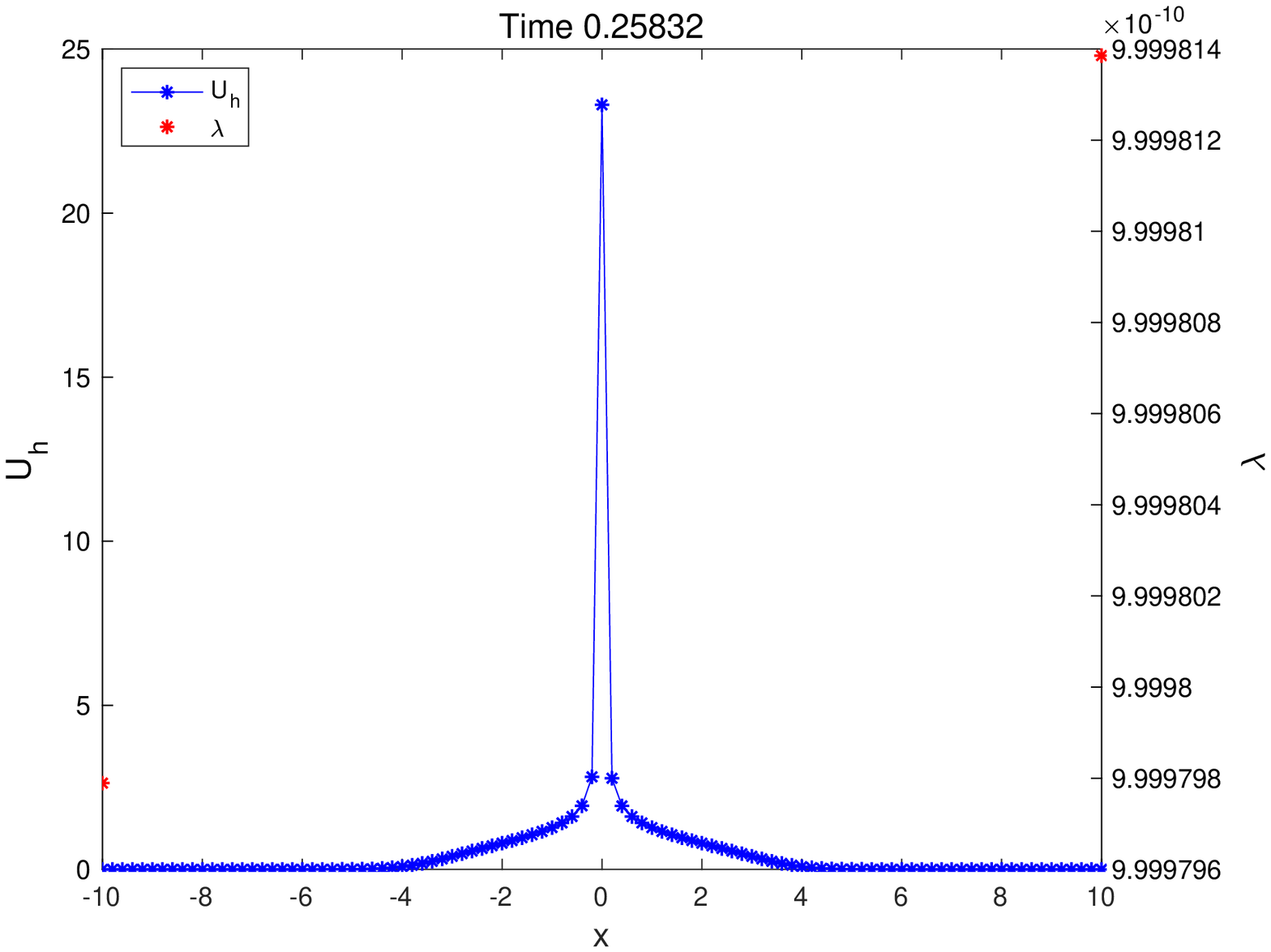}}
\subfigure {\includegraphics[height=6.25cm,width=6.25cm]{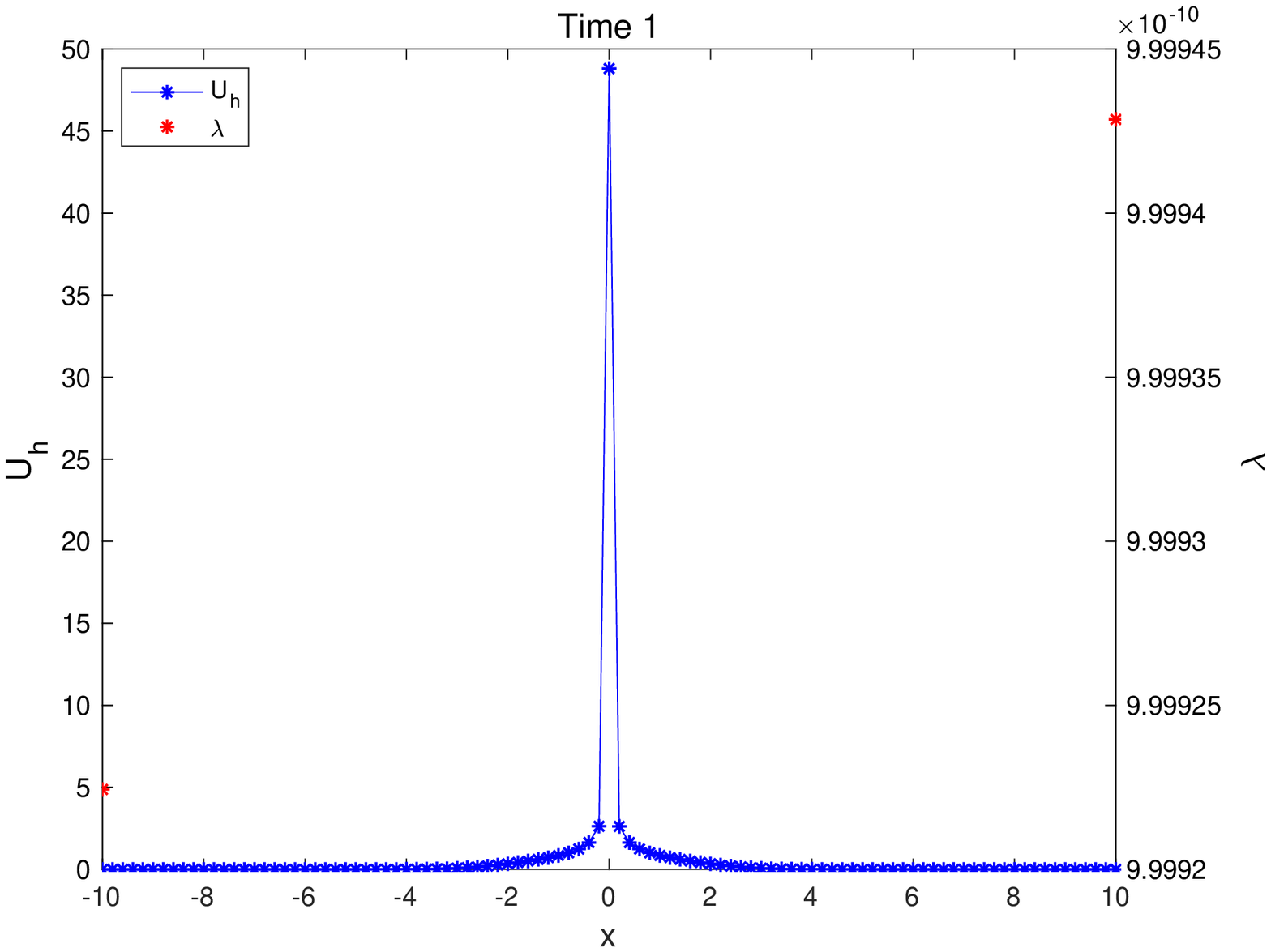}}
 \caption{\small\label{num16}(Example \ref{boson1}: $M=10$) Numerical solution $U_h$ for $\mathcal{P}_{2}$ basis functions with KKT limiter enforced.}
\end{figure*}
\end{ex}

\section{Conclusions}
\label{sec:conclusions}

The main topic of this paper is the formulation of higher order accurate positivity preserving DIRK-LDG discretizations for the nonlinear degenerate parabolic equation (\ref{1.a}). The presented numerical discretizations allow the combination of a positivity preserving limiter and time-implicit numerical discretizations for PDEs and alleviate the time step restrictions of currently available positivity preserving DG discretizations, which generally require the use of explicit time integration methods. For the spatial discretization an LDG method combined with a simple alternating numerical flux is used, which simplifies the theoretical analysis for the entropy dissipation. For the temporal discretization, the implicit DIRK methods significantly enlarge the time-step required for stability of the numerical discretization. We prove the existence, uniqueness and unconditional entropy dissipation of the positivity preserving high order accurate KKT-LDG discretization combined with an implicit Euler time discretization. Numerical results are presented to demonstrate the accuracy of the higher order accurate positivity preserving KKT-DIRK-LDG discretizations, which are of optimal order and not affected by the positivity preserving KKT limiter. The numerical solutions satisfy the entropy decay condition.

\section*{Acknowledgement}
\label{sec:acknowledgements}
The research of Fengna Yan was funded by a fellowship from the China Scholarship Council (No. 201806340058). The research of J.J.W. van der Vegt was partially supported by the University 
of Science and Technology of China (USTC), Hefei, Anhui, China, while the author was in residence at USTC.
The research of Yinhua Xia  was partially supported by National Natural Science Foundation of China grant No. 12271498. The research of Yan Xu was partially supported by National Natural Science Foundation of China grant No. 12071455.

\end{document}